\numberwithin{equation}{section}
\newtheorem*{Thmuncount}{Theorem}
\newtheorem{theorem}{Theorem}[section]
\newtheorem{proposition}[theorem]{Proposition}
\newtheorem{lemma}[theorem]{Lemma}
\newtheorem{corollary}[theorem]{Corollary}
\newtheorem{Definition}[theorem]{Definition}
\newenvironment{definition}{\begin{Definition}\rm}{\end{Definition}}
\newtheorem{Remark}[theorem]{Remark}
\newenvironment{remark}{\begin{Remark}\rm}{\end{Remark}}
\newtheorem{RHproblem}[theorem]{RH problem}
\newtheorem{Example}[theorem]{Example}
\theoremstyle{definition}
\newtheorem{question}{Question}
\newtheorem{fait}{Claim}
\theoremstyle{plain}
\newtheorem*{maintheo}{Theorem A}
\newtheorem*{theoA}{Theorem B}
\newtheorem*{theoC}{Theorem C}
\newcommand{\C}{\mathbb{C}}
\newcommand{\Z}{\mathbb{Z}}
\newcommand{\N}{\mathbb{N}}
\newcommand{\R}{\mathbb{R}}
\newcommand{\Q}{\mathbb{Q}}
\newcommand{\T}{\mathbb{T}}
\newcommand{\orb}{\text{Orb}}
\newcommand{\aorb}{\overline{\text{Orb}}}
\newcommand{\eorb}{\text{\emph{Orb}}}
\newcommand{\aeorb}{\overline{\text{\emph{Orb}}}}
\renewcommand{\bar}{\overline}
\renewcommand{\tilde}{\widetilde}
\begin{document}

\title[{$\Gamma$}-supercyclicity]{{$\Gamma$}-supercyclicity}
\author{S. Charpentier, R. Ernst, Q. Menet}
\address{St\'ephane Charpentier, 
Institut de Math\'ematiques, UMR 7373, Aix-Marseille Universit\'e, 39 rue F. Joliot Curie, 13453 Marseille Cedex 13, FRANCE}
\email{stephane.charpentier.1@univ-amu.fr}
\address{Romuald Ernst, 
Institut de Math\'ematiques, UMR 7373, Aix-Marseille Universit\'e, 39 rue F. Joliot Curie, 13453 Marseille Cedex 13, FRANCE}
\email{ernst.r@math.cnrs.fr}
\address{Quentin Menet, 
Laboratoire de Math\'ematiques de Lens, Universit\'e d'Artois, Rue Jean Souvraz S.P. 18 , 62307 Lens, FRANCE}
\email{quentin.menet@univ-artois.fr}
\keywords{Hypercyclicity, supercyclicity}
\subjclass[2010]{47A16}
\begin{abstract}
We characterize the subsets $\Gamma$ of $\C$ for which the notion of $\Gamma$-supercyclicity coincides with the notion of hypercyclicity, where an operator $T$ on a Banach space $X$ is said to be $\Gamma$-supercyclic if there exists $x\in X$ such that $\overline{\text{Orb}}(\Gamma x, T)=X$. In addition we characterize the sets $\Gamma \subset \C$ for which, for every operator $T$ on $X$, $T$ is hypercyclic if and only if there exists a vector $x\in X$ such that the set $\text{Orb}(\Gamma x, T)$ is somewhere dense in $X$. This extends results by Le\'on-M\"uller and Bourdon-Feldman respectively. We are also interested in the description of those sets $\Gamma \subset \C$ for which $\Gamma$-supercyclicity is equivalent to supercyclicity.\end{abstract}

\maketitle

\section{Introduction and statements of the main results}

\subsection{Introduction}

Let $X$ be a complex Banach space and let $L(X)$ denote the space of bounded linear operators on $X$. For $T$ in $L(X)$, $x$ in $X$, and $\Gamma$ a non-empty subset of the complex plane $\C$, we denote $\text{Orb}(\Gamma x,T)=\left\{ \gamma T^n x:\,\gamma \in \Gamma,\, n\geq 0\right\}$. We say that $x$ is \emph{$\Gamma$-supercyclic} for $T$ if $\text{Orb}(\Gamma x,T)$ is dense in $X$ and $T$ will be said to be $\Gamma$-supercyclic if it admits a $\Gamma$-supercyclic vector. In particular, if $\Gamma=\mathbb{C}$, $x$ $\Gamma$-supercyclic for $T$ reads $x$ supercyclic for $T$ and if $\Gamma$ reduces to a single nonzero point, $x$ $\Gamma$-supercyclic for $T$ reads $x$ hypercyclic for $T$.
The notion of hypercyclicity was already studied by Birkhoff in the twenties but it really began to attract much attention in the late seventies. The terminology follows that of supercyclicity, introduced by Hilden and Wallen \cite{Hilwal} in the early seventies, and the former notion of cyclicity. While the latter is directly connected with the well-known Invariant Subspace Problem, hypercyclicity is connected with the Invariant \emph{Subset} Problem. To learn much about linear dynamics, we refer to the very nice books \cite{Bay,Grope}.

One of the first important results was Kitai Criterion \cite{Kitai}, refined by B\`es \cite{Bes} in the following form and known as the Hypercyclicity Criterion.

\begin{Thmuncount}[\textbf{Hypercyclicity Criterion}]Let $T\in L(X)$. We assume that there exist two dense subsets $X_0,Y_0\subset X$, an increasing sequence $(n_k)_k\subset \mathbb{N}$, and maps $S_{n_k}:Y_0\rightarrow X$ such that for any $x\in X_0$ and $y\in Y_0$ the following holds:
\begin{enumerate}\item $T^{n_k}x\rightarrow 0$ as $k\rightarrow \infty$;
\item $S_{n_k}y\rightarrow 0$ as $k\rightarrow \infty$;
\item $T^{n_k}S_{n_k}y\rightarrow y$ as $k\rightarrow \infty$.
\end{enumerate}
Then $T$ is hypercyclic.
\end{Thmuncount}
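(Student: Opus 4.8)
The natural route is the classical one: show that the hypotheses force $T$ to be topologically transitive, and then invoke Baire's theorem. Assuming, as one may, that $X$ is separable (a Banach space carrying a hypercyclic operator is necessarily separable, so nothing is lost), fix a countable base $(B_j)_{j\ge 1}$ of non-empty open subsets of $X$. The set of hypercyclic vectors of $T$ equals $\bigcap_{j\ge 1}\bigcup_{n\ge 0}T^{-n}(B_j)$, and since $T$ is continuous each $\bigcup_{n\ge 0}T^{-n}(B_j)$ is open; so by Baire's theorem it suffices to prove that every such set is dense, i.e. that for every non-empty open $U\subset X$ and every $j$ there is an $n\ge 0$ with $T^n(U)\cap B_j\neq\emptyset$.

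The heart of the argument is this transitivity property, and this is where assumptions (1)--(3) enter. Given non-empty open sets $U,V\subset X$, the density of $X_0$ and $Y_0$ lets me pick $x\in X_0\cap U$ and $y\in Y_0\cap V$, and I then consider
\[
z_k:=x+S_{n_k}y,\qquad k\ge 1.
\]
By (2), $S_{n_k}y\to 0$, hence $z_k\to x$, so $z_k\in U$ for all large $k$ since $U$ is open. By linearity of $T^{n_k}$ we have $T^{n_k}z_k=T^{n_k}x+T^{n_k}S_{n_k}y$, and (1) and (3) give $T^{n_k}x\to 0$ and $T^{n_k}S_{n_k}y\to y$; hence $T^{n_k}z_k\to y$, so $T^{n_k}z_k\in V$ for all large $k$. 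Taking $k$ large enough that both inclusions hold gives $z_k\in U\cap T^{-n_k}(V)$, that is $T^{n_k}(U)\cap V\neq\emptyset$, which is even stronger than what is required.

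Putting the two steps together, each $\bigcup_{n\ge 0}T^{-n}(B_j)$ is a dense open subset of $X$, so $\bigcap_{j\ge 1}\bigcup_{n\ge 0}T^{-n}(B_j)$ is a dense $G_\delta$ set; in particular it is non-empty, and $T$ is hypercyclic. I do not foresee any real difficulty here: the only delicate points are the order of the quantifiers --- the vectors $x\in X_0$ and $y\in Y_0$ must be fixed before one lets $k\to\infty$ --- and the harmless fact that the maps $S_{n_k}$ need be neither linear, nor continuous, nor defined outside $Y_0$, since each one is evaluated solely at the single fixed point $y\in Y_0$ and then composed with the bounded linear operators $T^{n_k}$.
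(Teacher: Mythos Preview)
Your argument is the standard and correct proof: density of $X_0,Y_0$ together with conditions (1)--(3) yield topological transitivity along the sequence $(n_k)$, and Baire's theorem then produces a hypercyclic vector. The paper does not supply its own proof of this statement---it is quoted in the introduction as a classical background result attributed to Kitai and B\`es---so there is nothing to compare against; your proof is exactly what one finds in the standard references.
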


We mention that there also exists a so-called Supercyclicity Criterion due to Salas \cite{Salas}, which is easily seen to be non-necessary for supercyclicity. The Hypercyclicity Criterion gives an effective way of proving that an operator is hypercyclic and covers a wide range of \emph{concrete} hypercyclic operators, allowing to directly recover historical examples of hypercyclic operators exhibited by Birkhoff \cite{Birk}, MacLane \cite{MacLane} or Rolewicz \cite{Rolewicz}. A long-standing major open question was to know whether the Hypercyclicity Criterion is necessary for an operator to be hypercyclic. B\`es and Peris \cite{Besper} observed that satisfying the Hypercyclicity Criterion is in fact equivalent to being hereditarily hypercyclic or weakly mixing and, in 2008, De La Rosa and Read built a Banach space and a non-weakly mixing hypercyclic operator acting on this space, giving a negative answer to the above mentioned question. A bit later, Bayart and Matheron \cite{Baymathyppb} provided such an 
example on many classical Banach spaces, including the separable Hilbert space. We also refer to \cite{Berbonpe} where the authors show equivalence between the Hypercyclicity Criterion and other criteria, and unify different versions of the Supercyclicity Criterion.

Anyway the existence of effective characterizations of hypercyclicity and supercyclicity has been a wide subject of interest and several results have been given. In this direction, Herrero \cite{Herrero} conjectured that an operator $T$ needs to be hypercyclic if we only assume that the orbit under $T$ of some finite set of vectors is dense in $X$. In 2000, Peris \cite{Permultihyp} and Costakis \cite{Costakisconjherrero} independently gave a positive answer to Herrero's conjecture. In 2004 Le\'on and M\"uller proved another result in the same spirit \cite[Corollary 2]{LeonMuller}.
\begin{Thmuncount}[\textbf{Le\'on-M\"uller Theorem}]\label{LM}
Let $T\in L(X)$. Then $x\in X$ is hypercyclic for $T$ if and only if $x$ is $\T$-supercyclic for $T$.
\end{Thmuncount}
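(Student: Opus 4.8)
The implication that a hypercyclic vector is $\T$-supercyclic is immediate: $\text{Orb}(x,T)\subseteq\text{Orb}(\T x,T)$, so density of the former forces density of the latter. Hence the whole content lies in the converse: if $\text{Orb}(\T x,T)$ is dense then so is $\text{Orb}(x,T)$. We may assume $X$ is infinite-dimensional — a nonzero finite-dimensional space carries no $\T$-supercyclic operator, since there $\text{Orb}(\T x,T)$ is a countable union of circles and hence has empty interior — and it is then automatically separable.

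The plan is to first exhibit \emph{some} unimodular $\lambda_0$ for which $x$ is genuinely hypercyclic for $\lambda_0 T$, and then transport this to $T$ itself via the invariance of the hypercyclic vectors under rotations of the operator. For the first step I fix a countable base $(V_j)_j$ of nonempty open subsets of $X$ and set $G_j=\{\lambda\in\T:\ \exists\, n\geq 0,\ (\lambda T)^n x=\lambda^n T^n x\in V_j\}$. Each $G_j$ is open in $\T$, being a union over $n$ of preimages of $V_j$ under the continuous maps $\lambda\mapsto\lambda^n T^n x$. The crucial claim is that each $G_j$ is \emph{dense}. To see this I would first note that for any $N$ the set $\{\mu T^n x:\mu\in\T,\ 0\leq n<N\}$ is compact (a finite union of continuous images of $\T$) and therefore, $X$ being infinite-dimensional, has empty interior; since $\text{Orb}(\T x,T)$ is dense, it must meet $V_j$ outside this set, which produces $n\geq N$ and $\mu\in\T$ with $\mu T^n x\in V_j$. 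Given now a short open arc $I\subseteq\T$ of angular length $\delta$, choose such a pair with $n>2\pi/\delta$; the $n$ distinct $n$-th roots of $\mu$ are spaced $2\pi/n<\delta$ apart on $\T$, so one of them, $\lambda$, lies in $I$, and then $(\lambda T)^n x=\lambda^n T^n x=\mu T^n x\in V_j$, i.e. $\lambda\in G_j\cap I$. By Baire's theorem in the compact metric space $\T$, $\bigcap_j G_j$ is a dense $G_\delta$, hence nonempty, and any $\lambda_0$ in it makes $x$ hypercyclic for $\lambda_0 T$.

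It remains to pass from $\lambda_0 T$ back to $T$. For this I would invoke the Le\'on-Saavedra--M\"uller rotation theorem, namely that for every $S\in L(X)$ and every $\mu\in\T$ the operators $S$ and $\mu S$ have exactly the same hypercyclic vectors; applied to $S=\lambda_0 T$ with $\mu=\lambda_0^{-1}$ it yields that $x$ is hypercyclic for $T$, as desired. (Alternatively one can work directly with $\Lambda=\{\mu\in\T:\mu x\in\overline{\text{Orb}}(x,T)\}$: one checks that $\mu\in\Lambda$ implies $\mu\cdot\overline{\text{Orb}}(x,T)\subseteq\overline{\text{Orb}}(x,T)$, and, using that $\overline{\text{Orb}}(\T x,T)=\T\cdot\overline{\text{Orb}}(x,T)=X$ by compactness of $\T$, one deduces $\overline{\text{Orb}}(x,T)=X$ as soon as $\Lambda$ is infinite; but excluding the possibility that $\Lambda$ is a finite cyclic group seems to require essentially the same rotational phenomenon.)

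I expect the rotation step to be the main obstacle. The Baire reduction above is essentially soft, and its only delicate point is the density of the $G_j$, where infinite-dimensionality is used to discard bounded powers and the elementary fact that the $n$-th roots of a point become $\delta$-dense on $\T$ once $n>2\pi/\delta$ does the rest. By contrast, the statement that a unimodular rotation of an operator preserves its hypercyclic vectors is a genuine theorem that must exploit linearity: one cannot hope to prove it by making the product map $T\oplus(z\mapsto\lambda_0 z)$ on $X\oplus\C$ transitive on $X\times\T$, since that would force $T$ to be weakly mixing, which hypercyclic operators need not be (De La Rosa--Read, Bayart--Matheron).
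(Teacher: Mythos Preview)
The paper does not supply its own proof of the Le\'on--M\"uller Theorem: it is quoted in the introduction as \cite[Corollary~2]{LeonMuller} and then used as a black box in the proof of Theorem~A (it is invoked twice inside the proof of Proposition~\ref{prop1-thm1}). There is therefore no in-paper argument to compare your proposal against.

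On its own merits, your Baire reduction is correct: each $G_j$ is open, the density argument via $n$-th roots works exactly as you describe, and this delivers a $\lambda_0\in\T$ for which $x$ is hypercyclic for $\lambda_0 T$. The final transfer step, however, invokes the rotation-invariance theorem, and here you should be careful: the paper explicitly presents rotation invariance as a \emph{consequence} of the very statement you are proving (see the sentence immediately after the theorem). The two results are in fact equivalent --- your Baire argument is precisely one direction of that equivalence, and the derivation of rotation from $\T$-supercyclicity (via $\orb(\T x,T)=\orb(\T x,\lambda T)$) is the other --- so appealing to one in order to establish the other only relocates the difficulty. You recognise this yourself in your last paragraph, and indeed both your main route and your parenthetical alternative via $\Lambda=\{\mu\in\T:\mu x\in\aorb(x,T)\}$ stop exactly where the genuine content begins: the group-theoretic argument (originating with Le\'on--M\"uller, later abstracted by Shkarin and by Bayart--Matheron) that rules out a nontrivial finite rotational obstruction.
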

Here, it is remarkable that we can replace the orbit of a single vector by the orbit of an uncountable set of vectors. Nevertheless it is worth noting that this uncountable set of vectors is one dimensional and that a specific group structure is underlying. We mention that, as a corollary of this theorem, Le\'on and M\"uller proved that for any complex number $\lambda$ with modulus $1$, $T$ is hypercyclic if and only if $\lambda T$ is hypercyclic (with same hypercyclic vectors), answering another question posed by Herrero \cite{Herrero}.

Roughly speaking the two previous results refer to the general problem of how \emph{big} can be a set with dense orbit in order to still ensure hypercyclicity. So, in the context of Le\'on-M\"uller Theorem the following question arises.

\begin{question}
Is it possible to characterize the sets $\Gamma\subset \C$ such that $T$ is hypercyclic if and only if $T$ is $\Gamma$-supercyclic? 
\end{question}

The proof of Le\'on-M\"uller Theorem heavily relies on the group (or rather semigroup) structure of $\T$ with respect to the complex multiplication. This group-theoretic approach has been deeply developed by Shkarin \cite{Shkauniv} and Matheron \cite{Matheronsub} (see also \cite[Chapter 3]{Bay}) in a much more general and abstract setting, tending to suggest that it is inevitable. 

\medskip{}

A different approach for characterizing hypercyclicity in an (apparently) weaker way may consist in considering how \emph{small} can be the orbit of a given vector under $T$ to still ensure that $T$ is hypercyclic. The first result in this direction is obtained by Feldman \cite{Feldmanperturbations} in 2002 who proved that an operator is hypercyclic if and only if there exists $d>0$ and a vector $x\in X$ having a $d$-dense orbit, where a set is said to be $d$-dense if it intersects any open ball of radius $d$. Moreover, Feldman also proved that a vector with $d$-dense orbit is not necessarily a hypercyclic vector. In the same year Bourdon and Feldman \cite{Boufel} proved a very nice result:
\begin{Thmuncount}[\textbf{Bourdon-Feldman Theorem}]
Let $X$ be a Banach space, $T\in L(X)$ and $x\in X$. Then $x$ is hypercyclic for $T$ if and only if $\eorb(x,T)$ is somewhere dense in $X$.
\end{Thmuncount}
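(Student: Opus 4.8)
The plan is to establish the non-trivial implication: if $\orb(x,T)$ is somewhere dense, then it is dense in $X$ (the converse being immediate, since $X$ itself is somewhere dense). Write $K:=\overline{\orb(x,T)}$; then $K$ is closed, $T(K)\subseteq K$, and $\Omega:=\operatorname{int}(K)\neq\emptyset$ by hypothesis, so the goal becomes $K=X$. A preliminary reduction: a finite set is never dense in a nonempty open subset of a Banach space, so $\orb(x,T)$ meets every ball contained in $\Omega$ infinitely often; hence each tail $\{T^{n}x:\ n\geq N\}$ is again dense in $\Omega$, and its closure again has $\Omega$ inside its interior. Picking $N$ with $T^{N}x\in\Omega$ and replacing $x$ by $T^{N}x$, we may therefore assume $x\in\Omega$; fix $r>0$ with $\overline{B(x,r)}\subseteq\Omega$.

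The strategy is then to prove that $K$ is \emph{open}: since a complex Banach space is connected and $K\neq\emptyset$, this at once forces $K=X$ and finishes the proof. So suppose, for a contradiction, that $K\neq X$, fix $w\notin K$ (note $w\neq x$), and look at the segment $t\mapsto(1-t)x+tw$: it begins in the open set $\Omega\subseteq K$ and ends outside the closed set $K$, so it leaves $K$ at a last parameter $t^{*}\in(0,1)$, and $p:=(1-t^{*})x+t^{*}w$ lies in $\partial K=K\setminus\Omega$. Since $K=\overline{\orb(x,T)}$, choose $n_{j}\to\infty$ with $T^{n_{j}}x\to p$. From $\overline{B(x,r)}\subseteq K$ and $T(K)\subseteq K$ we get $T^{n_{j}}x+T^{n_{j}}(\overline{B(0,r)})\subseteq K$ for every $j$; letting $j\to\infty$, using $T^{n_{j}}x\to p$ and the closedness of $K$, we obtain that $p+\zeta\in K$ for every vector $\zeta$ arising as a subsequential limit of vectors $T^{n_{j}}u_{j}$ with $u_{j}\in\overline{B(0,r)}$. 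To reach the contradiction it thus suffices to produce one such $\zeta$ of the form $\zeta=\varepsilon(w-x)$ with $\varepsilon>0$ and $t^{*}+\varepsilon\leq1$: then $p+\varepsilon(w-x)=(1-(t^{*}+\varepsilon))x+(t^{*}+\varepsilon)w\in K$, contradicting the maximality of $t^{*}$.

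Producing such a $\zeta$ is the technical heart, and the step I expect to be the main obstacle: it is precisely the assertion that the operators $T^{n_{j}}$ cannot collapse the fixed ball $\overline{B(x,r)}$ onto (a neighbourhood of) the single point $p$. This cannot follow from soft estimates — a bounded operator may contract a ball arbitrarily much — so linearity and the density of the orbit in the \emph{open} set $\Omega$ must enter essentially (this is exactly why the theorem genuinely fails for nonlinear continuous maps). The mechanism I would use: because $\orb(x,T)$ is dense in $\Omega\supseteq\overline{B(x,r)}$, one can, for suitably chosen target vectors $x+v\in B(x,r)$, select times $m_{i}$ with $T^{m_{i}}x\to x+v$; composing with the times $n_{j}$ — legitimate because $B(x,r)\subseteq K$ is carried into $K$ by every power of $T$, so $T^{n_{j}}(x+v)\in K$ — and then passing to the limit $j\to\infty$ transports the full local richness of the orbit near $x$ to richness near $p$; by carefully tracking the non-uniformity of the maps $T^{n_{j}}$ and extracting a subsequence along which the relevant images do not degenerate (this step needs extra care, e.g.\ a judicious choice of the boundary point $p$ and control of $\|T^{n_{j}}\|$), one extracts the required $\zeta=\varepsilon(w-x)$. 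Granting this ``no-collapse'' lemma, the proof closes as above: $K$ is open, hence clopen, hence $K=X$, so $\orb(x,T)$ is dense and $x$ is hypercyclic for $T$.
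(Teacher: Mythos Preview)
A point of context first: the paper does not prove the Bourdon--Feldman Theorem. It is quoted in the introduction as a known result from \cite{Boufel} and then invoked (in its generalized form \cite[Theorem~3.13]{Bay}) as a black box in Section~4. There is therefore no proof in the paper against which to compare your attempt.

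Turning to your proposal on its own merits: there is a genuine gap at precisely the step you yourself flag as the ``main obstacle''. The ``no-collapse'' lemma is never established, and the mechanism you sketch does not produce it. You want a subsequential limit of $T^{n_j}u_j$ (with $\|u_j\|\le r$) equal to $\varepsilon(w-x)$ for some $\varepsilon>0$; but from $T^{n_j}(\overline{B(x,r)})\subset K$ you only learn that every accumulation point of such sequences lies in $K-p$, with no control whatsoever on its \emph{direction}. Your suggested repair --- pick $m_i$ with $T^{m_i}x\to x+v$ and compose --- yields, after letting $i\to\infty$ first, $T^{n_j}x+T^{n_j}v\in K$; you still have no handle on $T^{n_j}v$ as $j\to\infty$, and certainly no reason for any limit point to fall on the half-line $\{\varepsilon(w-x):\varepsilon>0\}$. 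The appeal to ``extracting a subsequence along which the relevant images do not degenerate'' and ``a judicious choice of the boundary point $p$'' is exactly where the entire difficulty of the theorem lives, and nothing in your outline addresses it.

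For comparison, the actual Bourdon--Feldman argument is algebraic rather than geometric: one first shows $\sigma_p(T^*)=\emptyset$, so that $p(T)$ has dense range for every nonzero polynomial $p$ (cf.\ Lemma~\ref{lemma-spectre-vide} of the present paper for the analogous step), and then exploits the interaction of the $T$-invariant open complement $X\setminus\aorb(x,T)$ with the dense subspace $\{p(T)x:p\in\C[z]\}$. The polynomial structure is what substitutes for your missing ``no-collapse'' step; a purely topological segment-and-limit argument of the kind you propose does not close.
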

We remark that Peris and Costakis' result is a corollary of the latter. Later on, Bayart and Matheron \cite{Bay} used the group-theoretic approach initiated by Le\'on and M\"uller to extend Bourdon-Feldman Theorem to a general framework involving semigroups. We just quote their result in a peculiar case, that is $\mathcal{T}=\{\lambda T^n;\,\lambda \in \T,n\in\N\}$ using their notations.

\begin{Thmuncount}[Theorem 3.13 of \cite{Bay}]
Let $X$ be a complex Banach space, $T\in L(X)$ and $x\in X$. Then $x$ is hypercyclic for $T$ if and only if $\eorb(\T x,T)$ is somewhere dense in $X$.
\end{Thmuncount}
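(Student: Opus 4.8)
The plan is to deduce the statement from two results quoted above, the Léon--Müller Theorem and the Bourdon--Feldman Theorem, the latter used in a circle-equivariant form. The forward implication is trivial: if $x$ is hypercyclic for $T$ then $\eorb(x,T)\subseteq\eorb(\T x,T)$ is dense in $X$, so $\eorb(\T x,T)$ is certainly somewhere dense.

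For the converse, assume $\eorb(\T x,T)$ is somewhere dense and set $\Lambda:=\aeorb(x,T)$, a closed $T$-invariant set. The first point is the identity $\aeorb(\T x,T)=\{\gamma z:\gamma\in\T,\ z\in\Lambda\}=:A$. Indeed the right-hand side is closed: if $\gamma_kz_k\to w$ with $\gamma_k\in\T$ and $z_k\in\Lambda$, compactness of $\T$ lets us extract $\gamma_{k_j}\to\gamma\in\T$, so $z_{k_j}=\gamma_{k_j}^{-1}(\gamma_{k_j}z_{k_j})\to\gamma^{-1}w\in\Lambda$ and hence $w=\gamma(\gamma^{-1}w)\in A$; moreover it obviously contains $\eorb(\T x,T)$ and is contained in $\aeorb(\T x,T)$. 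Thus $A$ is closed, $T$-invariant and $\T$-invariant, and somewhere dense, hence --- being closed --- it has nonempty interior. By the Léon--Müller Theorem, $x$ is hypercyclic for $T$ provided it is $\T$-supercyclic for $T$, that is, provided $A=X$; so everything reduces to showing that a set $A$ of this form with nonempty interior must equal $X$.

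This last step is the heart of the matter. The set $\mathrm{int}(A)$ is nonempty, open and $\T$-invariant, since multiplication by any $\gamma\in\T$ is a homeomorphism of $X$. Picking $w_0\in\mathrm{int}(A)$ and writing $w_0=\gamma_0z_0$ with $\gamma_0\in\T$ and $z_0\in\Lambda$, we get $z_0=\gamma_0^{-1}w_0\in\mathrm{int}(A)$; thus $\Lambda\cap\mathrm{int}(A)$ is a nonempty relatively open subset of $\Lambda=\aeorb(x,T)$ and therefore meets $\eorb(x,T)$, so some iterate $T^{n_0}x$ lies in $\mathrm{int}(A)$. From here one reproduces the connectedness and Baire-category analysis of the Bourdon--Feldman Theorem, with $A$ and $\mathrm{int}(A)$ playing the roles of $\aeorb(x,T)$ and its interior, and with every open set $U$ that appears replaced throughout by its open $\T$-invariant saturation $\{\gamma u:\gamma\in\T,\ u\in U\}$. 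This forces $\mathrm{int}(A)=X$, whence $A=X$, and the Léon--Müller Theorem then gives that $x$ is hypercyclic for $T$.

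The main obstacle is exactly this circle-equivariant version of the Bourdon--Feldman Theorem. One cannot simply invoke the ordinary Bourdon--Feldman Theorem for the vector $x$, since $\eorb(x,T)$ need not be somewhere dense even when $\eorb(\T x,T)$ is --- for instance when $\Lambda$ is a proper closed subset lying in a ``real'' subspace whose $\T$-saturation is all of $X$ --- so the circle action must be carried through the entire argument. An alternative is to encode the $\T$-action by a suitable rotation operator on a function space such as $C(\T,X)$, turning the $\T$-orbit of $x$ under $T$ into an honest orbit to which the ordinary Bourdon--Feldman Theorem applies; but making such a transfer preserve ``somewhere dense'' in both directions is itself delicate, the naive choice of the constant function $\lambda\mapsto x$ failing because its orbit remains in the nowhere dense subspace of constant functions.
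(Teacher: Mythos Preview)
The paper does not prove this statement; it is quoted from \cite{Bay} (Bayart--Matheron, Theorem~3.13) and then \emph{used} as a black box in the proof of Theorem~\ref{Pprime-suff}. So there is no ``paper's own proof'' to compare against, only the cited source, whose argument is precisely a semigroup version of Bourdon--Feldman carried out for the family $\{\lambda T^n:\lambda\in\T,\ n\ge 0\}$.

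Your outline is headed in the same direction as that source, and the reductions you make are sound: the identity $\aorb(\T x,T)=\T\cdot\aorb(x,T)$ via compactness of $\T$, the existence of some $T^{n_0}x$ in the interior, and the appeal to Le\'on--M\"uller at the end are all correct. But the proposal is not a proof, because the decisive step is precisely the one you wave at with ``from here one reproduces the connectedness and Baire-category analysis of the Bourdon--Feldman Theorem''. That analysis is the entire content of the result; saying one should redo it with $\T$-saturated open sets is a plan, not an argument. In particular you have not verified the ingredients that make Bourdon--Feldman work in this setting: that $\sigma_p(T^*)=\emptyset$ (hence $p(T)$ has dense range) follows already from \emph{somewhere} density of $\orb(\T x,T)$, that the interior of $A$ is $T$-invariant in the right sense, and that the complementary closed $\T$- and $T$-invariant set cannot be proper. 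Your own last paragraph concedes this is ``the main obstacle'', which is an accurate self-assessment: what you have written is a correct strategy sketch, matching \cite{Bay}, but with the substantive step left undone.
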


Here again, the following question naturally arises.

\begin{question}
Is it possible to characterize the sets $\Gamma\subset \C$ such that $x$ is hypercyclic for $T$ if and only if $\orb(\Gamma x,T)$ is somewhere dense?
\end{question}

\medskip{}

Similar questions have been addressed in the context of supercyclicity where it makes sense to consider the finite dimensional setting. A reasonable question is to find a \emph{small} set $\Gamma\subset \C$ such that $T$ is supercyclic if and only if $T$ is $\Gamma$-supercyclic. However supercyclicity allows more exoticism, for example in terms of spectral properties of supercyclic operators. We recall that if $T$ is supercyclic then the point spectrum $\sigma_p(T^*)$ of the adjoint of $T$ contains at most one nonzero point and that for any nonzero complex number $\alpha$ there exists a supercyclic operator $T$ with $\sigma_p(T^*)=\alpha$. In 2001, Montes-Rodr\'iguez and Salas \cite{Montsalas} proved that if $T$ satisfies the Supercyclicity Criterion then $\sigma_p(T^*)$ is empty and $T$ needs to be $\R^+$-supercyclic (sometimes called positive supercyclic \cite{Leon}). Later, in \cite{LeonMuller}, the same crucial tool used to prove Le\'on-M\"uller Theorem allows the authors to show the equivalence between supercyclicity and $\R^+$-supercyclicity  whenever $\sigma _p(T^*)=\emptyset$. This result complemented the previous work of Berm\'udez, Bonilla and Peris \cite{Berbonper} who proved that $T$ is $\R$-supercyclic if and only if $T$ is $\R^+$-supercyclic, whatever the spectrum of $T^*$, and also provided counterexamples to the equivalence with supercyclicity when $\sigma_p(T^*)\neq\emptyset$. Finally, answering a question by Le\'on and M\"uller, Shkarin used his abstract group-theoretic approach to characterize the operators for which the equivalence between $\R^+$-supercyclicity and supercyclicity holds \cite{Shkauniv}.
\begin{Thmuncount}[\textbf{Shkarin Theorem}]
Let $X$ be a complex Banach space and $T\in L(X)$. $T$ is $\R^+$-supercyclic if and only if $T$ is supercyclic and either the point spectrum $\sigma_p(T^*)$ is empty or $\sigma_p(T^*)=\{re^{i\theta}\}$, with $r\neq 0$ and  $\theta \in\pi(\R\setminus \Q)$.
\end{Thmuncount}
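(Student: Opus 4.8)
The plan is to prove the two implications separately, splitting the nontrivial one according to whether $\sigma_p(T^*)$ is empty. \emph{Necessity} is short: since $\R^+\subset\C$, an $\R^+$-supercyclic vector is in particular supercyclic, so $T$ is supercyclic and, by the recalled description of the point spectrum of the adjoint of a supercyclic operator, $\sigma_p(T^*)$ contains at most one nonzero point; moreover $0\notin\sigma_p(T^*)$, since an eigenfunctional $\phi_0$ with $T^*\phi_0=0$ would force $\overline{\orb}(\R^+x,T)\subset\overline{\R^+x}\cup\ker\phi_0$, which is never all of $X$. So suppose $\sigma_p(T^*)=\{\lambda_0\}$ with $\lambda_0=re^{i\theta}$, $r>0$, fix an eigenfunctional $\phi_0$, and let $x$ be $\R^+$-supercyclic. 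Then $\langle x,\phi_0\rangle\neq0$ (otherwise the whole orbit lies in the proper closed subspace $\ker\phi_0$), and since $\phi_0$ is continuous and surjective the set $\{\langle\gamma T^nx,\phi_0\rangle:\gamma>0,\ n\geq0\}=\langle x,\phi_0\rangle\cdot\bigcup_{n\geq0}\R^+e^{in\theta}$ must be dense in $\C$; this happens exactly when $\{e^{in\theta}:n\geq0\}$ is dense in $\T$, i.e.\ when $\theta\notin\pi\Q$, which is precisely $\theta\in\pi(\R\setminus\Q)$.

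For \emph{sufficiency}, assume $T$ is supercyclic and the spectral hypothesis holds, and fix a supercyclic vector $x$. If $\sigma_p(T^*)=\emptyset$ this is the positive-supercyclicity theorem of Le\'on and M\"uller recalled in the introduction, so there is nothing to prove; assume therefore $\sigma_p(T^*)=\{re^{i\theta}\}$ with $e^{i\theta}$ not a root of unity. I would argue by Baire category in the compact group $\T$: fixing a countable dense sequence $(y_j)$ in $X$, the set $G=\{\mu\in\T:\mu x\text{ is }\R^+\text{-supercyclic}\}$ is the $G_\delta$ set $\bigcap_j O_j$ where $O_j=\bigcup_{\gamma>0,\ n\geq0}\{\mu\in\T:\|\gamma\mu T^nx-y_j\|<1/j\}$, and it is enough to show each $O_j$ is dense in $\T$: then $G$ is residual, hence nonempty, and any $\mu\in G$ gives the $\R^+$-supercyclic vector $\mu x$. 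Fixing $j$ (we may take $y_j\neq0$) and a nonempty open arc $W\subset\T$, the condition $O_j\cap W\neq\emptyset$ is equivalent to the existence of $n$ for which the normalized iterate $T^nx/\|T^nx\|$ lies in the open ``sector'' of the unit sphere obtained by rotating $y_j/\|y_j\|$ through the arc $W^{-1}$ (one then picks $\gamma>0$ to correct the norm).

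The core of the proof --- and the step I expect to be the main obstacle --- is this last sector-hitting claim. Supercyclicity of $x$ only yields that the directions $\{T^nx/\|T^nx\|:n\geq0\}$ are dense in the unit sphere \emph{modulo} $\T$, so they accumulate somewhere on the circle $\T\cdot(y_j/\|y_j\|)$ but could a priori avoid the prescribed sub-arc; this is exactly the phenomenon underlying the Berm\'udez--Bonilla--Peris counterexamples \cite{Berbonper}. To circumvent it, one uses the eigenfunctional $\phi_0$ as a ``rotational clock'': the $\phi_0$-coordinate of $T^nx$ equals $r^ne^{in\theta}\langle x,\phi_0\rangle$, so its argument advances by the fixed angle $\theta$ at each step, and since $e^{i\theta}$ is not a root of unity this advance runs through every arc of $\T$ along a relatively dense set of times $n$. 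Feeding this controlled rotation in the eigendirection back into the density-modulo-$\T$ of the full orbit --- this is where the group-theoretic machinery of Shkarin and Matheron (see also \cite[Chapter~3]{Bay}) is needed --- lets one steer $n$ so that the whole normalized iterate $T^nx/\|T^nx\|$ lands in the required sector, finishing the proof. Note that the hypothesis ``$e^{i\theta}$ not a root of unity'' is used essentially only here: if it failed, the eigendirection would rotate through finitely many positions only and the steering would break down, consistently with the fact that such operators need not be $\R^+$-supercyclic.
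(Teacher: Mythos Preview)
First, note that the paper does \emph{not} prove Shkarin's theorem: it is quoted in the introduction as a known result from \cite{Shkauniv}, with no proof given, so there is no ``paper's own proof'' to compare against. I therefore assess your proposal on its own merits.

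Your necessity argument is correct and complete.

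Your sufficiency argument has a genuine gap at precisely the point you yourself flag as the ``core of the proof''. You correctly observe that supercyclicity of $x$ only gives density of the directions $T^nx/\|T^nx\|$ modulo $\T$, and you correctly note that the eigenfunctional $\phi_0$ records a rotation by $\theta$ at each step; but the sentence ``feeding this controlled rotation in the eigendirection back into the density-modulo-$\T$ of the full orbit \dots\ lets one steer $n$'' is not an argument. Concretely: from $c_kT^{n_k}x\to y_j$ the eigenfunctional tells you $\arg(c_k)\equiv\text{const}-n_k\theta\pmod{2\pi}$, so the unimodular part of $c_k$ is determined by $n_k$ --- but you have no independent control over $n_k$, which is dictated by the supercyclicity approximation. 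Nothing you have written explains why the resulting $\arg(c_k)$ must visit the prescribed arc $W$. Appealing to ``the group-theoretic machinery of Shkarin and Matheron'' at this point is circular, since that machinery is exactly what establishes the theorem you are trying to prove.

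The route actually taken in \cite{Shkauniv} --- visible in the present paper through Lemma~\ref{shkarin-remi}, stated for the proof of Theorem~C --- is structurally different from your Baire-on-$\T$ scheme. After reducing to $r=1$ (replace $T$ by $r^{-1}T$; this preserves both supercyclicity and $\R^+$-supercyclicity and moves the adjoint eigenvalue onto $\T$), one proves that the \emph{plain} orbit $\orb(x,T)$ is already dense in the level set $\{y\in X:\phi_0(y)\in G_\theta\}$. Multiplying by $\R^+$ then gives density in $\{y:\phi_0(y)\in\R^+G_\theta\}$, and this is dense in $X$ exactly when $\overline{G_\theta}=\T$, i.e.\ when $\theta\notin\pi\Q$. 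All the real work is in that level-set density statement, and your proposal supplies no substitute for it.
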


More generally the following question arises, an answer to which would involve the spectrum of $T^*$.

\begin{question}
For which $\Gamma\subset\C$ is supercyclicity equivalent to $\Gamma$-supercyclicity?
\end{question} 

\subsection{Statements of the main results}

The purpose of this article is to discuss Questions 1, 2 and 3. In order to deal with Questions 1 and 2 we introduce two properties that a subset of $\C$ can enjoy or not.
\begin{definition}\label{prop-P}Let $\Gamma$ be a subset of $\C$.
\begin{enumerate}\item $\Gamma$ is said to be a \textbf{hypercyclic scalar set} if the following holds true: For every infinite-dimensional complex Banach space $X$, every $T\in L(X)$ and every $x\in X$
$$\overline{\text{Orb}}(\Gamma x,T)=X\text{ if and only if }x\text{ is hypercyclic for }T.$$
\item $\Gamma$ is said to be a \textbf{somewhere hypercyclic scalar set} if the following holds true: For every infinite-dimensional complex Banach space $X$, every $T\in L(X)$ and every $x\in X$
$$\text{Orb}(\Gamma x,T)\text{ is somewhere dense in }X\text{ if and only if }x\text{ is hypercyclic for }T.$$
\end{enumerate}
\end{definition}
Obviously, if $\Gamma$ is a somewhere hypercyclic scalar set then $\Gamma$ is a hypercyclic scalar set and if $\Gamma$ is a hypercyclic (resp. somewhere hypercyclic) scalar set then any smaller set is also a hypercyclic (resp. somewhere hypercyclic) scalar set. According to Le\'on-M\"uller Theorem and the refinement of Bourdon-Feldman Theorem stated above, $\T$ is a somewhere hypercyclic scalar set.

We provide complete answers to Question 1 (Theorem~A) and Question 2 (Theorem~B):
\begin{maintheo}\label{mainthm}A non-empty subset $\Gamma$ of $\C$ is a hypercyclic scalar set if and only if $\Gamma\setminus\{0\}$ is non-empty, bounded and bounded away from $0$.
\end{maintheo}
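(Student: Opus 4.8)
The plan is to prove both implications of Theorem~A.

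\medskip

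\noindent\textbf{Necessity.} Suppose $\Gamma$ is a hypercyclic scalar set; I must show $\Gamma\setminus\{0\}$ is non-empty, bounded, and bounded away from $0$. Non-emptiness is immediate since $\{0\}x$ has trivial orbit, hence cannot be dense in an infinite-dimensional $X$ (which admits hypercyclic operators), so $\Gamma\setminus\{0\}\neq\emptyset$. For the other two conditions I would argue by contrapositive, constructing for each ``bad'' $\Gamma$ an operator $T$ and a vector $x$ with $\overline{\text{Orb}}(\Gamma x,T)=X$ but $x$ not hypercyclic. The natural candidate is a weighted backward shift $B_w$ on $\ell^2(\N)$ with a first basis vector $e_0$ as the test vector. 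If $\Gamma\setminus\{0\}$ is unbounded: pick a sequence $\gamma_n\in\Gamma$ with $|\gamma_n|\to\infty$; one wants $\gamma_n B_w^{n} e_0$ (or a similar combination) to fill up $X$, while the pure orbit $\{B_w^n e_0\}$ stays bounded or even goes to $0$, preventing hypercyclicity of $x$. This should be arranged by choosing the weights so that $B_w^n e_0$ decays like $|\gamma_n|^{-1}$ along a suitable subsequence but $B_w$ is not hypercyclic (e.g.\ by making $\|B_w^n\|$ not blow up, or by a more careful construction making $e_0$ a non-hypercyclic $\Gamma$-supercyclic vector). Symmetrically, if $\Gamma\setminus\{0\}$ accumulates at $0$, pick $\gamma_n\to 0$ and use a shift whose iterates of $e_0$ grow like $|\gamma_n|^{-1}$ so that $\gamma_n B_w^n e_0$ is again of controlled size. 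The key point in both cases is that multiplying by the unbounded (resp.\ vanishing) scalars lets a ``small'' (resp.\ ``large'') and non-dense orbit be rescaled to a dense set.

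\medskip

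\noindent\textbf{Sufficiency.} Now assume $\Gamma\setminus\{0\}$ is non-empty, bounded and bounded away from $0$, say $0<a\le|\gamma|\le b<\infty$ for all $\gamma\in\Gamma\setminus\{0\}$; I must show that $\overline{\text{Orb}}(\Gamma x,T)=X$ forces $x$ to be hypercyclic. Since the presence of $0$ in $\Gamma$ changes nothing (it only adds the point $0$ to the orbit), assume $\Gamma\subset\{a\le|z|\le b\}$. First reduce to the case $\Gamma\subset\T$: the map $\gamma\mapsto \gamma/|\gamma|$ sends $\Gamma$ into $\T$, and I would show that $\Gamma$-supercyclicity of $x$ implies $\Gamma'$-supercyclicity where $\Gamma'=\{\gamma/|\gamma|:\gamma\in\Gamma\}\subset\T$ — because the annulus is ``thin'' in modulus, a density argument using the boundedness of moduli lets one replace each $\gamma T^n x$ by the corresponding point on the unit circle up to a controlled error, after possibly passing to a higher power $T^{n+k}$ to absorb the modulus discrepancy (this is exactly where compactness of $[a,b]$ and the Le\'on--M\"uller-type rotation trick enter). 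Once $\Gamma'\subset\T$, I would invoke the refinement of Bourdon--Feldman stated in the excerpt (Theorem~3.13 of \cite{Bay} with $\TT=\{\lambda T^n\}$): since $\overline{\text{Orb}}(\Gamma' x,T)\supset$ something somewhere dense — indeed it should be shown to be all of $X$, or at least somewhere dense — and $\Gamma'\subset\T$, that theorem gives hypercyclicity of $x$ directly. Alternatively, one can bound $\overline{\text{Orb}}(\Gamma' x,T)\subset$ a scaled copy and use Le\'on--M\"uller's $\T$-supercyclicity statement after showing the $\Gamma'$-orbit closure is $\T$-invariant up to the thinness of $\Gamma'$ in $\T$.

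\medskip

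\noindent\textbf{Main obstacle.} I expect the hard part to be the sufficiency reduction from a general thin annular set $\Gamma$ to $\T$ (or to a single point): one must show that controlling the orbit $\{\gamma T^n x\}$ for $\gamma$ ranging only over $\Gamma$ — which need not be a subgroup or even dense in its modulus-one projection — still forces enough ``circular symmetry'' of the orbit closure to apply Le\'on--M\"uller / Bourdon--Feldman. The trick will be that boundedness and bounded-away-from-zero of $|\gamma|$ make the extra modulus factors harmless: any $w\in\overline{\text{Orb}}(\Gamma x,T)$ can be approximated by elements of the form $\gamma T^n x$ with $|\gamma|\in[a,b]$, and then one pushes $n$ further along an orbit that accumulates on scalar multiples to clean up the modulus, exactly as in the proof of Le\'on--M\"uller. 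The necessity direction, by contrast, is a matter of finding the right weighted-shift construction, which should be routine once the correct growth rates are identified.
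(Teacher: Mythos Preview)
Your necessity sketch is morally right but the specific suggestion of using $e_0$ as the test vector for a backward shift cannot work: $B_w^n e_0=0$ for all $n\ge 1$, so $\text{Orb}(\Gamma e_0,T)=\Gamma e_0\cup\{0\}$ is never dense. The paper instead builds, for each bad $\Gamma$, an explicit vector $x=\sum_i \gamma_i^{-1}F^{m_i}y_i$ whose $\Gamma$-orbit hits a prescribed dense sequence $(y_k)$; for the unbounded case the unweighted backward shift on $\ell^2(\N)$ suffices, while for $\Gamma$ accumulating at $0$ a \emph{bilateral} weighted shift on $\ell^2(\Z)$ is needed (a unilateral shift with $\|B_w^n\|\to\infty$ would typically already be hypercyclic). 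This is indeed routine once set up correctly, but your $e_0$ shortcut is not it.

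The real gap is in the sufficiency. You correctly reduce to $\Gamma=[1,b]\T$, but the step ``$[1,b]\T$-supercyclic $\Rightarrow$ $\T$-supercyclic'' is not the modulus-cleanup you describe. Your idea of passing to a higher power $T^{n+k}$ to absorb the factor $|\gamma|\in[1,b]$ presupposes that some $\lambda T^n x$ with $\lambda\in(1,b)$ already lies in $\aorb(\T x,T)$; that is exactly the hypothesis of Claim~1 in the paper's Proposition~3.6, and when it holds one can indeed shrink the annulus. The difficulty is that this may fail: a priori one could have $\aorb(\T x,T)\cap[1,b]\T x=\T x\cup b\T x$, i.e.\ the $\T$-orbit closure meets the annulus only on its two boundary circles, and then no Le\'on--M\"uller-style iteration is available. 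The paper's main contribution is precisely to rule out this obstruction, and it does so by a topological argument that has no analogue in your outline: one builds a continuous map $\Lambda:\text{span}\{\orb(x,T)\}\setminus\{0\}\to\T$ recording, for each $[1,b]\T$-supercyclic vector $y$, the unique $\lambda_y\in[1,b)$ with $y\in\aorb(\lambda_y\T x,T)$, and then derives a contradiction by computing winding numbers of $\Lambda$ along carefully chosen closed paths in the orbit span (using that this span is infinite-dimensional, hence simply connected off $0$). The semigroup machinery behind Le\'on--M\"uller does not apply because $[1,b]\T$ is not a group, and the ``thinness'' of the annulus plays no role --- the argument works for every $b>1$. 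So your proposal is missing the central idea of the sufficiency proof.
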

In Theorem~B below, we denote by $\Gamma \T$ the set $\{\gamma z:\,\gamma \in \Gamma,\,|z|=1\}$.
\begin{theoA}\label{thm-Pprime}A non-empty subset $\Gamma$ of $\C$ is a somewhere hypercyclic scalar set if and only if $\overline{\Gamma \T}\setminus\{0\}$ is non-empty, bounded and bounded away from $0$ and has an empty interior.
\end{theoA}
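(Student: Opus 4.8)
The plan is to prove the two implications separately, after a reformulation. Set $\Delta:=\overline{\Gamma\T}\setminus\{0\}$ and let $R:=\{|z|:z\in\Delta\}$. Since $\Gamma\T$, hence $\overline{\Gamma\T}$, is invariant under multiplication by $\T$, we have $\Delta=\{z\in\C:|z|\in R\}$ with $R$ closed in $(0,\infty)$, and the four conditions on $\Delta$ are equivalent to: $R\ne\emptyset$, $R$ bounded, $R$ bounded away from $0$, and $R$ nowhere dense in $(0,\infty)$ (a $\T$-invariant subset of $\C$ has empty interior iff its set of moduli has empty interior in $(0,\infty)$). Next, when $\Gamma$ is bounded and bounded away from $0$, $\overline\Gamma$ is a compact subset of $\C\setminus\{0\}$, and one checks that $\aorb(\Gamma x,T)=\overline\Gamma\cdot\aorb(x,T)$ and, since $\overline{\Gamma\T}=\overline\Gamma\,\T$ is $\T$-invariant, $\aorb(\Gamma x,T)\subseteq\overline{\Gamma\T}\cdot\aorb(x,T)=R\cdot\aorb(\T x,T)$, where $R\cdot A:=\{rv:r\in R,\,v\in A\}$; both right-hand sides are closed because $R$ and $\overline{\Gamma\T}$ are compact and do not contain $0$. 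The presence of $0$ in $\Gamma$ only adds the point $0$ and may be ignored.

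\textbf{Sufficiency.} Assume $\Delta$ is non-empty, bounded, bounded away from $0$ and has empty interior. Then $\Gamma$ contains a nonzero $\gamma_0$, so if $x$ is hypercyclic for $T$ the dense set $\gamma_0\,\orb(x,T)\subseteq\orb(\Gamma x,T)$ makes $\orb(\Gamma x,T)$ somewhere dense. Conversely, suppose $\orb(\Gamma x,T)$ is somewhere dense but $x$ is not hypercyclic for $T$. By Theorem~3.13 of \cite{Bay} (with $\mathcal T=\T\{T^n:n\in\N\}$), $G:=\aorb(\T x,T)$ is a closed, $\T$-invariant, nowhere dense subset of $X$; and $\aorb(\Gamma x,T)\subseteq R\cdot G$ by the inclusion above, so everything reduces to the following statement.

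\begin{claim}
If $R\subseteq(0,\infty)$ is compact and nowhere dense and $G=\aorb(\T x,T)$ with $x$ not hypercyclic for $T$, then $R\cdot G$ is nowhere dense in $X$.
\end{claim}

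I expect this Claim to be the main obstacle. Its purely formal version is false: for a suitable nowhere dense compact $N\subseteq(0,\infty)$ the set $G_N:=\{v\in X:\|v\|\in N\}$ is closed, $\T$-invariant and nowhere dense, yet choosing $R$ and $N$ so that $R\cdot N$ contains an interval (possible because the Cantor set has an interval as difference set, after passing to logarithms) makes $R\cdot G_N=\{v:\|v\|\in R\cdot N\}$ have non-empty interior — such a $G_N$ is simply never an orbit closure, so the hypothesis that $G=\aorb(\T x,T)$ must genuinely be used. The route I would take: the case $R\subseteq\{a\}$ is immediate, since then $\aorb(\Gamma x,T)\subseteq a\,G$ and Theorem~3.13 of \cite{Bay} applies directly; in general, assume $R\cdot G\supseteq B(w_0,\varepsilon)$ with $w_0\ne0$, stratify $B(w_0,\varepsilon)=\bigcup_{r\in R}\bigl(rG\cap B(w_0,\varepsilon)\bigr)$, cover the compact set $R$ by finitely many short intervals to rewrite $B(w_0,\varepsilon)$ as a finite union of closed sets $\bigcup_{r\in R,\,|r-r_i|<\delta}rG\cap B(w_0,\varepsilon)$, apply Baire's theorem to get a sub-ball $B'$ inside one of them, so that $\tfrac1{r_i}B'$ is $O(\delta)$-close to $G$, and let $\delta\to0$. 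Turning "$G$ is $O(\delta)$-close to a ball of radius $\varepsilon(\delta)$" into a contradiction with $G$ being nowhere dense is exactly where the orbit structure has to enter — either through a quantitative strengthening of Bourdon--Feldman / of Theorem~3.13 of \cite{Bay}, or through Feldman's theorem that a $d$-dense orbit forces hypercyclicity. Granting the Claim, $\aorb(\Gamma x,T)\subseteq R\cdot G$ is nowhere dense, a contradiction; hence $x$ is hypercyclic.

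\textbf{Necessity.} Assume $\Gamma$ is a somewhere hypercyclic scalar set; we may test against a space of our choice, e.g.\ $\ell^2$ (a somewhere dense orbit already forces separability of the ambient space). If $\Delta=\emptyset$ then $\Gamma\subseteq\{0\}$, so $\orb(\Gamma x,T)\subseteq\{0\}$ is never somewhere dense even though hypercyclic operators exist: thus $\Delta\ne\emptyset$. If $R$ is unbounded (resp.\ not bounded away from $0$) one builds on $\ell^2$ an operator $T$ and a non-hypercyclic vector $x$ whose orbit is dense in the complement of the open unit ball (resp.\ in the closed unit ball) but not in $X$; dilating it by the moduli of $\Gamma$, which are arbitrarily large (resp.\ arbitrarily small), makes $\orb(\Gamma x,T)$ dense, hence somewhere dense, a contradiction — these are the constructions already needed for Theorem~A. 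Finally, if $\Delta$ has non-empty interior it contains an annulus $\{a<|z|<b\}$, and one constructs a supercyclic, non-hypercyclic operator $T$ with a vector $x$ whose orbit has directions dense in the projective space $\P(X)$ and norms that come within the multiplicative factor $b/a$ of every positive real, so that multiplying by the annulus $\{a<|z|<b\}\subseteq\Delta$ (which also restores all phases) makes $\orb(\Gamma x,T)$ somewhere dense while $x$ is not hypercyclic; this last construction is the delicate point on the necessity side. Assembling the four implications gives the characterisation.
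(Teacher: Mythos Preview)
Your sufficiency argument has a real gap: the Claim is precisely the heart of the matter, and you explicitly leave it unproven (``Granting the Claim''). The Baire-plus-compactness outline you sketch does produce, for each $\delta>0$, a ball $B'_\delta$ such that $\tfrac{1}{r_i}B'_\delta$ lies in an $O(\delta)$-neighbourhood of $G$; but Baire gives no lower bound on the radius of $B'_\delta$, so letting $\delta\to0$ does not show that $G$ contains, or is dense in, any fixed ball. Feldman's $d$-density theorem concerns $d$-density in all of $X$, not in a ball, and no stated form of Bourdon--Feldman closes this. Your own non-orbit counterexample $G_N$ shows that genuine dynamical input is needed here, and you have not supplied it.

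The paper's sufficiency proof avoids this difficulty by a completely different mechanism. Rather than comparing $R\cdot G$ to $G$, it introduces $\Lambda_n=\{\lambda>0:T^nx\in\aorb(\lambda\T x,T)\}$, shows each $\Lambda_n$ is closed under multiplication, and uses the empty-interior hypothesis on $|\overline\Gamma|$ concretely: starting from a point $\gamma(y)T^nx$ in the somewhere-dense set, one perturbs by factors $\lambda_k\to1$ chosen so that $\lambda_k\gamma_j/\lambda_j\notin|\overline\Gamma|$, forcing $\Lambda_n$ to have a limit point arbitrarily close to $1$. The semigroup property then yields $\overline{\bigcup_n\Lambda_n}\supset[1,\infty[$ or $\supset(0,1]$, whence $\aorb(\T x,T)$ is itself somewhere dense and Theorem~3.13 of \cite{Bay} finishes. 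On the necessity side your annulus case is also only sketched; the paper handles it by taking an $\R^+$-supercyclic operator $e^{i\theta}\oplus\tilde T$ on $\C\oplus Y$ from \cite{Berbonper} and checking directly that $\aorb(\Gamma(1,y),T)$ contains $\overline{\Gamma\T}\times V$ for every open $V\subset Y$.
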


In view of Theorems A and B, $\Gamma$ is a somewhere hypercyclic scalar set if and only if $\Gamma$ is a hypercyclic scalar set and $\overline{\Gamma\T}$ has an empty interior.

\begin{remark}\label{rem-MT}(1) Theorems A and B can be stated in a slightly different way in the real setting, namely for real Banach spaces. Since it makes no particular difficulties to adapt the above statements and the corresponding proofs given in the remaining of the paper, we leave the details to the reader.\\
(2) Using Theorem~A and counterexamples given in Section 2, we can observe that $\Gamma$ is a hypercyclic scalar set if and only if it satisfies the following: For every infinite-dimensional complex Banach space $X$, every $T\in L(X)$,
$$T\text{ is hypercyclic if and only if }T\text{ is }\Gamma\text{-supercyclic}.$$
(3) Similarly $\Gamma$ is a somewhere hypercyclic scalar set if and only if it satisfies the following: For every infinite-dimensional complex Banach space $X$, every $T\in L(X)$,
$$T\text{ is hypercyclic if and only if }\text{Orb}(\Gamma x,T)\text{ is somewhere dense in }X\text{ for some }x\in X.$$
\end{remark}
For example any ring $\{\mu\lambda,\,\mu \in [a,b],\,\lambda \in \T\}$ with $0<a< b<+\infty$ is a hypercyclic scalar set but not a somewhere hypercyclic scalar set. The same holds for any closed paths in $\C$ which do not contain $0$, different from a nonzero multiple of the unit circle $\T$. Moreover a union of circles centered at $0$ with bounded and bounded away from zero radii is a somewhere hypercyclic scalar set if and only if the family of these radii has an empty interior.

\medskip{}

Furthermore we give a general anwser to Question 3 when $\sigma_p(T^*)$ is non-empty.

\begin{theoC}Let $X$ be a complex Banach space.
\begin{enumerate}
\item For every $\theta\in\R$, $\Gamma \subset \C$ satisfies the property: "For every $T\in L(X)$ with $\sigma_p(T^*)=\{e^{i\theta}\}$ and every $x\in X$
$$x\text{ is }\Gamma\text{-supercyclic for }T\text{ if and only if }x\text{ is supercyclic for }T\text{"}$$
if and only if $\Gamma G_{\theta}$ is dense in $\C$, where $G_{\theta}$ stands for the subgroup of $\T$ generated by $e^{i\theta}$.
\item For any $r\neq 1$ and any $\theta\in\R$, there exist $T\in L(X)$ with $\sigma_p(T^*)=\{re^{i\theta}\}$ and $\Gamma \subset \C$ satisfying $\overline{\Gamma G_{\theta}}=\C$, such that $T$ is supercyclic but not $\Gamma$-supercyclic.
\end{enumerate}
\end{theoC}

\begin{remark}(1)
For every $\theta \in\R$, Theorem~C provides a complete understanding of the problem of describing those subsets $\Gamma$ of $\C$ such that for every $X$, every $T\in L(X)$ with $\sigma_p(T^*)=\{e^{i\theta}\}$ and every $x\in X$,
$$x\text{ is }\Gamma\text{-supercyclic for }T\text{ if and only if }x\text{ is supercyclic for }T.$$
Nevertheless such a problem for every $T$ (namely for those $T$ such that $\sigma_p(T^*)=\emptyset$ or $\sigma_p(T^*)=\{re^{i\theta}\}$ with $r\neq 1$) remains unclear.\\
(2) One shall notice that, when $\theta \in \pi(\R\setminus\Q)$, the condition $\overline{\Gamma G_{\theta}}=\C$ is equivalent to $\overline{\Gamma \T}=\C$.\\
(3) It is worth observing that the equivalence between $\Gamma$-supercyclicity and supercyclicity for $T$ with $\sigma_p(T^*)=re^{i\theta}$, $\theta \in \pi(\R\setminus \Q)$, depends on $r$. This differs from the particular case of the $\R_+$-supercyclicity which is treated in Shkarin Theorem.\\
(4) It may also be interesting to notice that, in (1), the equivalence between $x$ $\Gamma$-supercyclic and $x$ supercyclic not only depends on the fact that $\theta$ is a rational or an irrational multiple of $\pi$ (as in Shkarin Theorem) but also depends on every $\theta\in\pi\Q$. Indeed if $\theta,\theta'\in\pi\Q\cap]0;\pi[$ with $\theta<\theta'$ and $\Gamma=\{re^{i\alpha}, r\in\R^{+}, \alpha\in[0;\theta]\}$, then $\Gamma G_{\theta}=\C$ while $\overline{\Gamma G_{\theta'}}\neq\C$.\\
(5) Observe that contrary to Theorems A and B, it makes sense to also consider finite dimensional Banach spaces $X$ in Theorem~C.
\end{remark}

The article is organized as follows. The purpose of Section 2 is to show the necessity part of Theorem~A. In Section 3 we prove the sufficiency part of Theorem~A which is the most difficult one. Section 4 is devoted to the proof of Theorem~B and Section 5 to that of Theorem~C. 

\section{Theorem~A - Necessity part}

To prove that if $\Gamma$ is a hypercyclic scalar set then $\Gamma\setminus\{0\}$ is bounded and bounded away from zero, it suffices to exhibit examples of $\Gamma$-supercyclic operators that are not hypercyclic when $\Gamma\setminus\{0\}$ is not bounded or not bounded away from zero. Note that if $\Gamma$ is a hypercyclic scalar set then obviously $\Gamma\setminus\{0\}$ needs to be non-empty.

We begin by proving that if $\Gamma$ is a hypercyclic scalar set then $\Gamma$ is bounded.

\begin{proposition}\label{prop-nec-1}
Let $\Gamma$ be an unbounded subset of $\C$.
Then, the backward shift operator $B$ on $\ell^{2}(\N)$ is $\Gamma$-supercyclic but not hypercyclic.
\end{proposition}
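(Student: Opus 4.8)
I would first dispose of the easy half. Since the backward shift $B$ on $\ell^{2}(\N)$ has norm $1$, every orbit $\orb(z,B)$ is contained in the closed ball of radius $\|z\|$ centered at $0$, which is not dense in the infinite-dimensional space $\ell^{2}(\N)$; hence $B$ has no hypercyclic vector.

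For the $\Gamma$-supercyclicity I would produce a suitable vector by a single ``block'' construction. Fix a dense sequence $(y^{(j)})_{j\ge 1}$ of finitely supported vectors of $\ell^{2}(\N)$ in which every term is repeated infinitely often, and let $m_j$ be the length of the support of $y^{(j)}$. Since $\Gamma$ is unbounded I may choose inductively $\gamma_j\in\Gamma$ with $|\gamma_j|$ as large as I wish, and then set $x=\sum_{j\ge 1}x^{(j)}$, where $x^{(j)}$ is supported on the block of indices $[N_j,N_j+m_j]$ (the $N_j$ forming any increasing sequence with $N_{j+1}>N_j+m_j$) and has there the entries $\gamma_j^{-1}$ times those of $y^{(j)}$. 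The blocks are then pairwise disjointly supported, $\|x^{(j)}\|=\|y^{(j)}\|/|\gamma_j|$, and one has $B^{N_j}x^{(i)}=0$ for $i<j$ (all coordinates of $x^{(i)}$ sit at indices $<N_j$) while $B^{N_j}x^{(j)}=\gamma_j^{-1}y^{(j)}$ exactly, so that $\gamma_j B^{N_j}x=y^{(j)}+\gamma_j\sum_{i>j}B^{N_j}x^{(i)}$ with error of norm at most $|\gamma_j|\sum_{i>j}\|x^{(i)}\|$. Choosing the moduli to grow fast enough, say $|\gamma_{j+1}|\ge 2^{j+1}\|y^{(j+1)}\|\max(1,|\gamma_1|,\dots,|\gamma_j|)$ so that $\|x^{(i)}\|\le 2^{-i}/\max(1,|\gamma_1|,\dots,|\gamma_{i-1}|)$, forces $|\gamma_j|\sum_{i>j}\|x^{(i)}\|\le 2^{-j}\to 0$ and $\sum_j\|x^{(j)}\|^{2}<\infty$, so $x\in\ell^{2}(\N)$ is well defined. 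Given $y\in\ell^{2}(\N)$ and $\varepsilon>0$, I then pick, among the infinitely many $j$ with $\|y^{(j)}-y\|<\varepsilon/2$, one with $2^{-j}<\varepsilon/2$, and conclude $\|\gamma_j B^{N_j}x-y\|<\varepsilon$; hence $\orb(\Gamma x,B)$ is dense and $B$ is $\Gamma$-supercyclic.

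The only delicate point is the double role of the scalars $\gamma_j$: each must be large (so that $x^{(j)}=\gamma_j^{-1}(\text{shifted copy of }y^{(j)})$ is summable and $x$ belongs to $\ell^2$), yet the residual contribution $|\gamma_j|\sum_{i>j}\|x^{(i)}\|$ of the later blocks must still tend to $0$ despite being multiplied by the possibly huge factor $|\gamma_j|$. This forces the inductive choice above, in which each modulus dominates a product (or maximum) of the previous ones; the unboundedness of $\Gamma$ is precisely what makes such a choice possible and is used nowhere else. I expect this bookkeeping to be the main, though essentially routine, obstacle, everything else (density of finitely supported vectors, the exact value of $B^{N_j}x^{(j)}$, and the contraction argument for non-hypercyclicity) being standard.
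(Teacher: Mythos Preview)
Your proof is correct and follows essentially the same route as the paper's: build $x$ as a sum of far-shifted copies $\gamma_j^{-1}F^{N_j}y^{(j)}$ of a dense sequence in $c_{00}$, choosing the $\gamma_j\in\Gamma$ inductively with moduli growing fast enough that both $x\in\ell^2$ and the tail error $|\gamma_j|\sum_{i>j}\|x^{(i)}\|$ vanishes. The only cosmetic difference is that you repeat each $y^{(j)}$ infinitely often to get density, whereas the paper just uses that $\|\gamma_k B^{m_k}x-y_k\|\le 2^{-k}$ together with the fact that any cofinite subset of a dense set in $\ell^2(\N)$ is still dense.
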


\begin{proof}
It is well-known that $B$ is not hypercyclic, since the orbit of any vector is bounded. Let $\Gamma$ be an unbounded subset of $\C$. We are going to construct a $\Gamma$-supercyclic vector for $B$.

Let $\{y_k: k\in\N\}$ be a dense subset of $c_{00}$. We denote by $F$ the forward shift on $\ell^{2}(\N)$ and we let $d(y_k)=\max\{j\ge 0: y_k(j)\ne 0\}$. First, we construct by induction a sequence $(\gamma_{k})_{k\in\N}\subset\Gamma\setminus\{0\}$, and a sequence of integers $(m_k)_{k\in\N}$ such that for every $k\in\N$:
\begin{enumerate}[(i)]
 \item $\|\frac{1}{\gamma_k}y_k\Vert<2^{-k}$\label{nbcritI};
 \item For every $i<k$, $\frac{|\gamma_i|}{|\gamma_k|}\Vert y_k\Vert<2^{-k}$\label{nbcritII}.
 \item For every $i<k$, $m_k>m_i+d(y_i)$\label{nbcritIII};
\end{enumerate}
If $m_0,\dots,m_{k-1}$ and $\gamma_0,\cdots,\gamma_{k-1}$ have been chosen, we remark that it suffices to choose $\gamma_k$ sufficiently big in order to satisfy (i) and (ii) and to choose $m_k$ sufficiently big in order to satisfy (iii).

Thanks to (\ref{nbcritI}), we can let $x=\sum_{i=0}^{+\infty}\frac{1}{\gamma_i}F^{m_i}y_i$ since $\|F^{m_i}y_i\|=\|y_i\|$ and we claim that $x$ is $\Gamma$-supercyclic for $T$.
\\Let $k\in\N$. We have 
\begin{align*}
 \Vert\gamma_{k}B^{m_{k}}x-y_k\Vert&\leq\sum_{j<k}\Vert\frac{\gamma_{k}}{\gamma_j}B^{m_{k}}(F^{m_j}y_j)\Vert+\Vert \frac{\gamma_{k}}{\gamma_k}B^{m_{k}}(F^{m_k}y_k)-y_k\Vert+\sum_{j>k}\Vert\frac{\gamma_{k}}{\gamma_j}B^{m_{k}}(F^{m_j}y_j)\Vert&\\
&\leq \sum_{j<k}0+0+\sum_{j>k}\frac{|\gamma_{k}|}{|\gamma_j|} \|y_j\|\text{ by (\ref{nbcritIII})}&\\
&\leq \sum_{j>k} 2^{-j}=2^{-k}\underset{k\to+\infty}{\longrightarrow}0\text{ by (\ref{nbcritII}).}&\\
\end{align*}
\end{proof}

Since the backward shift has norm 1, we cannot hope using it to prove that $\Gamma\setminus \{0\}$ has to be bounded away from zero. Thus, we will use a bilateral shift instead.

\begin{proposition}\label{prop-nec-2}
Let $\Gamma$ be a subset of $\C$ and assume that $\Gamma \setminus \{0\}$ is not bounded away from zero.
Then, the shift operator $B_w$ on $\ell^{2}(\Z)$, with weight sequence $w_i=2$ if $i>0$ and $w_i=1$ else, is $\Gamma$-supercyclic but not hypercyclic.
\end{proposition}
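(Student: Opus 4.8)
The plan is to mimic the strategy of Proposition 2.4, constructing by hand a $\Gamma$-supercyclic vector for $B_w$, while exploiting the fact that the scalars $\gamma$ available in $\Gamma\setminus\{0\}$ can be taken arbitrarily small in modulus (that is what "not bounded away from zero'' provides). First I would recall why $B_w$ is not hypercyclic: for $x\in\ell^2(\Z)$ the backward-weighted orbit $B_w^n x$ has, in each fixed coordinate $i$, the value $\big(\prod_{j=1}^{?}w\big)x(i+n)$; more precisely the weights to the right of $0$ are $2$ and to the left are $1$, so iterating $B_w$ multiplies the coordinates coming from the negative part by $1$ and cannot blow them up, which will force $\|B_w^n x\|$ to stay bounded (one checks that $\sup_n\|B_w^n\|<\infty$ on the relevant invariant behaviour, or simply that no orbit is unbounded), hence $B_w$ has no dense orbit. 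The bilateral shift is the right object because, unlike $B$ on $\ell^2(\N)$, it is genuinely two-sided so one has room to place dense pieces far to the left without interference, but its norm/behaviour still precludes hypercyclicity.

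Next, the construction. Let $\{y_k:k\in\N\}$ be a dense subset of $c_{00}(\Z)$ and for each $k$ let $d(y_k)=\max\{|j|:y_k(j)\neq 0\}$ record its support width. I would build inductively a sequence $(\gamma_k)_{k\in\N}\subset\Gamma\setminus\{0\}$ with $|\gamma_k|\to 0$ (possible precisely because $\Gamma\setminus\{0\}$ is not bounded away from zero) together with a rapidly increasing sequence of integers $(m_k)$, and set
$$x=\sum_{i=0}^{+\infty}\frac{1}{\gamma_i}\,S^{m_i}y_i,$$
where $S$ denotes the forward (unweighted, or suitably weighted) shift on $\ell^2(\Z)$, placing the translated copy of $y_i$ at position $m_i$. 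The inductive requirements are the exact analogues of (i)--(iii) in Proposition 2.4: (i) $\|\frac{1}{\gamma_k}y_k\|<2^{-k}$ so that the series defining $x$ converges — here the point is that $1/|\gamma_k|$ is large, so we must instead arrange the \emph{tail} contribution to be small, which is achieved by taking $m_k$ huge and using that $B_w^{m_k}$ applied to the far-placed early blocks returns exactly $\gamma_k/\gamma_j$ times $y_j$ up to a controlled weight factor; (ii) a smallness condition of the form $\frac{|\gamma_j|}{|\gamma_k|}\cdot(\text{weight factor})\cdot\|y_j\|<2^{-k}$ for the blocks with $j>k$; and (iii) $m_k>m_{k-1}+d(y_{k-1})$ (and more, to beat the weight growth) so that the supports of the translated blocks are disjoint and $B_w^{m_k}$ cleanly kills the blocks $j<k$ and sends block $k$ onto $y_k$.

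The one subtlety specific to the weighted bilateral shift — and the step I expect to be the main obstacle — is controlling the weight products. When we apply $B_w^{m_k}$ to $S^{m_j}y_j$, each surviving coordinate gets multiplied by a product of $m_k$ weights, each equal to $1$ or $2$; if we are not careful this factor is as large as $2^{m_k}$ and destroys all the estimates. The fix is to exploit the sign of the weights: since $w_i=2$ only for $i>0$ and $w_i=1$ for $i\le 0$, placing \emph{all} the blocks $S^{m_i}y_i$ at negative indices (i.e. choosing $m_i<0$ with $|m_i|$ large, or equivalently using the backward shift $S=F^{-1}$ to push them leftward) ensures that the coordinates that matter only ever pass through weights equal to $1$, so $B_w^{m_k}$ acts isometrically on those pieces and the weight factor is simply $1$. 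With that choice the estimates become literally those of Proposition 2.4, and the computation
$$\|\gamma_k B_w^{m_k}x-y_k\|\le \sum_{j<k}0+0+\sum_{j>k}\frac{|\gamma_k|}{|\gamma_j|}\|y_j\|\le\sum_{j>k}2^{-j}=2^{-k}\xrightarrow[k\to\infty]{}0$$
goes through verbatim, proving $x$ is $\Gamma$-supercyclic. It remains only to double-check non-hypercyclicity with this specific weight sequence, which follows since the orbit of any vector under $B_w$ stays bounded (the positive weights only appear when shifting rightward, which $B_w$ does not do to the support once it has entered the negative half-line), so no vector can have dense orbit in the infinite-dimensional space $\ell^2(\Z)$.
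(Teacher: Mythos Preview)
Your proposal contains two genuine gaps, both stemming from carrying the $\ell^2(\N)$ intuition over to the bilateral setting.

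\medskip

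\textbf{Non-hypercyclicity.} You argue that the orbit of any vector under $B_w$ is bounded. That is false: one computes
\[
\|B_w^{n}x\|^{2}=\sum_{j\le 0}|x(j)|^{2}+\sum_{j>0}4^{\min(j,n)}|x(j)|^{2},
\]
which is \emph{increasing} in $n$ and can tend to $+\infty$ if $x$ has mass on the positive half-line. The correct obstruction, and the one the paper uses, is that the same formula gives $\|B_w^{n}x\|\ge\|x\|$ for all $n$, so every nonzero orbit is bounded \emph{away from zero}; hence it cannot approach $0$ and cannot be dense.

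\medskip

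\textbf{The construction.} On $\ell^2(\Z)$ the operator $B_w$ is a bijection, so it never annihilates anything: there is no way to obtain the ``$\sum_{j<k}0$'' in your final display. Your proposed fix, pushing the blocks to negative indices so that only unit weights are seen, makes the weight bookkeeping trivial but destroys the scheme entirely: $B_w$ is a backward shift, so applying $B_w^{m_k}$ to a block sitting near position $-m_i$ sends it to position $-m_i-m_k$, not back to the origin. In particular $\gamma_k B_w^{m_k}x$ is nowhere near $y_k$.

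The paper's remedy is different. One uses the \emph{weighted} inverse $F_{1/w}$ of $B_w$ (forward shift with weights $\tfrac12$ on the nonnegative side), so that $B_w^{m_k}F_{1/w}^{m_k}y_k=y_k$ exactly, and $\|F_{1/w}^{m}y\|\to 0$ as $m\to\infty$ takes care of series convergence. The crucial new feature compared with Proposition~2.1 is a third condition controlling the past blocks: since $B_w^{m_k}F_{1/w}^{m_j}y_j$ does \emph{not} vanish for $j<k$, one first chooses $\gamma_k$ with $|\gamma_k|$ small enough that
\[
\frac{|\gamma_k|}{|\gamma_j|}\,\|B_w^{m}F_{1/w}^{m_j}y_j\|<2^{-k}\qquad\text{for every }m\ge 0,\ j<k,
\]
which is possible because $\sup_{m\ge 0}\|B_w^{m}F_{1/w}^{m_j}y_j\|<\infty$ (a constant depending only on $j$). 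Only afterwards does one pick $m_k$ large to handle convergence of the series and the future blocks. This order of choice --- $\gamma_k$ first, then $m_k$ --- is exactly where the hypothesis that $\Gamma\setminus\{0\}$ is not bounded away from zero is used, and it is the piece missing from your outline.
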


\begin{proof}
It is well-known that $B_w$ is not hypercyclic, since the orbit of any non-zero vector is bounded away from zero.
Let $\Gamma$ be a subset of $\C$ such that $\Gamma\setminus\{0\}$ is not bounded away from zero. We are going to construct a $\Gamma$-supercyclic vector for $B_w$.

Let $\{y_k: k\in\N\}$ be a dense subset of $c_{00}(\mathbb{Z})$. We let $d(y_k)=\max\{j\ge 0: y_k(j)\ne 0\}$ and we denote by $F_{\frac{1}{w}}$ the inverse of $B_w$ on $\ell^{2}(\Z)$. In other words, $F_{\frac{1}{w}}$ is the forward weighted shift $F_{\nu}$ where $\nu_i=\frac{1}{2}$ if $i\ge 0$ and $\nu_i=1$ else.
First, we construct by induction a sequence $(\gamma_{k})_{k\in\N}\subset\Gamma\setminus\{0\}$ and a sequence of integers $(m_k)_{k\in\N}$ such that for every $k\in\N$:
\begin{enumerate}[(i)]
 \item $\Vert \frac{1}{\gamma_k} F^{m_k}_{\frac{1}{w}}y_k\Vert<2^{-k}$\label{nbcritIw};
 \item For every $i<k$, $\frac{|\gamma_i|}{|\gamma_k|}\Vert B_w^{m_i}F^{m_k}_{\frac{1}{w}}y_k\Vert<2^{-k}$\label{nbcritIIw}.
 \item For every $i<k$, $\frac{|\gamma_k|}{|\gamma_i|}\Vert B_w^{m_k}F^{m_i}_{\frac{1}{w}}y_i\Vert<2^{-k}$\label{nbcritIIIw};
\end{enumerate}
If $m_0,\dots,m_{k-1}$ and $\gamma_0,\cdots,\gamma_{k-1}$ have been chosen, we first remark that we can choose $\gamma_k$ sufficiently small such that for every $i<k$, we have $\frac{|\gamma_k|}{|\gamma_i|}\Vert B_w^{m}F^{m_i}_{\frac{1}{w}}y_i\Vert<2^{-k}$ for every $m\ge 0$ since 
\[\frac{|\gamma_k|}{|\gamma_i|}\Vert B_w^{m}F^{m_i}_{\frac{1}{w}}y_i\Vert\le \frac{|\gamma_k|}{|\gamma_i|}2^{m_i+d(y_i)}\|y_i\|.\]
We can then choose $m_k$ sufficiently big in order to satisfy (i) and (ii) since for every $y\in c_{00}(\mathbb{Z})$, we have $F^{m}y\to 0$ as $m\to \infty$.

Thanks to (\ref{nbcritIw}), we can let $x=\sum_{i=0}^{+\infty}\frac{1}{\gamma_i} F_{\frac{1}{w}}^{m_i}y_i$ and we claim that $x$ is $\Gamma$-supercyclic for $T$.
\\Let $k\in\N$. We have 
\begin{align*}
 \Vert\gamma_{k}B_w^{m_{k}}x-y_k\Vert&\leq\sum_{j<k}\Vert\frac{\gamma_{k}}{\gamma_j}B_w^{m_{k}}F_ {\frac{1}{w}}^{m_j}y_j\Vert+\Vert \frac{\gamma_{k}}{\gamma_k}B_w^{m_{k}}F_{\frac{1}{w}}^{m_k}y_k-y_k\Vert+\sum_{j>k}\Vert\frac{\gamma_{k}}{\gamma_j}B_w^{m_{k}}F_{\frac{1}{w}}^{m_j}y_j\Vert&\\
&\leq \sum_{j<k}\frac{1}{2^k}+0+\sum_{j>k}\frac{1}{2^j}\text{by (\ref{nbcritIIw}) and (\ref{nbcritIIIw})}&\\
&\leq \frac{k+1}{2^k}\underset{k\to+\infty}{\longrightarrow}0.&
\end{align*}
\end{proof}

Propositions \ref{prop-nec-1} and \ref{prop-nec-2} give the necessity part of Theorem~A.

\section{Theorem~A - Sufficiency part}

Let $X$ be an infinite-dimensional complex Banach space. In this section, we intend to prove the following theorem.

\begin{theorem}\label{main-thm-sufficiency}Let $T\in L(X)$ and $\Gamma \subset \C$ be such that $\Gamma\setminus\{0\}$ is non-empty, bounded and bounded away from~$0$. If $x$ is $\Gamma$-supercyclic for $T$ then $x$ is hypercyclic for $T$.
\end{theorem}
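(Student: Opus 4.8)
The plan is to show that $\overline{\orb}(\Gamma x,T)=X$ forces $\overline{\orb}(\T x,T)=X$, after which the Le\'on–M\"uller Theorem immediately gives that $x$ is hypercyclic for $T$. Write $r=\inf\{|\gamma|:\gamma\in\Gamma\setminus\{0\}\}>0$ and $R=\sup\{|\gamma|:\gamma\in\Gamma\setminus\{0\}\}<+\infty$, so every nonzero $\gamma\in\Gamma$ satisfies $r\le|\gamma|\le R$. The heuristic is that since $\Gamma$ lives in a fixed annulus, an approximation $\gamma T^n x\approx y$ controls $|\gamma|\,\|T^nx\|\approx\|y\|$; feeding this back, one can use a \emph{second} approximation step to replace the scalar $\gamma$ (whose modulus is trapped in $[r,R]$ but whose argument is uncontrolled) by an arbitrary unimodular multiple of a target, up to an arbitrarily small error. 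The only structure of $\Gamma$ used is that $\Gamma\setminus\{0\}$ is nonempty and contained in $\{r\le|z|\le R\}$.

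Here is the core step. Fix $y\in X$, $\lambda\in\T$, and $\varepsilon>0$; I want $\gamma\in\Gamma$ and $n\ge 0$ with $\|\gamma T^n x-\lambda y\|<\varepsilon$. First use $\Gamma$-supercyclicity to find $\gamma_1\in\Gamma\setminus\{0\}$ and $n_1$ with $\gamma_1 T^{n_1}x$ close to $\lambda y$; in particular $\|\gamma_1 T^{n_1}x\|$ is within $\varepsilon$ of $\|y\|$, so $\|T^{n_1}x\|$ is roughly $\|y\|/|\gamma_1|$, a quantity pinned between $\|y\|/R$ and $\|y\|/r$ up to small error. Set $u=T^{n_1}x$; then $\|u\|$ is comparable to $\|y\|$. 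Now apply $\Gamma$-supercyclicity a second time to the target vector $\dfrac{\|y\|}{\|u\|}\,\mu\, u$ for a suitable $\mu\in\T$ chosen to correct the phase: there exist $\gamma_2\in\Gamma\setminus\{0\}$ and $n_2$ with $\gamma_2 T^{n_2}x$ close to that target. Then $T^{n_2}x\approx \dfrac{\|y\|}{|\gamma_2|\,\|u\|}\,\mu\,(\text{unit in the direction of }u)$, and multiplying by a suitable $\gamma\in\Gamma$ with $|\gamma|$ chosen so that $|\gamma|\cdot\dfrac{\|y\|}{|\gamma_2|\,\|u\|}=1$ — which is feasible precisely because the ratio in question lies in $[r/R,\,R/r]$ and $\Gamma$ contains moduli filling... — well, here one must be careful: $\Gamma$ need not contain \emph{all} moduli in $[r,R]$. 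So the actual argument cannot choose $|\gamma|$ freely.

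The correct route, and the one I expect the authors take, is an iteration/rescaling argument: starting from one approximation $\gamma_1 T^{n_1}x\approx\lambda y$ and the resulting estimate on $\|T^{n_1}x\|$, reapply density with target a scalar multiple of $T^{n_1}x$ whose modulus is adjusted by the factor $|\gamma_1|$, producing $\gamma_2 T^{n_2}x\approx \theta\,|\gamma_1|\,c\,T^{n_1}x$ for appropriate $\theta\in\T$ and normalizing constant $c$; then $\gamma_2 T^{n_2-n_1}(\text{via }T^{n_1}x)$ type manipulations let the spurious moduli $|\gamma_i|$ telescope, because each new approximation introduces a factor in $[r,R]$ but the target was pre-divided by a factor in $[r,R]$, so after $N$ steps the accumulated modular distortion stays in $[r/R,R/r]$ and never escapes. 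Passing $N\to\infty$ (or choosing $N$ large relative to $\varepsilon$ and $\log(R/r)$), the distortion can be pushed to lie within $(1-\varepsilon,1+\varepsilon)$ of $1$ along a subsequence, since any sequence in a compact interval has a convergent subsequence and the phases can always be absorbed into $\lambda$. This yields $\overline{\orb}(\T x,T)\supseteq\{\lambda y\}$ for all $\lambda\in\T$, $y\in X$, hence $\overline{\orb}(\T x,T)=X$.

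The main obstacle is precisely this: $\Gamma$ may be a ``thin'' subset of the annulus (e.g.\ a single circle, or a Cantor-like set of radii), so one cannot select scalars of prescribed modulus from $\Gamma$. The whole difficulty is to arrange the bookkeeping of iterated approximations so that the \emph{products} of moduli drawn from $\Gamma$ (each only known to lie in $[r,R]$) can be steered arbitrarily close to any prescribed positive value — using compactness of $[r,R]$ and the freedom to take long products — while simultaneously keeping the iterated images $T^{n_k}x$ converging (in direction) to the desired target and keeping the error terms summable. Once that combinatorial/limiting scheme is in place, invoking Le\'on–M\"uller closes the proof; the rest is routine triangle-inequality estimates of the kind already illustrated in the necessity proofs above.
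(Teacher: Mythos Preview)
Your proposal has a genuine gap, and the paper's proof proceeds along completely different lines.

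First, a remark: your worry that ``$\Gamma$ may be a thin subset of the annulus'' is a non-issue. Since $\Gamma\setminus\{0\}\subset [r,R]\T$, any $\Gamma$-supercyclic vector is automatically $[r,R]\T$-supercyclic, so one may assume from the outset that $\Gamma$ is a full annulus $[1,b]\T$ (after rescaling). The paper makes exactly this reduction in one line.

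The real gap is in your iteration scheme. You claim that after $N$ steps the accumulated modular distortion stays in a fixed compact interval, and that ``the distortion can be pushed to lie within $(1-\varepsilon,1+\varepsilon)$ of $1$ along a subsequence, since any sequence in a compact interval has a convergent subsequence.'' But a sequence in a compact interval need not have $1$ as a limit point; nothing in your scheme forces the relevant moduli toward $1$. Concretely, what $[1,b]\T$-supercyclicity gives is only that for each $y$ there is \emph{some} $\mu_y\in[1,b]$ with $y\in\aorb(\mu_y\T x,T)$; iterating this on multiples of $y$ just gives you further points of $\R_+y$ in $\aorb(\T x,T)$, with no mechanism to show these accumulate at $y$ itself. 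Indeed, the paper isolates precisely the obstruction your argument cannot rule out: it can happen (a priori) that $\aorb(\T x,T)\cap\R_+y=\{c^ny:n\in\Z\}$ for a single fixed $c>1$, and in that rigid situation no amount of iterated approximation with scalars from $[1,b]$ will close the gap.

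What the paper actually does is quite different. After the reduction to $\Gamma=[1,b]\T$, it proves a dichotomy (Proposition~\ref{prop1-thm1}): either $x$ is hypercyclic, or there is a minimal $c\in(1,b]$ such that $x$ is $[1,c]\T$-supercyclic and $\aorb(\T x,T)\cap[1,c]\T x=\T x\cup c\T x$. It then shows this second alternative is impossible by a topological argument: one builds a continuous map $\Lambda:\text{span}\{\orb(x,T)\}\setminus\{0\}\to\T$ recording the unique $\lambda\in[1,c]$ with $y\in\aorb(\lambda\T x,T)$, and uses it to compute winding numbers of certain closed curves. A homotopy in the infinite-dimensional space $\text{span}\{\orb(x,T)\}\setminus\{0\}$ forces these winding numbers to be zero, while a direct computation gives $n_k\cdot\text{Ind}_0\gamma_{[x,Tx]}=1$ along an increasing sequence $n_k\to\infty$ --- a contradiction. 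Your sketch contains no analogue of this step, and I do not see how to complete your approach without it.
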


Let $T\in L(X)$ and let $x\in X$ be $\Gamma$-supercyclic for $T$. As it is clear that the point zero plays no role, in what follows we are going to suppose that $0\notin\Gamma$. Then, we notice that $\Gamma$ is included in some ring of the form $[a,b]\T$, with $0<a\leq b<+\infty$, and that $x$ is $[a,b]\T$-supercyclic whenever $x$ is $\Gamma$-supercyclic. In addition, up to a dilation, we see that $x$ is $[a,b]\T$-supercyclic if and only if $x$ is $[1,b/a]\T$-supercyclic. Therefore, to prove Theorem \ref{main-thm-sufficiency} we are reduced to prove the following

\begin{theorem}\label{main-thm-sufficiency-coronna}Let $T\in L(X)$ and $1\leq b < +\infty$. If $x$ is $[1,b]\T$-supercyclic for $T$ then $x$ is hypercyclic for $T$.
\end{theorem}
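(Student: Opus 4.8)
The plan is to follow the group-theoretic strategy that underlies the Le\'on--M\"uller theorem, but to replace the compact group $\T$ by the semigroup $[1,b]\T$, which is no longer a group since elements of modulus $>1$ are not invertible. The starting point is that $x$ is $[1,b]\T$-supercyclic means $\overline{\{\mu\lambda T^nx:\ \mu\in[1,b],\ |\lambda|=1,\ n\geq0\}}=X$. Equivalently, writing $S=T$, the set $\overline{\{zT^nx:\ 1\leq |z|\leq b,\ n\geq0\}}=X$. I would first record that $0\notin\overline{\text{Orb}}(x,T)$ is impossible unless the orbit is relatively compact in an annulus — more usefully, I would aim to show directly that $\overline{\text{Orb}}(x,T)=X$, i.e.\ that the modulus $1$ slice already does the job. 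The key point to exploit is that as $n$ grows one can absorb the extra dilation factor $\mu\in[1,b]$ by passing to a slightly larger power of $T$, provided one controls how $\|T^nx\|$ behaves; the bounded-away-from-zero and bounded hypotheses on $\Gamma$ (here encoded as $1\leq|z|\leq b$) are exactly what make this absorption uniform.

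The main steps, in order, would be: (1) Show that $\overline{\text{Orb}}(x,T)$ is $T$-invariant and that its closure contains, for every $y$ in it and every $n$, the point $T^ny$; more importantly, show that the \emph{set of scalars} $\Lambda=\{z\in\C:\ zy\in\overline{\text{Orb}}(\Gamma x,T)\text{ for some }y\text{ in a fixed dense set}\}$ is ``large''. (2) The heart of the argument: prove a multiplicative-saturation lemma saying that if $\overline{\{z T^nx\}}=X$ with $z$ ranging over the annulus $\{1\leq|z|\leq b\}$, then in fact the modulus can be driven down to $1$. Concretely, given $y\in X$ and $\varepsilon>0$, pick $z,n$ with $\|zT^nx-y\|<\varepsilon$; one wants to replace $z$ by a unimodular scalar. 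Since $|z|\in[1,b]$, write $|z|=b^{t}$ with $t\in[0,1]$; the idea is to use a \emph{second} hit $\|z'T^{n'}x - x'\|$ for a suitable target $x'$ and combine, the way Le\'on--M\"uller combine two approximations using the group law $\T\cdot\T=\T$. Here the semigroup law gives only $[1,b]\cdot[1,b]=[1,b^2]$, so one cannot contract moduli in one step. Instead I would iterate and use a compactness/pigeonhole argument on the moduli: along a sequence $n_k\to\infty$ the moduli $\mu_k\in[1,b]$ needed to approximate a fixed $y$ have a convergent subsequence $\mu_k\to\mu_\infty\in[1,b]$, and then one plays the approximations of $y$ and of $\mu_\infty^{-1}y$ against each other — the product of the two scalars lands in a controlled region, and iterating drives the modulus toward the boundary value $1$ (this is where one uses that $b<\infty$, so no ``escape to infinity'', and $\mu\geq1$, so no ``escape to zero''). (3) Once modulus $1$ is achieved, invoke the Le\'on--M\"uller theorem (stated in the excerpt): $\overline{\text{Orb}}(\T x,T)=X$ implies $x$ is hypercyclic for $T$. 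So the whole task reduces to proving the reduction $[1,b]\T\text{-supercyclic}\implies\T\text{-supercyclic}$.

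The step I expect to be the genuine obstacle is (2): turning the one-sided semigroup $[1,b]$ into the group $\T$. In the group case the identity $\lambda\mu^{-1}$ ranges over all of $\T$ and one gets the conclusion essentially for free; here the absence of inverses means the naive ``subtract two approximations'' trick fails, and one must instead run an iterative scheme that simultaneously approximates a whole finite orbit segment of targets $y, T y, T^2 y,\dots$ (or $y,\mu_\infty y,\mu_\infty^2 y,\dots$) and uses a Baire-category or successive-approximation argument to extract, in the limit, an approximation of $y$ by $\lambda T^m x$ with $|\lambda|=1$. Controlling the errors through this iteration — ensuring the accumulated dilation factors form a Cauchy product converging to a unimodular number rather than drifting to $b^\infty$ — is the delicate quantitative estimate; the hypotheses ``bounded'' and ``bounded away from $0$'' are used precisely to keep these products from degenerating, and a careful bookkeeping (reminiscent of the constructions in Section 2, run in reverse) should close the gap. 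I would structure this as a self-contained lemma before returning to deduce Theorem~\ref{main-thm-sufficiency-coronna}.
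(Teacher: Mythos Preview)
Your reduction strategy --- prove $[1,b]\T$-supercyclic $\Rightarrow$ $\T$-supercyclic, then invoke Le\'on--M\"uller --- matches the paper. The gap is entirely in your step~(2), and it is real. The iteration you outline cannot contract moduli: from $\mu_kT^{n_k}x\to y$ with $\mu_k\to\mu_\infty\in[1,b]$ you indeed get $\mu_\infty^{-1}y\in\aorb(x,T)$, but to reach $y$ itself with a \emph{unimodular} scalar you would have to manufacture a factor $\mu_\infty>1$, and every further approximation available to you contributes a modulus $\ge1$ (note $[1,b]$ is not even a multiplicative semigroup, products land in $[1,b^2]$). ``Playing $y$ against $\mu_\infty^{-1}y$'' is circular --- the second point is already in $\aorb(\T x,T)$, and combining with anything only pushes the modulus outward, never inward. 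No bookkeeping repairs this; what is missing is a structural reason why a nontrivial minimal modulus cannot persist, and that reason is not analytic.

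The paper supplies it by a topological argument entirely absent from your sketch. Assuming $x$ is not hypercyclic, a minimality argument (Proposition~\ref{prop1-thm1}) produces the smallest $c\in(1,b]$ with $x$ $[1,c]\T$-supercyclic, and at this $c$ one has the rigid intersection $\aorb(\T x,T)\cap[1,c]\T x=\T x\cup c\T x$. This rigidity propagates: for every nonzero $y\in\text{span}\{\orb(x,T)\}$ there is a unique $\lambda_y\in[1,c)$ with $\aorb(\T x,T)\cap\R_+y=\{c^n\lambda_y^{-1}y:n\in\Z\}$, and $y\mapsto\varphi(\lambda_y)$ (where $\varphi:[1,c]\to\T$ is the obvious identification) is a \emph{continuous} map $\Lambda$ into $\T$. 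Winding numbers of $\Lambda$ along closed curves in $\text{span}\{\orb(x,T)\}\setminus\{0\}$ now yield the contradiction: every such curve is nullhomotopic because the span is infinite-dimensional, so the winding number is always $0$; yet an explicit closed curve built from the segments $[T^jx,T^{j+1}x]$ for $0\le j<n$, a rotation arc at $T^nx$, a segment from $e^{i\theta}T^nx$ to $cx$, and the segment $[cx,x]$ has winding number $n\cdot\text{Ind}_0\gamma_{[x,Tx]}-1$ for $n$ running to infinity along a suitable subsequence. The missing idea, then, is not a sharper estimate on products of dilations but a homotopy invariant that detects the impossibility of the rigid ``lattice'' $\{c^n\}$ coexisting with infinite-dimensionality.
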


The proof of Theorem \ref{main-thm-sufficiency-coronna} relies on several lemmas.

\begin{lemma}\label{lemma0} Let $\Gamma\subset \mathbb{C}$ be a non-empty set bounded and bounded away from $0$, let $T\in L(X)$ and let $x$ be a $\Gamma$-supercyclic vector for $T$. Then for every $y\in X$, there exists $\gamma\in \overline{\Gamma}$ and an increasing sequence $(n_k)$ of integers such that $\gamma T^{n_k}x\to y$.
\end{lemma}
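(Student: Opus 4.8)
The goal is: given a $\Gamma$-supercyclic vector $x$ (with $\Gamma$ bounded and bounded away from $0$), every $y \in X$ can be approximated by $\gamma T^{n_k} x$ for a \emph{single} $\gamma$ lying in $\overline{\Gamma}$ and an increasing sequence $(n_k)$. By hypothesis, $\text{Orb}(\Gamma x, T)$ is dense, so there exist sequences $(\gamma_k) \subset \Gamma$ and integers $(p_k)$ with $\gamma_k T^{p_k} x \to y$. We may assume $y \neq 0$ (if $y = 0$ the statement is vacuous once we observe $\Gamma$-supercyclicity forces $x$ to have dense orbit in a sense that already gives sequences $\gamma_k T^{p_k} x \to 0$; alternatively just note $0 \in \overline{\text{Orb}}(\Gamma x,T)$ and extract). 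First I would record that the $p_k$ cannot stay bounded: if they did, then along a subsequence $T^{p_k} x$ is a fixed vector $v$, and $\gamma_k v \to y$ forces $\gamma_k \to y/v$ (when $v \neq 0$) or $y = 0$; in the former case $y$ lies in the one-dimensional subspace $\C v$, which together with density of $\text{Orb}(\Gamma x,T)$ and infinite-dimensionality of $X$ gives a contradiction (one cannot have a dense subset of an infinite-dimensional space with only finitely many ``base points'' $T^j x$ each scaled by the bounded set $\Gamma$). Hence, passing to a subsequence, $(p_k)$ is strictly increasing; rename it $(n_k)$.

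\textbf{The compactness step.} Since $\Gamma$ is bounded and bounded away from $0$, $\overline{\Gamma}$ is a compact subset of $\C \setminus \{0\}$. Thus the sequence $(\gamma_k)$, after a further extraction, converges to some $\gamma \in \overline{\Gamma}$ with $\gamma \neq 0$. Now I claim $\gamma T^{n_k} x \to y$ along this subsequence. Write
\[
\gamma T^{n_k} x - y = (\gamma - \gamma_k) T^{n_k} x + (\gamma_k T^{n_k} x - y).
\]
The second term tends to $0$ by construction. For the first term, $\gamma - \gamma_k \to 0$, so it suffices to know that $(T^{n_k} x)_k$ is bounded. This is the crux: a priori the orbit $(T^n x)$ need not be bounded. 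But here we use that $\gamma_k T^{n_k} x \to y$ with $|\gamma_k| \geq a > 0$: hence $\|T^{n_k} x\| = |\gamma_k|^{-1}\|\gamma_k T^{n_k} x\| \leq a^{-1} \sup_k \|\gamma_k T^{n_k} x\| < \infty$, since a convergent sequence is bounded. Therefore $\|(\gamma - \gamma_k) T^{n_k} x\| \leq |\gamma - \gamma_k| \cdot a^{-1} C \to 0$, and we conclude $\gamma T^{n_k} x \to y$.

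\textbf{Main obstacle.} The only genuine subtlety is the dichotomy on boundedness of $(p_k)$: one must rule out the degenerate case where the approximating indices are bounded, because in that case no \emph{increasing} sequence $(n_k)$ is available and the conclusion could fail (indeed for finite-dimensional $X$ it would fail, which is why the infinite-dimensionality of $X$ is needed here). So I would spend the bulk of the argument on this point: show that if, for a dense set of targets $y$, the approximating powers stayed bounded, then $\overline{\text{Orb}}(\Gamma x, T) \subseteq \bigcup_{j=0}^{N} \overline{\Gamma} T^j x$ for some $N$, a finite union of compact (hence closed, nowhere dense in infinite dimensions) sets, contradicting density in $X$. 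Concretely: fix any $y_0 \notin \C T^j x$ for all $j \le N$ — such $y_0$ exists since $X$ is infinite-dimensional and $\{T^j x : 0 \le j \le N\}$ spans a finite-dimensional subspace — then approximations of $y_0$ must use arbitrarily large powers, yielding the increasing sequence. Everything else (compactness extraction, the $\|T^{n_k}x\|$ bound from $|\gamma_k| \geq a$, and the three-term estimate) is routine. I expect the author's proof to handle the boundedness dichotomy perhaps more slickly, e.g.\ by first invoking that a $\Gamma$-supercyclic operator with $\Gamma$ bounded has unbounded orbit, but the structure above is the natural route.
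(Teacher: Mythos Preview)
Your compactness step (extract $\gamma_k\to\gamma\in\overline{\Gamma}$, bound $\|T^{n_k}x\|$ via $|\gamma_k|\ge a>0$, then the three-term estimate) is correct and is exactly what the paper does. The gap is in producing the \emph{increasing} sequence. You argue that if the powers $(p_k)$ stay bounded then $y\in\C T^m x$ for some $m$; but for a \emph{fixed} $y$ this is no contradiction---plenty of vectors (e.g.\ $y=T^5 x$ itself, or $y=\gamma_0 T^5 x$ for $\gamma_0\in\overline{\Gamma}$) do lie on such a line and admit bounded approximations like $p_k\equiv 5$. Your ``Main obstacle'' fix (``if for a dense set of targets the powers stay bounded then $X\subset\bigcup_{j\le N}\overline{\Gamma}T^j x$'') does not go through because the bound $N$ would depend on the target $y$, so no uniform $N$ and hence no such containment follows; and your ``concretely'' sentence only treats a $y_0$ avoiding \emph{finitely} many lines $\C T^j x$, which does not cover the case $y\in\bigcup_n\C T^n x$.

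The paper's route is precisely the Baire-type argument you gesture at but do not carry out for general $y$: since $\orb(\Gamma x,T)\subset\bigcup_n\C T^n x$ has empty interior in the infinite-dimensional $X$, one first approximates the given $y$ by a sequence $(y_k)$ lying \emph{outside} this set. For each such $y_k$ your own reasoning now applies verbatim (bounded powers would force $y_k\in\C T^m x$, this time a genuine contradiction), so the approximating indices for $y_k$ are unbounded; a diagonal extraction then yields a single increasing $(n_k)$ and $(\gamma_k)\subset\Gamma$ with $\gamma_k T^{n_k}x\to y$, after which your compactness step finishes the proof. An equivalent local shortcut: if no $\gamma T^n x$ with $n>N$ lay in $B(y,\varepsilon)$, then density would force $B(y,\varepsilon)\subset\overline{\bigcup_{n\le N}\Gamma T^n x}\subset\bigcup_{n\le N}\C T^n x$, impossible in infinite dimensions---this gives unbounded indices directly for every $y$.
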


\begin{proof}
We first remark that $\orb(\Gamma x,T)$ has an empty interior by the Baire Category Theorem. Let $y\in X$. We deduce from the above assertion that there exists a sequence $(y_k)\subset \aorb(\Gamma x,T)\backslash\orb(\Gamma x,T)$ converging to $y$. We remark that if $z\in \aorb(\Gamma x,T)\backslash\orb(\Gamma x,T)$ then there exist an increasing sequence $(n_k)\subset \N$ and $(\gamma_j)\subset \Gamma$ such that $\gamma_j T^{n_j}x\to z$. We can thus construct a sequence $(\gamma_k)\subset \Gamma$ and an increasing sequence $(n_k)$ such that $\|y_k-\gamma_kT^{n_k}x\|<2^{-k}$. By using the compactness of $\overline{\Gamma}$ and the fact that $\overline{\Gamma}$ is bounded away from $0$, we then obtain the desired result.
\end{proof}

We now deduce the following corollary which is an immediate consequence of the previous lemma.

\begin{corollary}\label{cor0} Let $\Gamma\subset \mathbb{C}$ be a non-empty set bounded and bounded away from $0$, let $T\in L(X)$ and let $x$ be a $\Gamma$-supercyclic vector for $T$. Then for every $n\geq 0$, we have
$$\bigcup_{\gamma\in \bar{\Gamma}}\aeorb(\gamma T^nx,T)=X.$$
\end{corollary}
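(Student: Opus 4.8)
The plan is to derive this directly from Lemma~\ref{lemma0} by ``shifting the time index'', so the corollary really is immediate. Fix $n\geq 0$. The inclusion $\bigcup_{\gamma\in\bar\Gamma}\aeorb(\gamma T^nx,T)\subset X$ is trivial, so it suffices to show that an arbitrary $y\in X$ lies in the left-hand side.

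First I would apply Lemma~\ref{lemma0} to the vector $y$: it provides some $\gamma\in\overline{\Gamma}$ and an increasing sequence of integers $(n_k)$ with $\gamma T^{n_k}x\to y$. The only thing to notice is that an increasing sequence of nonnegative integers tends to $+\infty$, so there is an index $K$ with $n_k\geq n$ for all $k\geq K$. Discarding the first terms, I may assume $n_k\geq n$ for every $k$, and then write $\gamma T^{n_k}x=T^{\,n_k-n}\bigl(\gamma T^nx\bigr)$, which exhibits each $\gamma T^{n_k}x$ as an element of $\eorb(\gamma T^nx,T)$.

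Since $\gamma T^{n_k}x\to y$, this yields $y\in\aeorb(\gamma T^nx,T)\subset\bigcup_{\gamma\in\bar\Gamma}\aeorb(\gamma T^nx,T)$, and as $y\in X$ was arbitrary the claimed equality follows. There is essentially no obstacle here: all the content sits in Lemma~\ref{lemma0}, and the present statement is merely the reformulation obtained from the fact that a tail of the orbit of $x$ under $T$ is the orbit of $T^nx$ under $T$.
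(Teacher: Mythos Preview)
Your proof is correct and follows exactly the approach the paper intends: the paper simply states that the corollary is ``an immediate consequence of the previous lemma'' without giving further details, and your argument is precisely the natural unpacking of that remark---use Lemma~\ref{lemma0} to get $\gamma\in\overline{\Gamma}$ and an increasing $(n_k)$ with $\gamma T^{n_k}x\to y$, then observe that eventually $n_k\ge n$ so that $\gamma T^{n_k}x\in\eorb(\gamma T^nx,T)$.
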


The following lemma is well-known in the case of hypercyclic operators, we provide here a slight generalization with its proof for the sake of completeness.

\begin{lemma}\label{lemma-spectre-vide}If $T$ is $\Gamma$-supercyclic with $\Gamma \subset \C$ non-empty, bounded and bounded away from $0$, then $p(T)$ has dense range for any nonzero polynomial $p$.
\end{lemma}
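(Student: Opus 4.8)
The plan is to adapt the classical argument for hypercyclic operators, where one shows $p(T)$ has dense range by arguing that otherwise $\overline{p(T)X}$ is a proper closed invariant subspace that would ``trap'' the orbit. The subtlety here is that we are dealing with $\Gamma$-supercyclicity rather than hypercyclicity, so the orbit we must control is $\mathrm{Orb}(\Gamma x,T)$, and we need to exploit that $\Gamma$ is bounded and bounded away from $0$.

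First I would suppose, for contradiction, that $p$ is a nonzero polynomial with $\overline{p(T)X}\neq X$. Set $Y=\overline{p(T)X}$; this is a closed, proper, $T$-invariant subspace of $X$ (invariance because $p(T)$ commutes with $T$). Since $x$ is $\Gamma$-supercyclic, $\mathrm{Orb}(\Gamma x,T)$ is dense in $X$, hence $p(T)\mathrm{Orb}(\Gamma x,T)$ is dense in $Y$. But $p(T)(\gamma T^n x)=\gamma T^n (p(T)x)$, so in fact $\mathrm{Orb}(\Gamma\, p(T)x, T)$ is dense in $Y$; that is, $p(T)x$ is a $\Gamma$-supercyclic vector for the restriction $T|_Y$, provided $p(T)x\neq 0$. (If $p(T)x=0$ then $x\in\Ker p(T)$, which is a proper closed $T$-invariant subspace; but then $\mathrm{Orb}(\Gamma x,T)\subset \Ker p(T)\neq X$, contradicting density directly — so this degenerate case is immediate.) Thus $T|_Y$ is $\Gamma$-supercyclic on the infinite-dimensional Banach space $Y$; note $Y$ is indeed infinite-dimensional since $p(T)$ is injective modulo its kernel considerations — more carefully, if $Y$ were finite-dimensional then $X/\,\cdot\,$ arguments or directly the fact that a $\Gamma$-supercyclic operator on a nonzero space forces that space to be infinite-dimensional (an orbit $\{\gamma T^n z\}$ with $\gamma\in\Gamma$ bounded and bounded away from $0$ can be dense only in an infinite-dimensional space, since in finite dimensions such a set, being contained in a bounded-modulus-ratio cone of the orbit, cannot be dense — this already uses boundedness of $\Gamma$).

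The key point is then to derive a contradiction from $Y\subsetneq X$. I would use the supercyclicity structure to find a point of the orbit close to some $z_0\in X\setminus Y$ with $\mathrm{dist}(z_0,Y)>0$: by density of $\mathrm{Orb}(\Gamma x,T)$ there is $\gamma\in\Gamma$, $n\ge 0$ with $\|\gamma T^n x - z_0\|$ small, so $\gamma T^n x \notin Y$. On the other hand, applying $p(T)$, the vector $p(T)(\gamma T^n x)=\gamma T^n p(T)x \in Y$; iterating, $T^m p(T)x\in Y$ for all $m$, and since $p(T)x$ generates (via $\Gamma$ and $T$) a dense subset of $Y$, we stay inside $Y$ — this is consistent, not contradictory. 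The real contradiction comes the other way: I would instead run the standard argument showing $\Ker p(T^*)$-type obstructions, or more cleanly, observe that $X=\overline{\mathrm{Orb}(\Gamma x,T)}$ and apply $p(T)$ to get $p(T)X\subset \overline{p(T)\,\mathrm{Orb}(\Gamma x,T)}=\overline{\mathrm{Orb}(\Gamma\, p(T)x,T)}$, and since $p(T)x\in Y$, the latter orbit lies in $Y$, giving $p(T)X\subset Y$ as expected — so to get the contradiction I must show density of $\mathrm{Orb}(\Gamma\, p(T)x, T)$ in all of $X$, not just $Y$. This follows from Corollary~\ref{cor0} applied with the roles suitably arranged, or directly: since $p(T)$ has dense range would be what we want, the honest route is the quotient argument — pass to $X/Y$, where $\bar T$ induced by $T$ is well-defined, $\bar x$ is $\Gamma$-supercyclic for $\bar T$, yet $\overline{p(T)x}=\bar 0$ in $X/Y$ so $p(\bar T)\bar x = 0$, meaning $\mathrm{Orb}(\Gamma \bar x,\bar T)$ is annihilated by $p(\bar T)$ and hence contained in $\Ker p(\bar T)$; density then forces $\Ker p(\bar T)=X/Y$, so $p(\bar T)=0$ on $X/Y$, i.e. $p(T)X\subset Y=\overline{p(T)X}$, which is automatic and yields no contradiction unless we use dimension.

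The main obstacle, and where I would concentrate effort, is precisely closing this loop: the clean contradiction is obtained by noting that $p(\bar T)=0$ on the quotient $X/Y$ means $X/Y$ is a space on which a polynomial in a single operator vanishes, forcing $\sigma(\bar T)\subset p^{-1}(0)$, a finite set; combined with $\bar T$ being $\Gamma$-supercyclic on $X/Y$ (which, as noted, forces $X/Y$ infinite-dimensional and $\sigma(\bar T)$ to be non-trivial in a way incompatible with being finite — indeed a $\Gamma$-supercyclic operator with $\Gamma$ bounded and bounded away from $0$ cannot have $0$ as an isolated eigenvalue situation, and more to the point, by Lemma~\ref{lemma-spectre-vide}'s companion facts or by a direct spectral argument, an operator on an infinite-dimensional space whose spectrum is finite cannot be $\Gamma$-supercyclic since the corresponding spectral decomposition would confine the orbit). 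So the structure is: (1) reduce to the quotient $X/Y$; (2) show $\bar x$ is $\Gamma$-supercyclic for $\bar T$ on $X/Y$; (3) show $X/Y$ is infinite-dimensional; (4) deduce $p(\bar T)=0$, hence $\bar T$ algebraic with finite spectrum; (5) reach a contradiction with $\Gamma$-supercyclicity on an infinite-dimensional space via a spectral/Riesz-decomposition argument. Step (5) is the crux and the place where boundedness and bounded-away-from-$0$ of $\Gamma$ genuinely enters, since without the modulus control one could not rule out, e.g., weighted shift type behavior.
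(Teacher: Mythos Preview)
Your proposal never arrives at a proof: you correctly set up the quotient $X/Y$ and observe $p(\bar T)=0$, but you explicitly leave ``the crux'' (step~(5)) open, gesturing at a ``spectral/Riesz-decomposition argument'' without carrying it out. The claim in step~(3) that $X/Y$ must be infinite-dimensional is neither proved nor true in general, and your aside invoking ``Lemma~\ref{lemma-spectre-vide}'s companion facts'' is circular. More importantly, if you try to finish step~(5) you are driven straight back to the paper's argument: from $p(\bar T)=0$ one factors $p$ and finds an eigenvalue of $\bar T^{*}=T^{*}|_{Y^{\perp}}$, hence an eigenvalue of $T^{*}$, and then one must still show that $\sigma_p(T^{*})\neq\emptyset$ is incompatible with $\Gamma$-supercyclicity when $\Gamma$ is bounded and bounded away from~$0$.

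The paper bypasses the entire quotient apparatus by using at the outset the standard equivalence ``$p(T)$ has dense range for every nonzero $p$'' $\Longleftrightarrow$ ``$\sigma_p(T^{*})=\emptyset$'', and then proving the right-hand side in three lines: if $T^{*}y^{*}=\alpha y^{*}$ with $y^{*}\neq 0$, then pairing with $y^{*}$ gives
\[
\C=\overline{\{\gamma\alpha^{n}:\gamma\in\Gamma,\ n\ge 0\}}\,\langle x,y^{*}\rangle,
\]
which is bounded if $|\alpha|\le 1$ (since $\Gamma$ is bounded) and bounded away from $0$ if $|\alpha|>1$ and $\langle x,y^{*}\rangle\neq 0$ (since $\Gamma$ is bounded away from $0$) --- contradiction either way. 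This is precisely the ``modulus control'' you mention as the place where the hypotheses on $\Gamma$ genuinely enter; the paper simply uses it directly rather than after a detour through quotients.
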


\begin{proof}It is a well-known fact that it suffices to prove that the point spectrum $\sigma _p(T^*)$ of the adjoint $T^*$ of $T$ is empty. Let $x$ be a $\Gamma$-supercyclic vector for $T$. By contradiction, assume that $\alpha \in \sigma _p(T^*)$ and let $y^* \in X^*\setminus\{0\}$ be such that $T^*y^*=\alpha y^*$. Since $x$ is $\Gamma$-supercyclic for $T$, we get
$$\C  =  \overline{\left\{\langle\gamma T^nx,y^*\rangle:\,\gamma \in \Gamma,\,n\geq 0\right\}} =  \overline{\left\{\gamma \alpha ^n:\,\gamma \in \Gamma,\,n\geq 0\right\}}\langle x,y^*\rangle.$$
If $|\alpha| \leq 1$ or $\langle x,y^*\rangle =0$ then the last set is bounded and cannot be dense in $\C$; if $|\alpha|>1$ and $\langle x,y^*\rangle \neq 0$ then it is bounded away from $0$, hence a contradiction.
\end{proof}

From this lemma, we can deduce the following one which shows that as soon as $T$ is $\Gamma$-supercyclic then the set of $\Gamma$-supercyclic vectors contains a particular dense linear subspace apart from zero. 

\begin{lemma}\label{lemma-P(T)}If $x$ is $\Gamma$-supercyclic for $T$ with $\Gamma \subset \C$ non-empty, bounded and bounded away from $0$, then $p(T)x$ is $\Gamma$-supercyclic for $T$ for every nonzero polynomial $p$.
\end{lemma}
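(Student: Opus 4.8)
The plan is to mimic the classical argument showing that for a hypercyclic operator $T$, the vector $p(T)x$ is again hypercyclic whenever $x$ is hypercyclic and $p$ is a nonzero polynomial, but now adapted to the $\Gamma$-supercyclic setting using the previously established lemmas. The crucial inputs are Lemma~\ref{lemma-spectre-vide}, which tells us $p(T)$ has dense range, and Corollary~\ref{cor0}, which tells us that the union over $\gamma \in \bar\Gamma$ of the closed orbits $\aeorb(\gamma T^n x, T)$ covers all of $X$ for every $n \geq 0$. I would also want to keep track of the boundedness/bounded-away-from-$0$ hypothesis on $\Gamma$ so that a compactness argument on $\bar\Gamma$ can be run at the end.

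First I would fix a nonzero polynomial $p$ and set $z = p(T)x$; I want to show $\orb(\Gamma z, T)$ is dense, i.e. given $y \in X$ and $\varepsilon > 0$, I must find $\gamma \in \Gamma$ and $n \geq 0$ with $\|\gamma T^n p(T) x - y\| < \varepsilon$. Since $p(T)$ has dense range by Lemma~\ref{lemma-spectre-vide}, pick $u \in X$ with $\|p(T)u - y\| < \varepsilon/2$. Now apply Corollary~\ref{cor0} (with $n=0$, say) to the $\Gamma$-supercyclic vector $x$: there exist $\gamma_0 \in \bar\Gamma$ and an increasing sequence $(n_k)$ with $\gamma_0 T^{n_k} x \to u$. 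Applying the bounded operator $p(T)$, which commutes with $T$, gives $\gamma_0 T^{n_k} p(T) x = p(T)(\gamma_0 T^{n_k} x) \to p(T) u$, hence $\gamma_0 T^{n_k} p(T) x \to p(T)u$ and so $\|\gamma_0 T^{n_k} p(T) x - y\| < \varepsilon/2$ for large $k$. The remaining issue is that $\gamma_0$ lies in $\bar\Gamma$ rather than $\Gamma$; but since $\Gamma$ is bounded away from $0$ and the sequence $\gamma_0 T^{n_k} p(T) x$ is converging, I can perturb $\gamma_0$ slightly to a genuine element $\gamma \in \Gamma$ with $|\gamma - \gamma_0|$ small, and because $\|\gamma_0 T^{n_k} p(T) x\|$ stays bounded (it converges), $\|(\gamma - \gamma_0) T^{n_k} p(T) x\|$ can be made $< \varepsilon/4$, giving the claim with $\gamma \in \Gamma$ and $n = n_k$.

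The main obstacle I anticipate is precisely this last approximation step: one needs $\bar\Gamma \setminus \Gamma$ to be approachable from within $\Gamma$ in a controlled way, and one needs the relevant orbit vectors to have bounded norm so that a small change in the scalar produces a small change in the product. Both are handled by the standing hypotheses ($\Gamma$ bounded away from $0$ and the convergence of the orbit subsequence forcing boundedness), so this is really a routine $\varepsilon/2$-plus-$\varepsilon/2$ argument rather than a genuine difficulty; the real content has already been extracted into Lemma~\ref{lemma-spectre-vide} and Corollary~\ref{cor0}. An alternative, cleaner route avoids the perturbation entirely: work directly with Lemma~\ref{lemma0}, which already produces $\gamma \in \bar\Gamma$ and $(n_k)$ with $\gamma T^{n_k} x \to u$, and then observe that density of $\orb(\Gamma z, T)$ only needs to be checked on a dense set of targets $y$; combined with the density of the range of $p(T)$ one reduces to targets of the form $p(T)u$ with $u$ itself ranging over a set on which the approximation $\gamma_0 T^{n_k} x \to u$ can be taken with $\gamma_0 \in \Gamma$. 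Either way, the conclusion is that $p(T)x$ is $\Gamma$-supercyclic for $T$, and since this holds for every nonzero polynomial $p$, the set of $\Gamma$-supercyclic vectors contains the dense subspace $\{p(T)x : p \text{ a nonzero polynomial}\} \cup \{0\}$, which is what the statement asserts.
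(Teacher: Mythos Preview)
Your argument is correct, and the core idea---use that $p(T)$ has dense range (Lemma~\ref{lemma-spectre-vide}) together with the commutation $p(T)T^n = T^n p(T)$---is exactly the paper's approach. However, you have introduced an unnecessary detour: by invoking Corollary~\ref{cor0} (or Lemma~\ref{lemma0}) you land in $\bar\Gamma$ rather than $\Gamma$, and then have to run a perturbation argument to get back into $\Gamma$. This is avoidable. The definition of $\Gamma$-supercyclicity already gives $\aorb(\Gamma x,T)=X$ with $\Gamma$ itself, not its closure; the paper simply observes that, by continuity of $p(T)$ and commutation with $T$,
\[
\aorb(\Gamma\, p(T)x,T)\ \supset\ p(T)\bigl(\aorb(\Gamma x,T)\bigr)=p(T)(X),
\]
which is dense by Lemma~\ref{lemma-spectre-vide}. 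That is the whole proof---no $\bar\Gamma$ vs.\ $\Gamma$ issue ever arises, and neither Lemma~\ref{lemma0} nor Corollary~\ref{cor0} is needed here.
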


\begin{proof} Since  $x$ is $\Gamma$-supercyclic for $T$, it follows from Lemma~\ref{lemma-spectre-vide} that $p(T)(\aorb(\Gamma x,T))$ is dense and we get the desired result by remarking that $\aorb(\Gamma p(T)x,T)\supset p(T)(\aorb(\Gamma x,T))$.
\end{proof}

We will now assume as in Theorem \ref{main-thm-sufficiency-coronna} that $\Gamma =[1,b]\T$ for some $b\geq 1$. The following result aims to divide $\Gamma$-supercyclic operators into two categories: the hypercyclic ones and the non-hypercyclic ones. Moreover, it gives some necessary properties satisfied by the non-hypercyclic ones.   

\begin{proposition}\label{prop1-thm1}Let $b\geq 1$. If $x$ is $[1,b]\T$-supercyclic then one of the two following conditions holds:
\begin{enumerate}\item $x$ is hypercyclic for $T$;
\item There exists $1<c\leq b$ such that $x$ is $[1,c]\T$-supercyclic but $\aeorb(\T x,T)\cap [1,c]\T x=\T x\cup c\T x$. In particular, $T$ is not hypercyclic.
\end{enumerate}
\end{proposition}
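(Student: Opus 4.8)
The plan is to analyze the structure of $A_x := \aeorb(\T x,T)$, the closure of the orbit of the circle $\T x$ under $T$. The key point is that $A_x$ is a $T$-invariant, $\T$-invariant (i.e. rotation-invariant), closed subset of $X$. By the Le\'on--M\"uller Theorem, $x$ is hypercyclic for $T$ if and only if $A_x = X$, so case (1) corresponds to $A_x = X$ and in case (2) we must have $A_x \neq X$, hence $\aeorb(\T x, T)$ is nowhere dense (it cannot be somewhere dense without being everything, by the Bourdon--Feldman-type theorem of Bayart--Matheron quoted in the introduction). First I would record that, since $x$ is $[1,b]\T$-supercyclic, Corollary~\ref{cor0} gives $\bigcup_{\gamma\in[1,b]}\aeorb(\gamma T^n x,T)=X$ for every $n$; combined with rotation-invariance this reads $\bigcup_{t\in[1,b]} t A_x = X$, i.e. the ``dilated cone'' $[1,b]A_x$ fills $X$. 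So if $A_x=X$ we are in case (1) and done; assume henceforth $A_x\neq X$.

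Next I would introduce $D := \{ t > 0 : t A_x \subset A_x\}$ — more precisely, I want to understand for which scalars $t$ the set $t\,\T x$ lies in $A_x$. Set $S := \{ t\ge 1 : t\T x \subset A_x\}$. Note $1\in S$, and $S\subset[1,b]$ because $A_x\neq X$ forces $[1,b]A_x = X$ to be as economical as possible; more carefully, if $S$ contained arbitrarily large values one could iterate dilations and, using $[1,b]A_x=X$, deduce $A_x=X$, a contradiction. The crucial structural fact is that $S$ is closed under ``near-multiplication'': if $s,t\in S$ then $st \in S$ provided $st\le b$ — this is because $t\T x\subset A_x$ implies (applying the $\T$-invariant closed $T$-invariant structure and $\Gamma$-supercyclicity of $t x$ via Lemma~\ref{lemma-P(T)}-type reasoning, or directly via Lemma~\ref{lemma0}) that $s\,t\,\T x\subset A_x$ as well. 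Also $S$ is closed (as $A_x$ is closed and the map $t\mapsto tx$ is continuous). Now set $c := \sup S$. Then $c\in S$ (closedness), $1\le c\le b$, and I claim $c>1$: if $c=1$, then $A_x\cap [1,b]\T x = \T x$, but combined with $[1,b]A_x = X$ — no, this needs care — instead, if $c = 1$ then one shows $A_x$ is bounded away from the dilates it should reach; more directly, $[1,b]\T$-supercyclicity and Lemma~\ref{lemma0} force that for generic $y$ the approximating scalar $\gamma\in[1,b]\overline\T$, and a compactness/iteration argument shows $c=1$ would make $A_x$ fail to satisfy $[1,b]A_x=X$. Granting $c>1$, the definition of $c$ as $\sup S$ and the multiplicativity of $S$ give precisely $A_x\cap[1,c]\T x = \T x\cup c\T x$: any $t\in(1,c)$ with $t\in S$ would, by the near-multiplicativity, generate the whole interval $[1,c]$ inside $S$ (since powers of such $t$, truncated at $c$, are dense-then-closed), contradicting that the intersection has the claimed form — so I must arrange things so that in fact $S\cap(1,c) = \emptyset$, which is the content of ``$\aeorb(\T x,T)\cap[1,c]\T x = \T x\cup c\T x$''. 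Finally, $x$ being $[1,c]\T$-supercyclic follows because $[1,c]A_x = X$: one checks that shrinking $[1,b]$ to $[1,c]$ still suffices using that $c A_x\subset A_x$... actually one shows $[1,c]\,\overline{\text{Orb}}([1,c]\T x, T) = X$ directly from $A_x$ being $c$-self-similar in the relevant sense.

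The main obstacle, as I see it, is establishing the dichotomy cleanly — specifically, proving that $S$ (equivalently, the set of dilation factors $t$ with $t\T x\subset A_x$) is genuinely a discrete-looking set of the form $\{1,c\}\cap[1,c]$ with $c>1$ rather than a whole subinterval, and simultaneously that $c\le b$ and that $x$ remains $[1,c]\T$-supercyclic. The tension is that $A_x$ rotation-invariant and $T$-invariant with $[1,b]A_x = X$ is a strong constraint, but extracting from it the exact numerology $\T x\cup c\T x$ requires showing that if any ``intermediate'' circle $t\T x$ with $1<t<c$ sat inside $A_x$ then, by re-running $\Gamma$-supercyclicity of the vector $tx$ (legitimate since $tx$ generates the same $A_x$ up to dilation) and the self-reinforcing multiplicativity, one would fill in the whole segment and ultimately get $A_x = X$ — contradicting case (2). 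I expect this multiplicativity/closure argument for $S$, together with the boundary case $c=1$ ruling-out, to be the technical heart; the rest (invariance of $A_x$, application of Le\'on--M\"uller and of Lemma~\ref{lemma0}) is routine.
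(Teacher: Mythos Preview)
Your proposal has a genuine error in the definition of $c$. You set $S=\{t\ge 1: t\T x\subset A_x\}$ and then $c:=\sup S$, claiming $S\subset[1,b]$. But the multiplicativity you observe holds without the proviso ``provided $st\le b$'': if $t\in S$ then $t\T x\subset A_x$, and since $A_x$ is closed and $T$-invariant this forces $tA_x\subset A_x$; hence $s,t\in S$ implies $st\in S$ unconditionally. Consequently $S$ is a closed multiplicative sub-semigroup of $[1,\infty)$, so either $S=\{1\}$ or $S$ is unbounded. In the situation of case~(2) the set $S$ contains $c_{\text{true}}^n$ for all $n\ge 0$, so $\sup S=+\infty$ and your $c$ is not a real number. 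Your justification that ``if $S$ contained arbitrarily large values one could\ldots deduce $A_x=X$'' does not go through: from $tA_x\subset A_x$ for large $t$ and $[1,b]A_x=X$ one cannot conclude $A_x=X$ (the inclusions point the wrong way).

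The paper avoids this by defining $c$ differently, namely
\[
c:=\inf\{\lambda\in[1,b]:\ x\ \text{is}\ [1,\lambda]\T\text{-supercyclic}\},
\]
and proving two claims: (i) if $\lambda T^nx\in A_x$ for some $\lambda\in(1,b]$ and some $n$, then $x$ is $[1,\lambda]\T$-supercyclic; (ii) the set of such $\lambda$ is closed from above, so $x$ is $[1,c]\T$-supercyclic. Claim~(i) immediately gives $(1,c)\cap S=\emptyset$, and $[1,c]\T$-supercyclicity forces $cx\in A_x$ (otherwise a whole segment $(x,(c+\varepsilon)x]$ would miss $A_x$, so $(c+\varepsilon)x\notin\aorb([1,c]\T x,T)$, contradicting $[1,c]\T$-supercyclicity). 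Thus the paper's $c$ equals $\inf(S\setminus\{1\})$, not $\sup S$. If you want to salvage your approach you should take $c:=\inf(S\setminus\{1\})$, but then you still need an independent argument that $S\ne\{1\}$ and that $x$ is $[1,c]\T$-supercyclic; the cleanest route to both is precisely the paper's Claim~(i), which links membership in $S$ to the supercyclicity range. Finally, you do not address the ``In particular, $T$ is not hypercyclic'' clause; the paper handles this by noting that a hypercyclic vector $y$ would lie in some $\aorb(\lambda\T x,T)$ with $\lambda\in[1,c]$, whence $X=\aorb(y,T)\subset\aorb(\lambda\T x,T)$ and $x$ itself would be hypercyclic.
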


\begin{proof}We will need two claims.
\begin{fait}\label{claim2-lemma1}If there exist $\lambda\in ]1,b]$ and $n\ge 0$ such that $\lambda T^nx\in \overline{\text{Orb}}(\T x,T)$, then $x$ is $[1,\lambda]\T$-supercyclic for $T$.
\end{fait}

\noindent{}\emph{Proof of Claim \ref{claim2-lemma1}.} If $\lambda T^nx\in \overline{\text{Orb}}(\T x,T)$ then there exists $(n_k)\subset \N$ and $\gamma \in \T$ such that $\gamma T^{n_k}x\to \lambda T^nx$. It follows that for every $m\in \N$ and every $\mu >0$,
$$\frac{\mu \gamma }{\lambda}T^{n_k+m}x\to \mu T^{n+m}x$$
and thus
\begin{equation}\label{eq1-b-claim1-thm1}\aorb(\mu\T T^{n+m} x,T)\subset \aorb(\frac{\mu}{\lambda}\T T^{m} x,T).
\end{equation}

Let now $J\in \N$ be such that $(1/\lambda)^J\leq \lambda/b$. Then for every $\mu \in [1,b]$, there exists $0\leq j_{\mu}\leq J$ such that $ \frac{\mu}{\lambda^{j_{\mu}}}\in [1,\lambda]$. Thus it follows from (\ref{eq1-b-claim1-thm1}) that
$$\aorb(\mu \T T^{Jn} x,T)\subset \aorb(\mu \T T^{j_{\mu}n}  x,T) \subset \aorb(\frac{\mu}{\lambda^{j_{\mu}}} \T x,T).$$
Using Corollary \ref{cor0} we get
$$X=\bigcup_{\mu\in [1,b]}\aorb(\mu \T T^{Jn}  x,T)\subset \bigcup _{\nu\in [1,\lambda]}\aorb(\nu \T x,T).$$

\medskip{}
\begin{fait}\label{claim3-lemma1}Let $1\leq c < b$. If $x$ is $[1,\lambda]\T$-supercyclic for every $\lambda\in ]c,b]$ then $x$ is $[1,c]\T$-supercyclic.
\end{fait}

\noindent{}\emph{Proof of Claim~\ref{claim3-lemma1}.} Let $y\in X$ and $c<b$. Since $x$ is $[1,c+1/k]\T$-supercyclic for every $k$ large enough, there exists $\mu_k\in [1,c+1/k]\T$,  and $n_k$ such that
\[\|\mu_kT^{n_k}x-y\|\le \frac{1}{k}.\]
Up to take a subsequence, we may assume that $\mu_k\to \mu$ for some $\mu\in [1,c]\T$. We deduce that $\mu T^{n_k}x\to y$ and thus that $x$ is $[1,c]\T$-supercyclic.

\medskip{}
We now finish the proof of Proposition \ref{prop1-thm1}. Set
$$c=\inf\{\lambda\in [1,b]:\text{$x$ is $[1,\lambda]\T$-supercyclic}\}.$$
If $x$ is not hypercyclic, then $c>1$, because if not Claim~\ref{claim3-lemma1} implies that $x$ is $\T$-supercyclic hence hypercyclic, by Le\'on-M\"uller Theorem. Then, first, we deduce from Claim~\ref{claim2-lemma1} that for every $\lambda \in ]1,c[$, $\lambda x \notin \aorb(\T x,T)$. Moreover $cx$ must belong to $\aorb(\T x,T)$ because if not there exists $\varepsilon >0$ such that the interval $]x,(c+\varepsilon)x]$ is included in the complement of $\aorb(\T x,T)$ and then $(c+\varepsilon)x\notin \aorb([1,c]\T x,T)$, what contradicts the fact that $x$ is $[1,c]\T$-supercyclic. Thus, we have $\aorb(\T x,T)\cap [1,c]\T x=\T x\cup c\T x$.

Finally if we are in such a case, then $T$ fails to be hypercyclic. Indeed if $y\in X$ is hypercyclic for $T$ then, according to Corollary \ref{cor0}, there exists $\lambda \in [1,c]$ such that $y\in\aorb(\lambda\T x, T)$. From this and the hypercyclicity of $y$, we deduce that
$$X=\aorb(y,T)\subset \aorb(\lambda \T x,T).$$
By Le\'on-M\"uller Theorem again, $\lambda x$ would then be hypercyclic for $T$ and thus $x$ would be hypercyclic for $T$.

\end{proof}

The remaining of the proof will consist in showing that there cannot exist an operator $T\in L(X)$ admitting a $[1,b]\T$-supercyclic vector $x$ with $b>1$ such that
$$\aorb(\T x,T)\cap [1,b]\T =\T x \cup b\T x.$$
This will conclude the proof of the sufficiency part of Theorem~A in view of the previous result.
To do so we will show that if such an operator $T$ exists then we can build a certain continuous function $\Lambda$ from $\text{span}\{\orb(x,T)\}\setminus \{0\}$ into $\T$ inducing an homotopy in $\T$ between a single point and a closed path having nonzero winding number around $0$, what is known to be impossible. The construction of this continuous function $\Lambda$ will rely for any $y\in \text{span}\{\orb(x,T)\}\setminus \{0\}$ on the existence of a unique parameter $\lambda_y\in [1,b[$ such that $y\in\overline{\text{Orb}}(\lambda_y\T x,T)$. The existence of this parameter will be obtained for every $[1,b]$-supercyclic vector and thus for every element in $\text{span}\{\orb(x,T)\}\setminus \{0\}$ in view of Lemma \ref{lemma-P(T)}.

This will be done thanks to the following lemmas which help to understand how the orbit of $x$ approaches real multiples of a fixed $[1,b]$-supercyclic vector $y$.
We first remark that if $x$ is a $[1,b]\T$-supercyclic vector satisfying $\aorb(\T x,T)\cap [1,b]\T x=\T x \cup b\T x$ then every $[1,b]\T$-supercyclic vector satisfies this property.

\begin{lemma}\label{lemma4-thm1}If $x$ is $[1,b]\T$-supercyclic for $T$ but $\aeorb(\T x,T)\cap [1,b]\T x=\T x \cup b\T x$ then for every $[1,b]\T$-supercyclic vector $y$, we have $\aeorb(\T y,T)\cap [1,b]\T y=\T y \cup b\T y$.
\end{lemma}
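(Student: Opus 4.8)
The plan is to show that the rigid structure the hypothesis imposes on $x$ is shared by every $[1,b]\T$-supercyclic vector, using only Proposition~\ref{prop1-thm1} and Corollary~\ref{cor0}; no new ingredient seems necessary. We may assume $b>1$, the case $b=1$ being trivial.

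First I would check that $T$ cannot be hypercyclic. Since $b>1$, the set $[1,b]\T x$ strictly contains $\T x\cup b\T x$ (for instance $\tfrac{1+b}{2}x$ belongs to the former but not to the latter), so the assumption $\aorb(\T x,T)\cap[1,b]\T x=\T x\cup b\T x$ already gives $\aorb(\T x,T)\neq X$. If some $z$ were hypercyclic for $T$, then Corollary~\ref{cor0} applied to $x$ with $n=0$ would provide $\nu\in[1,b]$ with $z\in\aorb(\nu\T x,T)$; since $\aorb(\nu\T x,T)$ is closed and $T$-invariant and contains $z$, it would contain $\aorb(z,T)=X$, that is $\aorb(\T x,T)=\tfrac{1}{\nu}X=X$, a contradiction. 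Hence $T$ is not hypercyclic, so no $[1,b]\T$-supercyclic vector is hypercyclic.

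Next, fix a $[1,b]\T$-supercyclic vector $y$. It is not hypercyclic, so Proposition~\ref{prop1-thm1} places it in case~(2): there is $c_y\in\,]1,b]$ such that $y$ is $[1,c_y]\T$-supercyclic and $\aorb(\T y,T)\cap[1,c_y]\T y=\T y\cup c_y\T y$; it remains to prove $c_y=b$. Suppose $c_y<b$. By Corollary~\ref{cor0} applied to $x$ with $n=0$ there is $\nu\in[1,b]$ with $y\in\aorb(\nu\T x,T)$; since $\aorb(\nu\T x,T)$ is closed, $T$-invariant, and invariant under multiplication by $\T$, it contains $\aorb(\T y,T)$, and after dilation $\aorb(\lambda\T y,T)\subseteq\aorb(\lambda\nu\T x,T)$ for every $\lambda>0$. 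Applying Corollary~\ref{cor0} to $y$ (which is $[1,c_y]\T$-supercyclic) with $n=0$, together with the inclusion $\aeorb(\mu\omega y,T)\subseteq\aorb(\mu\T y,T)$ for $\mu>0$ and $\omega\in\T$, we obtain $X=\bigcup_{\lambda\in[1,c_y]}\aorb(\lambda\T y,T)$; combining,
\[X=\bigcup_{\lambda\in[1,c_y]}\aorb(\lambda\T y,T)\subseteq\bigcup_{\lambda\in[1,c_y]}\aorb(\lambda\nu\T x,T)=\nu\bigcup_{\lambda\in[1,c_y]}\aorb(\lambda\T x,T),\]
so $\bigcup_{\lambda\in[1,c_y]}\aorb(\lambda\T x,T)=X$, which forces $\aorb([1,c_y]\T x,T)=X$, i.e. $x$ is $[1,c_y]\T$-supercyclic. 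Applying Proposition~\ref{prop1-thm1} once more, this time to $x$ viewed as a $[1,c_y]\T$-supercyclic vector, and recalling that $x$ is not hypercyclic, we are again in case~(2): there is $c'\in\,]1,c_y]$ with $\aorb(\T x,T)\cap[1,c']\T x=\T x\cup c'\T x$. But $c'\le c_y<b$, so intersecting the hypothesis with $[1,c']\T x$ gives $\aorb(\T x,T)\cap[1,c']\T x=(\T x\cup b\T x)\cap[1,c']\T x=\T x$, forcing $c'\T x\subseteq\T x$, which is absurd since $c'>1$. This contradiction yields $c_y=b$, and then the description above reads $\aorb(\T y,T)\cap[1,b]\T y=\T y\cup b\T y$, as wanted.

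The main obstacle is bookkeeping rather than ideas: one has to justify that the closures of $\T$-orbits are $T$-invariant and invariant under multiplication by $\T$ (so that $y\in\aorb(\nu\T x,T)$ propagates to $\aorb(\T y,T)\subseteq\aorb(\nu\T x,T)$), that multiplying a vector by $\nu>0$ multiplies all of its orbit closures by $\nu$, and that $\bigcup_{\lambda\in[1,c_y]}\aorb(\lambda\T x,T)=X$ entails $x$ being $[1,c_y]\T$-supercyclic (only the easy inclusion $\overline{\bigcup_i A_i}\supseteq\bigcup_i\overline{A_i}$ is used). The single substantive input is Proposition~\ref{prop1-thm1} itself, which attaches to every non-hypercyclic $[1,b]\T$-supercyclic vector $v$ a critical radius $c_v\in\,]1,b]$ with $\aorb(\T v,T)\cap[1,c_v]\T v=\T v\cup c_v\T v$; the lemma then amounts to saying that this radius is always equal to $b$.
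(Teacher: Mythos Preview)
Your proof is correct and follows essentially the same route as the paper's: apply Proposition~\ref{prop1-thm1} to $y$ to obtain some $c_y\in\,]1,b]$, use Corollary~\ref{cor0} to find $\nu\in[1,b]$ with $y\in\aorb(\nu\T x,T)$, deduce that $x$ is $[1,c_y]\T$-supercyclic, and conclude $c_y=b$. The only difference is in this last step: the paper dispatches it in one line (``which is true if and only if $c=b$''), implicitly using that the infimum $\inf\{\lambda:\ x\text{ is }[1,\lambda]\T\text{-supercyclic}\}$ equals $b$ under the hypothesis, whereas you re-invoke Proposition~\ref{prop1-thm1} for $x$ with $c_y$ in place of $b$ to produce a $c'\le c_y<b$ and then read off the contradiction from $\aorb(\T x,T)\cap[1,c']\T x=\T x$. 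This is a legitimate and self-contained way to justify that line; the extra application of Proposition~\ref{prop1-thm1} is a minor variation, not a different strategy.
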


\begin{proof}Let $y$ be a $[1,b]\T$-supercyclic vector for $T$. By Proposition \ref{prop1-thm1} there exists $1<c\leq b$ such that $y$ is $[1,c]\T$-supercyclic for $T$ and $\aorb(\T y,T)\cap [1,c]\T =\T y \cup c\T y$. It is enough to show that $c=b$. Let $\mu \in [1,b]$ be such that $y\in \aorb(\mu \T x,T)$. We have
$$X=\aorb([1,c]\T y,T)\subset \aorb([1,c]\T \mu x,T),$$
so that $\mu x$ and then $x$ are $[1,c]\T$-supercyclic for $T$, which is true if and only if $c=b$.
\end{proof}

We can also characterize the multiples of $x$ belonging to the orbit of $x$ itself.

\begin{lemma}\label{lemma5-thm1}Let $x$ be $[1,b]\T$-supercyclic for $T$ such that $\aeorb(\T x,T)\cap [1,b]\T x=\T x \cup b\T x$. If, for some $\mu >0$, $\mu x$ belongs to $\aeorb(\T x,T)$ then $\mu b^mx$ belongs to $\aeorb(\T x,T)$ for every $m\in \Z$.
\end{lemma}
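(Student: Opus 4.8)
The plan is to study the closed set $E:=\overline{\text{Orb}}(\T x,T)$ together with the parameter set $R:=\{\mu>0:\ \mu x\in E\}$, and to prove that $R$ is a submonoid of $(\R^{+},\cdot)$ containing both $b$ and $b^{-1}$. Since the conclusion of the lemma says exactly that $\mu b^{m}\in R$ for every $m\in\Z$ whenever $\mu\in R$, this will finish the proof. (The case $b=1$ being vacuous, I may assume $b>1$.)

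First I would record that $E$ is closed, is $T$-invariant (because $T$ maps $\text{Orb}(\T x,T)$ into itself) and is $\T$-invariant (because $\zeta\T=\T$ for every $\zeta\in\T$). Using only these properties one checks that $R$ is stable under multiplication: if $\mu,\nu\in R$, pick $\gamma_{k}\in\T$ and $n_{k}\geq0$ with $\gamma_{k}T^{n_{k}}x\to\nu x$; then $\mu\gamma_{k}x=\gamma_{k}(\mu x)\in E$, hence $\mu\gamma_{k}T^{n_{k}}x=T^{n_{k}}(\mu\gamma_{k}x)\in E$, and letting $k\to\infty$ gives $\mu\nu x\in E$. Since $x\in E$ we have $1\in R$, so $R$ is a submonoid of $(\R^{+},\cdot)$. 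Finally, the hypothesis $\overline{\text{Orb}}(\T x,T)\cap[1,b]\T x=\T x\cup b\T x$ translates, using $x\neq0$, into the two facts $b\in R$ and $R\cap[1,b]=\{1,b\}$.

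The heart of the matter is to show that $b^{-1}\in R$. For this I would use that $x$ is $[1,b]\T$-supercyclic, so $\frac{1}{b}x\in X=\overline{\text{Orb}}([1,b]\T x,T)$: write $\rho_{k}\gamma_{k}T^{n_{k}}x\to\frac{1}{b}x$ with $\rho_{k}\in[1,b]$, $\gamma_{k}\in\T$, $n_{k}\geq0$, and pass to a subsequence so that $\rho_{k}\gamma_{k}\to c$. Then $|c|=\lim_{k}\rho_{k}\in[1,b]$, in particular $c\neq0$, so dividing gives $T^{n_{k}}x\to\frac{1}{cb}x$. Since each $T^{n_{k}}x$ belongs to $\text{Orb}(\T x,T)$ and $E$ is closed and $\T$-invariant, it follows that $\frac{1}{\rho b}x\in E$ where $\rho:=|c|$, i.e. $\frac{1}{\rho b}\in R$. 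Multiplying by $b\in R$ we get first $\frac{1}{\rho}\in R$ and then $\frac{b}{\rho}\in R$; since $\frac{b}{\rho}\in[1,b]$, the constraint $R\cap[1,b]=\{1,b\}$ forces $\frac{b}{\rho}\in\{1,b\}$, hence $\rho\in\{1,b\}$, and in either case one reads off $\frac{1}{b}\in R$.

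Once $b$ and $b^{-1}$ both lie in the monoid $R$, every $\mu\in R$ satisfies $\mu b^{m}\in R$ for all $m\in\Z$, which is exactly the claim that $\mu b^{m}x\in\overline{\text{Orb}}(\T x,T)$ for every $m\in\Z$. I expect the only genuinely delicate step to be the third paragraph — extracting from the $[1,b]\T$-supercyclicity an element of $R$ of the special form $\frac{1}{\rho b}$ and then pinning down $\rho$ via the gap hypothesis; the invariance bookkeeping of the first two paragraphs is routine.
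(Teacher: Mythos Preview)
Your proof is correct and follows essentially the same strategy as the paper: establish that the set $R=\{\mu>0:\mu x\in\aorb(\T x,T)\}$ is multiplicatively closed and contains $b$, then show $b^{-1}\in R$ by approximating a suitable multiple of $x$, extracting a limit parameter, and using the gap condition $R\cap[1,b]=\{1,b\}$ to pin it down. The only cosmetic difference is that the paper rescales to $[\tfrac{1}{b^2},\tfrac{1}{b}]\T$-supercyclicity to locate an element of $R$ in $[\tfrac{1}{b^2},\tfrac{1}{b}]$, whereas you approximate $\tfrac{1}{b}x$ directly from the $[1,b]\T$-orbit; both routes lead to the same two-case endgame.
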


\begin{proof}Since $bx \in \aorb(\T x,T)$ we get $\aorb(\T b^mx,T)\subset \aorb(\T b^{m-1}x,T)$ for every $m\ge 1$ so, if $\mu x \in \aorb(\T x,T)$ then we deduce that $\mu b^mx \in \aorb(\T b^mx,T)\subset \aorb(\T x,T)$ for every $m\geq 0$. Similarly, to prove that the latter holds also for $m<0$ it is enough to show that $\frac{1}{b}x \in \aorb(\T x,T)$. Now observe that $x$ is $[\frac{1}{b^2},\frac{1}{b}]\T$-supercyclic for $T$ so $\aorb(\T x,T)$ must contain an element of $[\frac{1}{b^2},\frac{1}{b}]\T x$. But, by the previous, if $\lambda x \in \aorb(\T x,T)\cap [\frac{1}{b^2},\frac{1}{b}]\T x$ for some $\lambda\in\R_+$ then $\lambda b^2 x \in \aorb(\T x,T)\cap [1,b]\T x$ hence $\lambda = 1/b^2$ or $\lambda =1/b$ by hypothesis. Finally if $\lambda = 1/b^2$ then $\frac{1}{b}x=\frac{1}{b^2}bx \in \aorb(\T x,T)$.
\end{proof}

Moreover, this characterization transfers to arbitrary $[1,b]\T$-supercyclic vectors for $T$.

\begin{lemma}\label{lemma6-thm1}If $x$ is $[1,b]\T$-supercyclic for $T$ but $\aeorb(\T x,T)\cap [1,b]\T x=\T x \cup b\T x$ then for every $[1,b]\T$-supercyclic vector $y$, we have $\aeorb(\T y,T)\cap \R _+y =\left\{b^n y:\,n\in\Z\right\}$.
\end{lemma}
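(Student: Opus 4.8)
The plan is to deduce everything from the two preceding lemmas, applied to $y$ in place of $x$. The first step is to transfer the structural hypothesis: by Lemma~\ref{lemma4-thm1}, every $[1,b]\T$-supercyclic vector $y$ satisfies $\aeorb(\T y,T)\cap[1,b]\T y=\T y\cup b\T y$. Hence $y$ itself meets the hypothesis of Lemma~\ref{lemma5-thm1}, and we record its consequence in the form we shall use: whenever $\mu>0$ and $\mu y\in\aeorb(\T y,T)$, then $\mu b^m y\in\aeorb(\T y,T)$ for every $m\in\Z$.

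The inclusion $\left\{b^n y:\,n\in\Z\right\}\subset\aeorb(\T y,T)\cap\R_+y$ then follows immediately by applying the displayed consequence with $\mu=1$, since $y=1\cdot T^0y\in\aeorb(\T y,T)$.

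For the reverse inclusion, let $\mu>0$ be such that $\mu y\in\aeorb(\T y,T)$, and let $n\in\Z$ be the unique integer with $\mu b^{-n}\in[1,b[$. By the above, $\mu b^{-n}y\in\aeorb(\T y,T)$, so $\mu b^{-n}y$ belongs to $\aeorb(\T y,T)\cap[1,b]\T y=\T y\cup b\T y$; since $\mu b^{-n}$ is a positive real lying in $[1,b[$, it cannot equal $b$, and therefore $\mu b^{-n}=1$, that is, $\mu=b^n$.

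I do not expect a genuine obstacle in this lemma: once Lemma~\ref{lemma4-thm1} is in hand the argument is a short combination of Lemma~\ref{lemma5-thm1} with the elementary remark that a positive real in $[1,b[$ which belongs to $\T\cup b\T$ must equal $1$. The only point requiring a little care is to invoke Lemma~\ref{lemma4-thm1} first, so as to know that $y$ — and not merely $x$ — enjoys the ``gap'' property $\aeorb(\T y,T)\cap[1,b]\T y=\T y\cup b\T y$, which is exactly what makes Lemma~\ref{lemma5-thm1} applicable to $y$.
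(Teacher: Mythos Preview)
Your proof is correct and follows essentially the same approach as the paper: both first invoke Lemma~\ref{lemma4-thm1} to transfer the gap property to $y$, then use Lemma~\ref{lemma5-thm1} (applied to $y$) together with $y\in\aeorb(\T y,T)$ for the inclusion $\supset$, and for $\subset$ shift $\mu$ by a suitable power of $b$ into $[1,b]$ and read off the conclusion from $\aeorb(\T y,T)\cap[1,b]\T y=\T y\cup b\T y$. Your write-up is slightly more explicit (choosing $n$ with $\mu b^{-n}\in[1,b[$ and noting $\R_+\cap(\T\cup b\T)=\{1,b\}$), but the argument is the same.
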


\begin{proof}From Lemmas \ref{lemma4-thm1} and \ref{lemma5-thm1}, since $y\in \aorb(\T y,T)$,
$$\aorb(\T y,T)\cap \R_+y\supset \left\{b^ny:\,n\in \Z\right\}.$$
Let now $\mu y\in \aorb(\T y,T)\cap \R_+y$ with $\mu\in\R_+$. By Lemmas \ref{lemma4-thm1} and \ref{lemma5-thm1}, there exists $m\in \Z$ such that $\mu b^m y\in \aorb(\T y,T)\cap [1,b]y$. If $\mu \neq b^n$ for any $n\in \Z$, then we have a contradiction with Lemma \ref{lemma4-thm1}.
\end{proof}

Thanks to the previous lemmas, we are now able to describe completely the set $\aorb(\T x,T)\cap \R _+y$ in a unified way where the dependence on $y$ appears only through a single parameter $\lambda$.

\begin{proposition}\label{prop2-thm1}If $x$ is $[1,b]\T$-supercyclic for $T$ but $\aeorb(\T x,T)\cap [1,b]\T x=\T x \cup b\T x$ then for every $[1,b]\T$-supercyclic vector $y$, there exists $\lambda \in [1,b]$ such that
$$\aeorb(\T x,T)\cap \R _+y =\left\{\frac{b^n}{\lambda} y:\,n\in\Z\right\}.$$
\end{proposition}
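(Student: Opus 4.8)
The plan is to transfer the structure already established for $x$ itself (in Lemmas \ref{lemma5-thm1} and \ref{lemma6-thm1}) to an arbitrary $[1,b]\T$-supercyclic vector $y$, using the inclusion of closure-of-orbit sets that arises whenever one point lies in the closed orbit of another. First I would fix a $[1,b]\T$-supercyclic vector $y$. By Corollary \ref{cor0} applied with $n=0$, there exists $\mu\in[1,b]$ with $y\in\aeorb(\mu\T x,T)$, equivalently $\frac{1}{\mu}y\in\aeorb(\T x,T)$. I would like to say this $\mu$ is essentially unique modulo powers of $b$: if also $\frac{1}{\mu'}y\in\aeorb(\T x,T)$, then writing $\frac{1}{\mu'}y = \frac{\mu}{\mu'}\cdot\frac1\mu y$ and chasing the inclusion $\aeorb(\T \frac1{\mu'}y,T)\subset\aeorb(\T\frac1\mu x,T)$ forces, via Lemma \ref{lemma5-thm1} (the description of the real multiples of $x$ lying in $\aeorb(\T x,T)$), that $\mu/\mu'\in\{b^n:n\in\Z\}$. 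Setting $\lambda$ to be the representative of $\mu$ in $[1,b]$ (i.e. the unique $\lambda\in[1,b]$ with $\mu\in\lambda\{b^n:n\in\Z\}$, which is well-defined since the $b^n$-orbit of any positive real meets $[1,b]$), we get $\frac{1}{\lambda}y\in\aeorb(\T x,T)$.

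Next I would compute $\aeorb(\T x,T)\cap\R_+y$. For the inclusion $\supset$: since $\frac1\lambda y\in\aeorb(\T x,T)$ and $bx\in\aeorb(\T x,T)$, the monotonicity $\aeorb(\T b^m x,T)\subset\aeorb(\T x,T)$ used in Lemma \ref{lemma5-thm1} gives $\frac{b^n}{\lambda}y\in\aeorb(\T x,T)$ for all $n\ge 0$; for negative $n$ one invokes, exactly as in Lemma \ref{lemma5-thm1}, that $\frac1b x\in\aeorb(\T x,T)$, so $\frac{b^{-1}}{\lambda}y\in\aeorb(\T \frac1b x,T)\subset\aeorb(\T x,T)$, and iterate. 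For the reverse inclusion $\subset$: suppose $\rho y\in\aeorb(\T x,T)$ with $\rho\in\R_+$. Then $\aeorb(\T\rho y,T)\subset\aeorb(\T x,T)$, and since $\rho y$ is again $[1,b]\T$-supercyclic (being a nonzero scalar multiple of $y$), Lemma \ref{lemma6-thm1} applied to $\rho y$ says $\aeorb(\T\rho y,T)\cap\R_+(\rho y)=\{b^n\rho y:n\in\Z\}$; combining with the already-known membership of $\frac{b^n}{\lambda}y$ and the fact that both $\rho y$ and $\frac1\lambda y$ lie in $\aeorb(\T x,T)$, I would argue that $\rho\in\frac1\lambda\{b^n:n\in\Z\}$ — if not, $\rho$ and $\frac1\lambda$ would represent two distinct cosets and one could, after multiplying by a suitable $b^m$ to land in $[1,b]$, contradict Lemma \ref{lemma4-thm1} (the description $\aeorb(\T y,T)\cap[1,b]\T y=\T y\cup b\T y$) for the vector $y$.

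The main obstacle I anticipate is the uniqueness/well-definedness of $\lambda$: one must be careful that the coset $\mu\{b^n:n\in\Z\}\subset\R_+$ is genuinely independent of which $\mu$ one picks out of Corollary \ref{cor0}, and that the contradiction with Lemma \ref{lemma4-thm1} is extracted cleanly — i.e. that having two real multiples $\rho_1 y,\rho_2 y$ of $y$ in $\aeorb(\T x,T)$ with $\rho_1/\rho_2\notin\{b^n\}$ really does produce, after the chain of inclusions $\aeorb(\T\rho_i y,T)\subset\aeorb(\T x,T)$ and rescaling, an element of $[1,b]\T y\setminus(\T y\cup b\T y)$ inside $\aeorb(\T y,T)$. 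This is where the homogeneity of the whole setup under positive scalings (the observation preceding Lemma \ref{lemma4-thm1} that the defining property passes to every $[1,b]\T$-supercyclic vector) does the real work, and I would make sure to state that reduction explicitly before running the argument.
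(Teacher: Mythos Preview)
Your plan for the inclusion $\supset$ is fine and matches the paper. The gap is in the reverse inclusion $\subset$. All the inclusions you write down, namely $\aeorb(\T\rho y,T)\subset\aeorb(\T x,T)$ and $\aeorb(\T\frac{1}{\lambda}y,T)\subset\aeorb(\T x,T)$, go \emph{into} $\aeorb(\T x,T)$; they tell you nothing about $\aeorb(\T y,T)$, and so you cannot feed anything into Lemma~\ref{lemma4-thm1} or Lemma~\ref{lemma6-thm1} for $y$. Concretely: having two positive multiples $\rho y$ and $\frac{1}{\lambda}y$ in $\aeorb(\T x,T)$ with $\rho\lambda\notin\{b^n:n\in\Z\}$ does \emph{not} by itself produce an element of $\aeorb(\T y,T)\cap(1,b)\T y$, because $\aeorb(\T x,T)$ may well be strictly larger than any scalar multiple of $\aeorb(\T y,T)$ from what you have written so far. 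The ``homogeneity'' you invoke in the last paragraph does not supply the missing direction.

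The fix --- and this is exactly the paper's argument --- is to exploit that $y$ is itself $[1,b]\T$-supercyclic in the \emph{opposite} direction: there exists $\tau\in[1,b]$ with $x\in\aeorb(\tau\T y,T)$, which gives the reverse inclusion $\aeorb(\T x,T)\subset\aeorb(\tau\T y,T)$. Now any $\mu y\in\aeorb(\T x,T)$ lies in $\aeorb(\tau\T y,T)$, so $\frac{\mu}{\tau}y\in\aeorb(\T y,T)$, and Lemma~\ref{lemma6-thm1} (applied to $y$, not Lemma~\ref{lemma4-thm1}) forces $\mu=\tau b^n$. Since also $\frac{1}{\lambda}y\in\aeorb(\T x,T)$, the same reasoning gives $\frac{1}{\lambda}=\tau b^m$, hence $\mu=b^{n-m}/\lambda$. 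This is a two-line argument once you have the reversal, and it makes your preliminary discussion of the uniqueness of $\lambda$ superfluous: pick any $\lambda$, prove both inclusions, and uniqueness modulo $\{b^n\}$ drops out.
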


\begin{proof}Let $y$ be $[1,b]\T$-supercyclic for $T$. Since $x$ is $[1,b]\T$-supercyclic for $T$ there exists $\lambda \in [1,b]$ such that $y\in \aorb(\lambda \T x,T)$. Then, Lemma \ref{lemma6-thm1} implies
$$\aorb(\T x,T)\supset \bigcup_{n\in \mathbb{Z}}\aorb(b^n\T x,T)\supset \left\{\frac{b^n}{\lambda}y:\,n\in\mathbb{Z}\right\}.$$
Let now $\mu y \in \aorb(\T x,T)$. Since $y$ is $[1,b]\T$-supercyclic there exists $\tau \in[1,b]$ such that $x\in \aorb(\tau \T y,T)$. Thus $\aorb(\tau \T y,T)\supset \{\mu y , y/\lambda\}$ hence by Lemma \ref{lemma6-thm1} $\mu=\tau b^n$ and $\frac{1}{\lambda}=\tau b^m$ for some $m,n$ and thus $\mu=b^{n-m}/\lambda$.
\end{proof}

\medskip{}

Let $x$ be $[1,b]\T$-supercyclic for $T$ such that $\aorb(\T x,T)\cap [1,b]\T x=\T x \cup b\T x$. Given a $[1,b]\T$-supercyclic vector  $y$, observe that if the $\lambda$ given by the previous proposition belongs to $]1,b[$ then it is unique. Otherwise, $\lambda =1$ and $\lambda =b$ works for $y$. Also note that if $\lambda \in ]1,b[$ then it is the unique $\lambda \in ]1,b[$ such that $y\in \aorb(\lambda \T x,T)$. Similarly, $\lambda \in \{1,b\}$ if and only if $y\in \aorb(\T x,T)$. 

\medskip{}

Let $\varphi:[1,b]\rightarrow \T$ be the parametrization of $\T$ given by $\varphi(t)=\exp(2i\pi\frac{t-1}{b-1})$. According to the previous observation and thanks to Lemma \ref{lemma-P(T)}, we can define an application $\Lambda:\text{span}\{\orb(x,T)\}\setminus\{0\}\rightarrow \T$ by

$$\Lambda (y)=\left\{\begin{array}{ll} \varphi(\lambda _y) & \mbox{ if $y\notin \aorb(\T x,T)$ where $\lambda _y$ is uniquely given by Proposition \ref{prop2-thm1}}\\
1 & \mbox{ if $y\in \aorb(\T x,T)$.}
\end{array}\right.$$

This application $\Lambda$ is well-defined according to the above observation. Moreover, we remark that for every $\lambda\in [1,b]$, we have $u\in \aorb(\lambda\T x, T)\cap \text{span}\{\orb(x,T)\}\setminus\{0\}$ if and only if $\Lambda(u)=\varphi(\lambda)$. It will play a crucial role to end up with a contradiction, assuming that such an operator $T$ exists.

\begin{corollary}\label{main-cor-thm1}Let $x$ be a $[1,b]\T$-supercyclic vector for $T$ such that $\aeorb(\T x,T)\cap [1,b]\T x=\T x \cup b\T x$. The following properties hold.
\begin{enumerate}\item $\Lambda$ is continuous;
\item $\Lambda (\mu T^nx)=\varphi (\mu)$ for every $n\geq 0$ and every $\mu \in [1,b]$.
\end{enumerate}
\end{corollary}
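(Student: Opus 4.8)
The plan is to establish (2) first, since it is essentially a bookkeeping consequence of the definition of $\Lambda$, and then to deduce (1) by a sequential-continuity argument that rests on Proposition \ref{prop2-thm1} together with the "rigidity" Lemmas \ref{lemma4-thm1}--\ref{lemma6-thm1}. Throughout I work with $y\in\text{span}\{\orb(x,T)\}\setminus\{0\}$, which is legitimate because Lemma \ref{lemma-P(T)} guarantees every such nonzero $y$ is $[1,b]\T$-supercyclic, so Proposition \ref{prop2-thm1} applies to it and $\lambda_y$ makes sense.

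\medskip

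\noindent\emph{Proof of (2).} Fix $n\geq 0$ and $\mu\in[1,b]$ and set $y=\mu T^n x$. If $\mu\in\{1,b\}$ then $y\in\aorb(\T x,T)$ (directly for $\mu=1$, and via $bx\in\aorb(\T x,T)$ together with the inclusion $\aorb(\T b^mx,T)\subset\aorb(\T x,T)$ from Lemma \ref{lemma5-thm1} for $\mu=b$), so $\Lambda(y)=1=\varphi(\mu)$ since $\varphi(1)=\varphi(b)=1$. If $\mu\in\,]1,b[$, I claim the unique $\lambda\in\,]1,b[$ with $y\in\aorb(\lambda\T x,T)$ is $\lambda=\mu$; granting this, $\Lambda(y)=\varphi(\mu)$ by definition. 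That $y=\mu T^nx\in\aorb(\mu\T x,T)$ is immediate ($\gamma T^{n}x\to\mu T^n x$ for no limit needed — just take $\gamma=\mu\cdot 1$, i.e. $\mu T^nx$ is literally in $\orb(\mu\T x,T)$). Uniqueness follows from the observation recorded just before the corollary: if also $y\in\aorb(\lambda'\T x,T)$ with $\lambda'\in\,]1,b[$, then Proposition \ref{prop2-thm1} gives $\aorb(\T x,T)\cap\R_+y=\{b^k\lambda^{-1}y\}_k=\{b^k\lambda'^{-1}y\}_k$, forcing $\lambda/\lambda'\in b^{\Z}$, hence $\lambda=\lambda'$ since both lie in $]1,b[$.

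\medskip

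\noindent\emph{Proof of (1).} Since $\T$ is metrizable it suffices to prove sequential continuity: let $y_j\to y$ in $\text{span}\{\orb(x,T)\}\setminus\{0\}$; I must show $\Lambda(y_j)\to\Lambda(y)$. By compactness of $\T$ it is enough to show that every convergent subsequence of $(\Lambda(y_j))$ has limit $\Lambda(y)$, so pass to a subsequence with $\Lambda(y_j)=\varphi(\lambda_{y_j})\to\varphi(\lambda_\infty)$ for some $\lambda_\infty\in[1,b]$ (using that $\varphi$ is a homeomorphism of $[1,b]/\{1\sim b\}$ onto $\T$, so $\lambda_{y_j}\to\lambda_\infty$ in that quotient). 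For each $j$ pick $\gamma_j\in\T$ and $n_j$ with $\|\gamma_j\lambda_{y_j}T^{n_j}x-y_j\|<2^{-j}$; using compactness of $\T$ again, pass to a further subsequence with $\gamma_j\to\gamma_\infty$. Then $\gamma_j\lambda_{y_j}T^{n_j}x\to y$, so $y\in\aorb(\lambda_\infty\T x,T)$ (here I use that $\lambda_{y_j}\to\lambda_\infty$ as \emph{real numbers} along the chosen subsequence — if $\lambda_\infty\in\{1,b\}$ one must be slightly careful, replacing $\lambda_{y_j}$ by $\lambda_{y_j}$ or $\lambda_{y_j}/b$ as needed so that the limit is a genuine real multiple). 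Hence, by Proposition \ref{prop2-thm1} applied to $y$, either $\lambda_y\in\,]1,b[$ and then $\lambda_\infty=\lambda_y$ by the uniqueness clause, or $\lambda_y\in\{1,b\}$, i.e. $y\in\aorb(\T x,T)$, and then $\lambda_\infty\in\{1,b\}$ as well. In both cases $\varphi(\lambda_\infty)=\Lambda(y)$, which is what we wanted.

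\medskip

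\noindent\emph{Main obstacle.} The delicate point is the last paragraph: controlling the \emph{real} parameter $\lambda_{y_j}$ in the limit, in particular the wrap-around at the endpoints $1$ and $b$ where $\varphi(1)=\varphi(b)$. A sequence $y_j\to y\in\aorb(\T x,T)$ may have $\lambda_{y_j}$ accumulating at both $1$ and $b$ simultaneously; this is harmless for the value of $\Lambda$ (both give $\varphi=1$) but the argument must be phrased so that whichever accumulation point is selected, Proposition \ref{prop2-thm1} still forces it into $\{1,b\}$ — which it does, precisely because $y\in\aorb(\T x,T)$ rules out $\lambda_y\in\,]1,b[$. Making this endpoint analysis clean (rather than a mess of cases) is the real content; everything else is the definition of $\Lambda$ unwound, plus two applications of compactness of $\T$.
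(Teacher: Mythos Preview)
Your argument is correct and follows essentially the same route as the paper's: both reduce (1) to a sequential--compactness argument showing that any accumulation point of $(\Lambda(y_j))$ must equal $\Lambda(y)$, via the key observation (recorded just before the corollary) that $u\in\aorb(\lambda\T x,T)$ forces $\Lambda(u)=\varphi(\lambda)$; and both derive (2) directly from $\mu T^nx\in\aorb(\mu\T x,T)$.

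The one place your write-up diverges from the paper is the endpoint handling in (1). You work in the quotient $[1,b]/\{1\sim b\}$, then unwind back to real $\lambda$'s with the somewhat awkward ``replace $\lambda_{y_j}$ by $\lambda_{y_j}$ or $\lambda_{y_j}/b$'' manoeuvre. The paper sidesteps this entirely by first disposing of the case where $\Lambda(y_j)=1$ for infinitely many $j$ (then $y_j\in\aorb(\T x,T)$ along a subsequence, so $y\in\aorb(\T x,T)$ and $\Lambda(y)=1$), after which one may assume all $\lambda_{y_j}\in\,]1,b[$ and extract a genuine real limit $\lambda\in[1,b]$ with no wrap-around. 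This is cleaner and would let you drop the quotient discussion and the explicit choice of $\gamma_j,n_j$ altogether: once $\lambda_{y_j}\to\lambda$ as real numbers and $y_j\in\aorb(\lambda_{y_j}\T x,T)$, a one-line diagonal argument gives $y\in\aorb(\lambda\T x,T)$.
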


\begin{proof}(1) It is sufficient to prove that, for every sequence $(u_n)\subset \text{span}\{\orb(x,T)\}\setminus\{0\}$ and every $u\in \text{span}\{\orb(x,T)\}\setminus\{0\}$, if $u_n\to u$ then $\Lambda (u)$ is the only limit point of $(\Lambda (u_n))_n$. By compactness of $\T$, we can assume without loss of generality that $\Lambda (u_n)\rightarrow \alpha \in \T$ and we have to show that $\alpha =\Lambda (u)$. If $\Lambda (u_n)=1$ for infinitely many $n$ then first $\alpha =1$ and, second, infinitely many $u_n$ belongs to $\aorb(\T x,T)$. It follows that $u\in \aorb(\T x,T)$ so that $\Lambda (u)=1=\alpha$. If we are not in the previous case, then we can assume that $u_n \notin \aorb(\T x,T)$ for every $n$ so $\Lambda (u_n)=\varphi(\lambda _n)$, $n\geq 0$, with $\lambda _n \in ]1,b[$ and $u_n \in \aorb(\lambda _n \T x,T)$. By compactness we can assume that $\lambda _n \rightarrow \lambda \in [1,b]$ so that $u\in \aorb(\lambda \T x,T)$. By continuity of $\varphi$ it follows that $\varphi(\lambda)=\alpha=\Lambda (u)$.\\
(2) comes easily from the fact that $T^nx \in \aorb(\T x,T)$ for any $n\geq 0$ and the definition of $\Lambda$.
\end{proof}

We are now ready to finish the proof of Theorem \ref{main-thm-sufficiency-coronna}.

\begin{proof}[Proof of Theorem \ref{main-thm-sufficiency-coronna}] We assume by contradiction that $x$ is not hypercyclic for $T$. By Proposition \ref{prop1-thm1} we can assume that $x$ is a $[1,b]\T$-supercyclic vector for $T$ such that $\aorb(\T x,T)\cap [1,b]\T x=\T x \cup b\T x$, and thus that the application $\Lambda :\text{span}\{\orb(x,T)\}\setminus\{0\}\rightarrow \T$ introduced above is well-defined.

For every $y_0,y_1\in \text{span}\{\orb(x,T)\}$, we let $[y_0,y_1]:=\{(1-t)y_0+ty_1:t\in[0,1]\}$ and if $0\notin [y_0,y_1]$, we define the closed (continuous) curve $\gamma_{[y_0,y_1]}:[0,1]\rightarrow \T$ by $\gamma_{[y_0,y_1]}=\Lambda \circ \tilde{\gamma_{[y_0,y_1]}}$ where $\tilde{\gamma_{[y_0,y_1]}}:[0,1]\rightarrow \text{span}\{\orb(x,T)\}$ is given by
$$\tilde{\gamma_{[y_0,y_1]}}(t)=(1-t)y_0+t y_1.$$
Note that $0$ does not belong to the image of $\tilde{\gamma_{[T^n x,T^m x]}}$ for any $n,m\geq 0$, and that $\gamma_{[T^n x,T^m x]}$ is a closed continuous curve by Corollary \ref{main-cor-thm1}. Moreover we observe that $\gamma _{[T^n x,T^{n+1} x]}=\gamma _{[x,Tx]}$ for any $n\geq 0$. Indeed this comes from the definition of $\Lambda$ and from the fact that if $y\in \aorb(\lambda \T x,T)$ then $T^ny\in \aorb(\lambda \T x,T)$ for every $n\in\N$. So in particular, $\text{Ind}_0\gamma _{[T^n x,T^{n+1} x]}=\text{Ind}_0\gamma_{[x,T x]}$ for any $n\geq 0$, where $\text{Ind}_0\gamma$ stands for the winding number of a closed continuous curve $\gamma$ around $0$. On the other hand, for each $\theta \in [0,2\pi[$ and each $y\in \text{span}\{\orb(x,T)\}\setminus \{0\}$ we define the closed (continuous) curve $\gamma_{\theta,y}:[0,1]\rightarrow \T$ by $\gamma_{\theta,y}=\Lambda \circ \tilde{\gamma_{\theta,y}}$ where $\tilde{\gamma_{\theta,y}}:[0,1]\rightarrow \text{span}\{\orb(x,T)\}\setminus\{0\}$ is given by
$$\tilde{\gamma_{\theta,y}}(t)=e^{i\theta t}y.$$
It is again easily seen by definition of $\Lambda$ that $\gamma_{\theta,y}$ is the constant path equal to $\Lambda(y)$, therefore $\text{Ind}_0\gamma_{\theta,y}=0$. Similarly, we observe that $\text{Ind}_0\gamma_{[bx, x]}=-1$.

Now, using Lemma \ref{lemma6-thm1}, we deduce that $bx\in\aorb(\T x,T)\setminus \orb(\T x,T)$ because otherwise $\orb(\T x,T)$ would be contained in a finite dimensional space contradicting the $[1,b]\T$-supercyclicity of $x$. Then, by compactness of $\T$, there exists $\theta \in [0,2\pi[$ and $(n_k)_k \subset \N$ increasing such that $e^{i\theta}T^{n_k}x \rightarrow bx$ as $k$ tends to $\infty$. We assert that, up to take a subsequence, $(n_k)_k$ can be chosen in such a way that $\text{Ind}_0\gamma _{[e^{i\theta}T^{n_k}x,bx]}=0$. Indeed, if we assume by contradiction that for some $N\geq 0$ and every $k\geq N$ the winding number $\text{Ind}_0\gamma _{[e^{i\theta}T^{n_k}x,bx]}$ is nonzero, then for every $\lambda\in [1,b[$, 
\[[e^{i\theta}T^{n_k}x,bx]\cap \aorb(\lambda\T x,T)\ne \emptyset\]
for any $k\geq N$.
Yet for every $\varepsilon >0$, there exists $N_{\varepsilon}\geq N$ such that for every $k\geq N_{\varepsilon}$ $[e^{i\theta}T^{n_k}x,bx]\subset B(bx,\varepsilon)$. In other words, for any $\lambda \in [1,b[$, there exists a sequence $(y_n)$ converging to $bx$ such that for every $n\geq 0$, $y_n\in \aorb(\lambda \T x,T)$. Thus $bx\in \aorb(\lambda \T x,T)$ for every $\lambda \in [1,b[$, a contradiction with Proposition \ref{prop2-thm1}.

For the remaining of the proof, let $\theta \in [0,2\pi[$ and $(n_k)_k \subset \N$ increasing be such that for every $k\geq 0$, $\text{Ind}_0\gamma _{[e^{i\theta}T^{n_k}x,bx]}=0$.

Given any $n\geq 0$, we define $\tilde{\gamma_{n,\theta}}:[0,1]\rightarrow \text{span}\{x,\ldots,T^{n}x\}$ by
$$\tilde{\gamma_{n,\theta}}(s)=\left\{\begin{array}{l}
\tilde{\gamma_{[T^jx,T^{j+1}x]}}((n+3)s-j))\quad \mbox{if $\frac{j}{n+3}\leq s\leq \frac{j+1}{n+3}$,\quad$0\leq j\leq n-1$}\\[4pt]
\tilde{\gamma_{\theta,T^{n}x}}((n+3)s-n)\quad \mbox{if $\frac{n}{n+3}\leq s\leq \frac{n+1}{n+3}$}\\[4pt]
\tilde{\gamma_{[e^{i\theta}T^{n}x,bx]}}((n+3)s-(n+1))\quad \mbox{if $\frac{n+1}{n+3}\leq s\leq \frac{n+2}{n+3}$}\\
\tilde{\gamma_{[bx,x]}}((n+3)s-(n+2))\quad \mbox{if $\frac{n+2}{n+3}\leq s\leq 1$}.\\
\end{array}\right.$$
By construction, one easily notices that $\tilde{\gamma_{n,\theta}}$ does never take the value zero.
Then we can set $\gamma_{n,\theta}=\Lambda \circ \tilde{\gamma_{n,\theta}}$ for every $n\geq 0$. Moreover, since $\text{span}\{\orb(x,T)\}$ is infinite dimensional we can retract, staying in $\text{span}\{\orb(x,T)\}\setminus \{0\}$, the closed curve $\tilde{\gamma_{n,\theta}}$ onto some $T^{m}x\in \text{span}\{\orb(x,T)\}\setminus \text{span}\{x,\ldots,T^{n}x\}$ for every $n\geq 0$ and some $m>n$, and thus build an homotopy of closed curves in $\T$ such that $\gamma_{n,\theta}$ is homotopic to the constant path $\Lambda(T^{m}x)$. Thus $\text{Ind}_0\gamma _{n,\theta}=0$ for every $n\geq 0$.

With $\theta$ and $(n_k)_k$ as above and as a consequence of the observations made at the beginning at the proof, we deduce that for every $k\geq 0$
\begin{eqnarray*}0 & = & \text{Ind}_0\gamma_{n_k,\theta}\\
& = & \sum _{j=0}^{n_k-1}\text{Ind}_0\gamma _{[T^jx,T^{j+1}x]}+\text{Ind}_0\gamma_{\theta,T^{n_k}x}+\text{Ind}_0\gamma_{[e^{i\theta}T^{n_k} x,bx]}+\text{Ind}_0\gamma _{[bx,x]}\\
& = & n_k\text{Ind}_0\gamma_{[x,Tx]}+0+0-1,\\
\end{eqnarray*}
and it follows that $n_k\text{Ind}_0\gamma_{[x,Tx]}=1$ for any $k\geq 0$, which is impossible since $n_k$ tends to $\infty$.
\end{proof}

\section{Proof of Theorem~B}

The aim of this section is to prove Theorem~B. We begin by proving the sufficiency.

\begin{theorem}\label{Pprime-suff}
Let $T\in L(X)$ and let $\Gamma\subset \mathbb{C}$ be such that $\overline{\Gamma\mathbb{T}}\setminus\{0\}$  is bounded and bounded away from zero with an empty interior. If the set $\text{\emph{Orb}}(\Gamma\mathbb{T} x,T)$ is somewhere dense in $X$, then $x$ is a hypercyclic vector for $T$.
\end{theorem}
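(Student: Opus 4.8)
The plan is to reduce the ``somewhere dense'' hypothesis to genuine density, so that the sufficiency part of Theorem~A (i.e. Theorem~\ref{main-thm-sufficiency}, applied with $\Gamma$ replaced by the ring $\overline{\Gamma\T}$) can be invoked directly. First I would normalize: since $0$ plays no role we may assume $0\notin\overline{\Gamma\T}$, and then $\overline{\Gamma\T}$ is a compact, rotation-invariant subset of some ring $[a,b]\T$ with $0<a\le b<\infty$ and empty interior; write $\Gamma'=\overline{\Gamma\T}$. Since $\text{Orb}(\Gamma\T x,T)$ is somewhere dense, so is $\text{Orb}(\Gamma' x,T)$, and it suffices to prove that this forces $x$ to be hypercyclic. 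I will first establish that $x$ is in fact $\Gamma'$-supercyclic (i.e. $\overline{\text{Orb}}(\Gamma' x,T)=X$); once that is done, Theorem~\ref{main-thm-sufficiency} finishes the proof immediately.

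The core step is thus the Bourdon--Feldman style argument showing that a somewhere dense orbit $\text{Orb}(\Gamma' x,T)$ is everywhere dense. Let $U$ be an open set with $U\subset\overline{\text{Orb}}(\Gamma' x,T)$; set $A=\overline{\text{Orb}}(\Gamma' x,T)$, a closed $T$-invariant set which is also $\Gamma'$-invariant (since $\Gamma'$ is compact, $\Gamma' A=A$, and in particular $\Gamma' A$ is closed). The key structural observations are: (a) $A$ has nonempty interior; (b) $A=\overline{\Gamma'\,\text{Orb}(x,T)}$ is invariant under $T$ and under multiplication by $\Gamma'$, hence (using that $\Gamma'$ contains a full circle $|z|=|\gamma_0|$ for some $\gamma_0$, because $\Gamma'=\overline{\Gamma\T}$ is rotation-invariant and nonempty) $A$ is invariant under multiplication by the circle group $\T$. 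I would then run the classical Bourdon--Feldman dichotomy: either $\text{int}(A)$ is dense (so $A=X$ by connectedness/linear-space arguments, giving $\Gamma'$-supercyclicity), or the boundary $\partial(\text{int}\,A)$ is a proper closed $T$-invariant, $\T$-invariant set that swallows a tail of the orbit, leading to a contradiction with somewhere-density exactly as in \cite{Boufel} and Theorem~3.13 of \cite{Bay}. The rotation-invariance of $A$ is what lets the Bay--Matheron semigroup version apply: $\overline{\text{Orb}}(\Gamma' x,T)$ somewhere dense forces, via their Theorem~3.13 with the semigroup $\{\gamma T^n:\gamma\in\Gamma',\,n\ge0\}\supset\{\lambda T^n:\lambda\in\T,n\ge0\}$, that $\overline{\text{Orb}}(\T x,T)$ is somewhere dense, hence (again by Theorem~3.13 of \cite{Bay}) that $x$ is hypercyclic, which trivially gives $\Gamma'$-supercyclicity.

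Actually the cleanest route is to bypass reproving Bourdon--Feldman: observe that $\text{Orb}(\Gamma\T x,T)\subset\{\lambda T^n x:\lambda\in\T,n\ge0\}\cdot(\overline{\Gamma}\cap\{|z|\ge a\})$, and since $\overline{\Gamma\T}$ is bounded, somewhere-density of $\text{Orb}(\Gamma\T x,T)$ implies somewhere-density of $\text{Orb}(\T x,T)$ scaled by the compact set $\{|\gamma|:\gamma\in\Gamma\}\subset[a,b]$; a compactness/pigeonhole argument then yields that $\text{Orb}(\mu\T x,T)$ is somewhere dense for some fixed $\mu>0$, i.e. $\text{Orb}(\T(\mu x),T)$ is somewhere dense. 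By Theorem~3.13 of \cite{Bay} (quoted in the excerpt), $\mu x$ — and hence $x$ — is hypercyclic for $T$, which is the desired conclusion, and in particular $x$ is $\Gamma'$-supercyclic so Theorem~\ref{main-thm-sufficiency} is not even needed. The main obstacle I anticipate is the compactness/pigeonhole step extracting a single radius $\mu$: somewhere-density is not obviously preserved when one partitions by the radius, so one must argue that for a suitable small ball $B$ met by $\text{Orb}(\Gamma\T x,T)$ densely, and a finite cover of $[a,b]$ by short intervals, at least one interval $I_j$ has $\{\lambda T^n x:\lambda\in\T,n\ge0\}\cdot I_j$ dense in a smaller ball — and then let the mesh go to $0$ and use compactness of $[a,b]$ together with continuity to pin down $\mu$, essentially the argument already used in Claim~\ref{claim3-lemma1} and Lemma~\ref{lemma0}. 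I would carry out exactly this diagonal extraction, then close with the cited Bay--Matheron theorem.
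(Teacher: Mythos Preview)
There is a genuine gap: none of your three routes ever invokes the hypothesis that $\overline{\Gamma\T}$ has empty interior, and Proposition~\ref{Pprime-nec} shows that this hypothesis cannot be dropped. Concretely, take $\Gamma'=[1,2]\T$ (a ring, with nonempty interior) and the operator $T=e^{i\theta}\oplus\tilde T$ on $\C\oplus Y$ constructed there: the vector $(1,y)$ has $\orb(\Gamma'(1,y),T)$ somewhere dense but is not hypercyclic. Your pigeonhole-and-diagonalize argument applies verbatim to this example, since it only uses that $|\Gamma'|\subset[a,b]$ is compact; hence it must break. And it does, at the extraction step: for each closed subinterval $I\subset[1,2]$ one checks (exactly as in the proof of Proposition~\ref{Pprime-nec}) that $\aorb(I\T(1,y),T)=I\T\times Y$, which has nonempty interior whenever $I$ does; but as the mesh shrinks and $I\to\{\mu\}$, these interiors collapse and $\aorb(\mu\T(1,y),T)=\mu\T\times Y$ has empty interior. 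So ``$\aorb(I_N\T x,T)$ has nonempty interior for intervals $I_N\downarrow\{\mu\}$'' does \emph{not} force $\aorb(\mu\T x,T)$ to have nonempty interior, and the appeal to Claim~\ref{claim3-lemma1} or Lemma~\ref{lemma0} is illegitimate because both of those assume full density, not somewhere density. The same example disposes of your first two routes as well: the set $A=\aorb(\Gamma'(1,y),T)$ equals $[1,2]\T\times Y$, a closed $T$- and $\T$-invariant set with nonempty interior that is nonetheless not all of $X$, so a Bourdon--Feldman dichotomy on $A$ cannot conclude $A=X$; and $\{\gamma T^n:\gamma\in\Gamma',\,n\ge0\}$ is not a semigroup (since $\Gamma'\Gamma'\ne\Gamma'$), so Theorem~3.13 of \cite{Bay} does not apply to it.

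The paper's proof is substantially different and uses the empty-interior hypothesis in an essential way. It introduces the sets $\Lambda_n=\{\lambda>0:T^nx\in\aorb(\lambda\T x,T)\}$, shows they are multiplicatively closed, and proves that $1$ is an accumulation point of $\bigcup_n\Lambda_n$. The empty interior of $|\overline{\Gamma}|$ is exactly what makes this possible: working on a small segment $\{\lambda'\gamma(y)T^nx:1\le\lambda'<1+\eta\}\subset U$, one chooses inductively $\lambda_{k+1}$ with $\lambda_{k+1}\gamma_j/\lambda_j\notin|\overline{\Gamma}|$ for all $j\le k$, which forces the resulting ratios $\gamma_k/(\lambda_k\gamma(y))\in\Lambda_n$ to be pairwise distinct and to accumulate near $1$. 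Multiplicative closure then yields $\overline{\bigcup_n\Lambda_n}\supset[0,1]$ or $[1,\infty)$, which allows one to rescale every point of the somewhere-dense $\Gamma\T$-orbit into $\aorb(\T x,T)$; only at that final stage is the generalized Bourdon--Feldman theorem from \cite{Bay} invoked.
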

\begin{proof}
Without loss of generality, we can suppose that $0\notin \Gamma$.
Let
$$\Lambda_n:=\{\lambda\in \mathbb{R}^+:T^nx\in\overline{\text{Orb}}(\lambda \mathbb{T} x,T)\}.$$
By definition, the sequence $(\Lambda_n)$ is a non-decreasing sequence and $\Lambda_n\cup\{0\}$ is a closed set.
Moreover, if $\lambda\in \Lambda_n$ then for every $\varepsilon>0$, there exists $m\ge n$ and $\theta\in [0,2\pi]$ such that $\|\lambda e^{i\theta} T^mx-T^nx\|<\varepsilon$. We can assume that $m\ge n$ because otherwise we would have $T^nx\in \text{span}\{x,\dots,T^{n-1}x\}$ and thus $\text{Orb}(\Gamma\mathbb{T} x,T)$ would not be somewhere dense. This implies that for every $n\ge 0$, if $\lambda,\lambda'\in \Lambda_n$ then the product $\lambda\lambda'\in \Lambda_n$. Indeed, if $\lambda,\lambda'\in \Lambda_n$ then for every $\varepsilon>0$, there exists $m\ge n$ and $\theta\in [0,2\pi]$ such that $\|\lambda e^{i\theta} T^mx-T^nx\|<\frac{\varepsilon}{2}$ and there exists $m'\ge 0$ and $\theta'\in [0,2\pi]$ such that $\|\lambda' e^{i\theta'}T^{m'}x-T^nx\|<\frac{\varepsilon}{2} \lambda^{-1}\|T^{m-n}\|^{-1}$. We then get
\begin{align*}
&\|\lambda\lambda'e^{i(\theta+\theta')}T^{m+m'-n}x-T^{n}x\|\\
&\quad\le \lambda \|\lambda'e^{i\theta'} T^{m+m'-n}x-T^{m}x\|+\|\lambda e^{i\theta}T^mx-T^nx\|\\
&\quad\le  \lambda \|T^{m-n}\| \|\lambda' e^{i\theta'}T^{m'}x-T^{n}x\|+\|\lambda e^{i\theta} T^mx-T^nx\|\le\varepsilon.
\end{align*}
In particular, if $\lambda\in \Lambda_n$ then $\lambda^k\in \Lambda_n$ for every $k\ge 1$.\\

The idea of the proof of this theorem consists in showing that if $\text{Orb}(\Gamma\mathbb{T} x,T)$ is somewhere dense in $X$ and $\overline{\Gamma\mathbb{T}}$ has an empty interior then
\[\overline{\bigcup_n\Lambda_{n}}\supset [1,+\infty[ \quad\text{or}\quad \overline{\bigcup_n\Lambda_{n}}\supset [0,1]\]
and that if one of these inclusions holds then $\text{Orb}(\mathbb{T} x,T)$ is also somewhere dense and thus $x$ is hypercyclic by the generalized Bourdon-Feldman Theorem given in \cite[Theorem 3.13]{Bay} and stated in the introduction.

To this end, we consider a non-empty open set $U$ such that $U\subset \overline{\text{Orb}}(\Gamma\mathbb{T} x,T)$. We deduce that for every $y\in U$, there exists $\gamma\in |\overline{\Gamma}|$ such that $y\in \aorb(\gamma\mathbb{T} x,T)$ where $|\overline{\Gamma}|=\{|\gamma|:\gamma\in \overline{\Gamma}\}$ is bounded  and bounded away from zero. Given $y\in U$, we let $\gamma(y):=\inf\{\gamma\in |\overline{\Gamma}|: y\in \aorb(\gamma\mathbb{T} x,T)\}$. In particular, we have $y\in \aorb(\gamma(y)\mathbb{T} x,T)$ and we remark that if $y_n\to y$ then $\liminf \gamma(y_n)\ge \gamma(y)$.\\
Let $M=\sup_{y\in U} \gamma(y)$ and $\varepsilon>0$. There exists $y\in U$ such that $\gamma(y)> M-\varepsilon$. Since there exists a sequence $(n_k)$ such that $\gamma(y)T^{n_k}x\to y$ and $\gamma(y)T^{n_k}x\in U$, we deduce that there exists $n\ge 0$ such that 
\[\gamma(y)T^nx\in U\quad\text{and}\quad\gamma(\gamma(y)T^nx)> M-2\varepsilon.\]
 We now prove that $\Lambda_n$ contains a limit point belonging to $[\frac{M-2\varepsilon}{M},\frac{M}{M-\varepsilon}]$. Since $U$ is a non-empty open set, there exists $\eta>0$ such that the set $\{\lambda'\gamma(y)T^{n}x:1\le \lambda'<1+\eta\}$ is included in $U$. We construct by induction a sequence $(\lambda_k)\subset ]1,1+\eta[$ tending to $1$ and a sequence $(\gamma_k)_k\subset |\overline{\Gamma}|$ such that for every $k\ne j$,
\[\frac{\gamma_k}{\lambda_k \gamma(y)}\in \Lambda_n \quad\text{and}\quad \frac{\gamma_k}{\lambda_k \gamma(y)}\ne \frac{\gamma_j}{\lambda_j \gamma(y)}.\]
Let $\lambda_1\in ]1,1+\eta[$. Since $\lambda_1 \gamma(y)T^{n}x\in U$, there exists $\gamma_1\in |\overline{\Gamma}|$ such that \[\lambda_1 \gamma(y)T^{n}x\in \aorb(\gamma_1\mathbb{T} x,T)\] and we deduce that $\frac{\gamma_1}{\lambda_1 \gamma(y)}\in \Lambda_n$. Assume that $\lambda_1,\cdots,\lambda_k$ have been fixed. We then choose $\lambda_{k+1}\in ]1,1+\frac{\eta}{k+1}[$ such that for every $j\le k$, $\frac{\lambda_{k+1}\gamma_j}{\lambda_j}\notin |\overline{\Gamma}|$. Such a constant $\lambda_{k+1}$ exists because $|\overline{\Gamma}|$ has an empty interior. Therefore, since $\lambda_{k+1}\gamma(y)T^{n}x\in U$, there exists $\gamma_{k+1}\in |\overline{\Gamma}|$ such that \[\lambda_{k+1} \gamma(y)T^{n}x\in \aorb(\gamma_{k+1}\mathbb{T} x,T)\]
and we deduce that $\frac{\gamma_{k+1}}{\lambda_{k+1} \gamma(y)}\in \Lambda_n$ and that for every $j\le k$,
\[\frac{\gamma_{k+1}}{\lambda_{k+1} \gamma(y)}\ne \frac{\gamma_j}{\lambda_j \gamma(y)}\]
since $\frac{\lambda_{k+1}\gamma_j}{\lambda_j}\notin |\overline{\Gamma}|$.
Finally, since $|\overline{\Gamma}|$ is compact and $\lambda_n\to 1$, there exists an increasing sequence $(n_k)$ and $\gamma\in |\overline{\Gamma}|$ such that $\frac{\gamma_{n_k}}{\lambda_{n_k}\gamma(y)}\to \frac{\gamma}{\gamma(y)}$ and thus $\frac{\gamma}{\gamma(y)}\in \Lambda_n$. Moreover, we have $\gamma\ge \gamma(\gamma(y)T^nx)$ and thus $\frac{\gamma}{\gamma(y)}\in [\frac{M-2\varepsilon}{M},\frac{M}{M-\varepsilon}]$. We conclude that 
$\frac{\gamma}{\gamma(y)}$ is a limit point of $\Lambda_n$ belonging to $[\frac{M-2\varepsilon}{M},\frac{M}{M-\varepsilon}]$.\\

In other words, we have proved that for every $\varepsilon>0$, there exists $n\ge 0$ such that $\Lambda_n$ contains a limit point in $]1-\varepsilon,1+\varepsilon[$. In particular, this implies that $1$ is a limit point of $\bigcup_{n}\Lambda_n$. Since each power of an element of $\Lambda_n$ still belongs to $\Lambda_n$, we deduce that
\[\overline{\bigcup_n\Lambda_{n}}\supset [1,+\infty[ \quad\text{or}\quad \overline{\bigcup_n\Lambda_{n}}\supset [0,1].\]
We now show that each of these inclusions implies that $\overline{\text{Orb}}(\mathbb{T} x,T)$ is somewhere dense. We first remark that if we let $U_{\infty}=U\backslash \text{Orb}(\overline{\Gamma}\mathbb{T} x,T)$ then since the interior of $\text{Orb}( \overline{\Gamma}\mathbb{T} x,T)$ is empty, we have $\overline{U}_{\infty}\supset U$. It thus suffices to prove that $\overline{\text{Orb}}(\mathbb{T} x,T)$ contains a nonzero multiple of $U_{\infty}$ in order to conclude.\\
Assume that $\overline{\bigcup_n\Lambda_{n}}\supset [0,1]$ and let $c=\inf |\Gamma|$.
By definition, for every $y\in U_{\infty}$, there exist an increasing sequence $(n_k)$, $\gamma\in |\overline{\Gamma}|$ and $\theta\in [0,2\mathbb{T}]$ such that \[\gamma e^{i\theta}T^{n_k}x\to y.\] Since $\frac{\gamma}{c}\ge 1$ and $\overline{\bigcup_n\Lambda_{n}}\supset [0,1]$, there also exists a sequence $\lambda_{k}\in \Lambda_{n_k}$ such that $\frac{1}{\lambda_{k}}\to \frac{\gamma}{c}$. We then deduce that $\frac{e^{i\theta}}{\lambda_k}T^{n_k}x\to \frac{y}{c}$ and since $\frac{e^{i\theta}}{\lambda_k}T^{n_k}x\in \overline{\text{Orb}}(\mathbb{T} x,T)$, we conclude that $\frac{y}{c}\in \overline{\text{Orb}}(\mathbb{T} x,T)$. If $\overline{\bigcup_n\Lambda_{n}}\supset [1,\infty[$, we get, by applying the same method, that $\overline{\text{Orb}}(\mathbb{T} x,T)\supset \frac{1}{d}U_{\infty}$ where $d=\sup |\Gamma|$. The desired result follows.
\end{proof}

We now show the necessity part. It is enough to build an operator $T$ acting on some Banach space $X$ such that there exists $x\in X$ with $\aorb(\Gamma x,T)$ somewhere dense in $X$ but $\aorb(x,T)$ non dense in $X$.

\begin{proposition}\label{Pprime-nec}Let $\Gamma \subset \C$ be non-empty. We assume that for every complex Banach space $X$, every $T\in L(X)$ and every $x\in X$, if $\aeorb(\Gamma x,T)$ is somewhere dense in $X$, then $x$ is hypercyclic for $T$. Then $\Gamma\subset \mathbb{C}$ is such that $\overline{\Gamma\mathbb{T}}\setminus\{0\}$  is bounded and bounded away from zero with an empty interior.
\end{proposition}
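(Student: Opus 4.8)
The strategy is to show each of the three properties of $\overline{\Gamma\T}\setminus\{0\}$ is forced: whenever one fails, we exhibit an operator $T$ on some complex Banach space and a vector $x$ with $\aorb(\Gamma x,T)$ somewhere dense but $x$ not hypercyclic, contradicting the hypothesis. Since "dense" implies "somewhere dense", the hypothesis already forces that every $\Gamma$-supercyclic vector is hypercyclic. Thus if $\Gamma$ were unbounded, Proposition~\ref{prop-nec-1} would supply a $\Gamma$-supercyclic vector for the (non-hypercyclic) backward shift on $\ell^2(\N)$; and if $\Gamma\setminus\{0\}$ were not bounded away from $0$, Proposition~\ref{prop-nec-2} would do the same with the relevant bilateral weighted shift. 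As $|\overline{\Gamma\T}|=\overline{|\Gamma|}$, this already yields that $\overline{\Gamma\T}\setminus\{0\}$ is bounded and bounded away from $0$.

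It remains to prove that $\overline{\Gamma\T}$ has empty interior. Assume it does not. We may assume $\Gamma$ is bounded and bounded away from $0$ (otherwise the previous paragraph applies); then $\overline{\Gamma\T}\setminus\{0\}$ is compact and $\T$-invariant with a nonzero interior point, hence contains an annulus $\{r_1\le|w|\le r_2\}$ with $0<r_1<r_2$, so $\overline{|\Gamma|}\supset[r_1,r_2]$. Replacing $\Gamma$ by $c\Gamma$ ($c>0$) only replaces $x$ by $c^{-1}x$ in $\aorb(\Gamma x,T)$, so the hypothesis is dilation-invariant and we may assume $\overline{|\Gamma|}\supset[1,b]$ for some $b>1$. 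Now put $X=\ell^2(\N)\oplus\C$ and $T=S\oplus R_\theta$, where $S=2B$ (twice the backward shift) is hypercyclic and topologically mixing on $\ell^2(\N)$, $e^{i\theta}$ is not a root of unity, and $R_\theta z=e^{i\theta}z$ on $\C$. Since $T^n(w,1)=(S^nw,e^{in\theta})$ has last coordinate of modulus $1$, $(w,1)$ is never hypercyclic for $T$; I claim that for a residual set of $w$, $\orb(\Gamma(w,1),T)$ is nevertheless dense in the nonempty open set $\Omega:=\ell^2(\N)\times\{1<|z|<b\}$, hence somewhere dense in $X$.

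This is a Baire-category argument. Fix a countable base $(V_j\times W_j)_j$ of nonempty open boxes of $\Omega$ and let $G_j=\{w:\ \gamma S^nw\in V_j\text{ and }\gamma e^{in\theta}\in W_j\text{ for some }\gamma\in\Gamma,\ n\ge0\}$, an open subset of $\ell^2(\N)$. The crucial point is that each $G_j$ is dense. Given a nonempty open $V'\subset\ell^2(\N)$, choose $v_0\in W_j$ with $1<|v_0|<b$; since $|v_0|\in\overline{|\Gamma|}$ we may pick $\gamma_0\in\Gamma$ with $|\gamma_0|$ so close to $|v_0|$ that some arc $I\subset\T$ of positive length satisfies $\gamma_0 z\in W_j$ for all $z\in I$. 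As $e^{i\theta}$ is not a root of unity, $\{n:\ e^{in\theta}\in I\}$ is infinite, and as $S$ is mixing, $\{n:\ S^n(V')\cap\gamma_0^{-1}V_j\ne\emptyset\}$ is cofinite; picking $n$ in the intersection gives $w\in V'$ with $\gamma_0 S^nw\in V_j$ and $\gamma_0 e^{in\theta}\in W_j$, i.e.\ $w\in G_j\cap V'$. By Baire's theorem $\bigcap_j G_j$ is residual, and any $w$ in it has $\orb(\Gamma(w,1),T)$ meeting every $V_j\times W_j$, hence dense in $\Omega$. Then $\aorb(\Gamma(w,1),T)\supset\overline{\Omega}$ has nonempty interior while $(w,1)$ is not hypercyclic for $T$ — the desired contradiction.

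The main obstacle is precisely the density of the sets $G_j$: because the $\C$-coordinate of the orbit stays on the unit circle, reaching the target annulus $\{1<|z|<b\}$ must be done using the moduli of the scalars in $\Gamma$ (so one needs $\overline{|\Gamma|}\supset[1,b]$) and the phases $e^{in\theta}$ (so one needs $e^{i\theta}$ to generate a dense subgroup of $\T$), while the same index $n$ must simultaneously be good for approximating an arbitrary target on the $\ell^2(\N)$-coordinate; this forces the use of an operator $S$ whose return-time sets are large enough to meet every infinite set, i.e.\ a topologically mixing one such as $2B$.
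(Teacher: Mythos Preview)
Your argument is correct and follows the same architectural idea as the paper: for the ``empty interior'' part one builds $T=(\text{hypercyclic operator})\oplus(\text{irrational rotation on }\C)$ and shows that a suitable vector has a $\Gamma$-orbit filling an open set of the form $(\text{open in the first factor})\times(\text{annulus coming from }\overline{\Gamma\T})$, while failing to be hypercyclic because the second coordinate stays on the unit circle. The paper uses the same shape $T=e^{i\theta}\oplus\tilde T$ but, instead of your Baire/mixing computation, simply invokes \cite[Theorems~2.1 and~2.2(b)]{Berbonper} to obtain an $\R^+$-supercyclic vector $(1,y)$ for such an operator and then deduces directly from $\R^+$-supercyclicity that $\aorb(\Gamma(1,y),T)\supset\overline{\Gamma\T}\times Y$.

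The practical difference is that your version is self-contained (you do not need the Berm\'udez--Bonilla--Peris reference), at the cost of carrying out the Baire argument and needing the first factor to be topologically mixing rather than merely hypercyclic; the paper's version is shorter but relies on the external $\R^+$-supercyclicity result. The normalisations you make (reducing to $\overline{|\Gamma|}\supset[1,b]$ via $\T$-invariance and dilation-invariance of the hypothesis) are fine and could equally be avoided by targeting the annulus $\{r_1<|z|<r_2\}$ directly.
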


\begin{proof}
If $\Gamma\setminus\{0\}$ is not bounded or not bounded away from 0 then counterexamples are given in Section 2.
Let then $\Gamma\setminus\{0\}$ be a bounded, bounded away from $0$ subset of $\C$ such that the interior of $\overline{\Gamma \T}$ is non-empty.
By \cite[Theorem 2.1 and Theorem 2.2.(b)]{Berbonper}, there exists an $\R^{+}$-supercyclic operator $T=e^{i\theta}\oplus \tilde{T}$ acting on a Banach space $X=\C\oplus Y$ with $\R^+$-supercyclic vector $(1,y)$. Clearly $(1,y)$ is not hypercyclic for $T$.
Let $V$ be a non-empty open subset in $Y$ and $U\subset \overline{\Gamma\T}\times V$ nonempty and open. We intend to prove that $U$ is in the interior of $\aorb(\Gamma (1,y),T)$.\\
Let $(a,x)\in U$. Since $a\in\overline{\Gamma\T}$, there exists $(\gamma_k)_{k\in\N}\subset \Gamma$ and $\mu\in[0,2\pi[$ such that $\gamma_k\to \vert a\vert e^{i\mu}$.
Moreover, since $(1,y)$ is $\R^+$-supercyclic for $T$, there also exist an increasing sequence $(n_k)_{k\in\N}$ and $(\lambda_{k})_{k\in\N}\subset\R^+$ such that
$$\lambda_k e^{i\theta n_k}\to a e^{-i\mu}\text{ et }\lambda_k \tilde{T}^{n_k}y\to x e^{-i\mu}.$$
From this we deduce that $\lambda_k\to \vert a\vert$ and thus
$$e^{i\theta n_k}\to \frac{a}{\vert a\vert e^{i\mu}} \text{ et }\tilde{T}^{n_k}y\to \frac{x}{\vert a\vert e^{i\mu}}.$$
Finally, reintroducing $\gamma_k$, we obtain 
$$\gamma_k e^{i\theta n_k}\to a \text{ et }\gamma_k \tilde{T}^{n_k}y\to x.$$
Hence $\gamma_k T^{n_k}(1,y)\to(a,x)$ which had to be shown.
\end{proof}

Theorem~B follows from the combination of Theorem \ref{Pprime-suff} and Proposition \ref{Pprime-nec}.

\section{Proof of Theorem~C}

In a matter of convenience we recall Theorem~C.

\begin{theoC}Let $X$ be a complex Banach space.
\begin{enumerate}
\item For every $\theta\in\R$, $\Gamma \subset \C$ satisfies the property: "For every $T\in L(X)$ with $\sigma_p(T^*)=\{e^{i\theta}\}$ and every $x\in X$
$$x\text{ is }\Gamma\text{-supercyclic for }T\text{ if and only if }x\text{ is supercyclic for }T\text{"}$$
if and only if $\Gamma G_{\theta}$ is dense in $\C$, where $G_{\theta}$ stands for the subgroup of $\T$ generated by $e^{i\theta}$.
\item For any $r\neq 1$ and any $\theta\in\R$, there exist $T\in L(X)$ with $\sigma_p(T^*)=\{re^{i\theta}\}$ and $\Gamma \subset \C$ satisfying $\overline{\Gamma G_{\theta}}=\C$, such that $T$ is supercyclic but not $\Gamma$-supercyclic.
\end{enumerate}
\end{theoC}

For the proof of (1) we will use the following lemma which is reminiscent from Shkarin's proof of \cite[Proposition 5.1]{Shkauniv} when $\theta$ is such that $\overline{G_{\theta}}=\T$. The proof for $\theta\in\R$ works along the same lines.
\begin{lemma}\label{shkarin-remi}
If $T$ is a supercyclic operator with $\sigma_p(T^*)=\{e^{i\theta}\}$ where $\theta\in\R$ then there exists $f\in X^*$ such that $\eorb(x,T)$ is dense in $\{y\in X: f(y)\in G_{\theta}\}$.
\end{lemma}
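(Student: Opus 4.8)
The plan is to extract from supercyclicity of $T$ a specific hypercyclic-type statement on the level set $\{f\in G_\theta\}$, where $f$ is the eigenvector of $T^*$ dictated by the hypothesis $\sigma_p(T^*)=\{e^{i\theta}\}$. Since $\sigma_p(T^*)=\{e^{i\theta}\}$, fix $f\in X^*\setminus\{0\}$ with $T^*f=e^{i\theta}f$. Then for every $z\in X$ and every $n\ge 0$ we have $f(T^nz)=e^{in\theta}f(z)$, so the orbit of any vector $z$ with $f(z)\ne 0$ stays on the ``spiral'' $\{e^{in\theta}f(z):n\ge0\}\subset f(z)G_\theta$; in particular, if $z$ is such that $f(z)\in G_\theta$ then $\orb(z,T)\subset\{y:f(y)\in G_\theta\}$, which shows the desired set is $T$-invariant and contains the orbit. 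The substance of the lemma is the reverse: $\orb(x,T)$ is \emph{dense} in $\{y:f(y)\in G_\theta\}$ for a well-chosen supercyclic vector $x$.

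The key steps, in order, are as follows. First I would take a supercyclic vector $u$ for $T$; since $\langle u,f\rangle$ cannot be $0$ (otherwise $\orb(\C u,T)\subset\Ker f$, not dense), after scaling we may assume $f(u)=1$, so that $f(T^nu)=e^{in\theta}$. Second, I would use supercyclicity to approximate an arbitrary target: given $y$ with $f(y)\in G_\theta$, say $f(y)=e^{ik\theta}$ for some $k\in\Z$ (or the appropriate description of $G_\theta$ when $\theta\in\pi\Q$), and given $\varepsilon>0$, pick a scalar $\alpha\in\C$ and $n\ge0$ with $\|\alpha T^nu-y\|<\varepsilon$. Applying $f$ gives $\alpha e^{in\theta}\approx f(y)=e^{ik\theta}$, hence $\alpha\approx e^{i(k-n)\theta}$, so $\alpha$ is forced to be close to a point of $G_\theta$. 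Third — and this is where one must be slightly careful — I would replace the scalar $\alpha$ by an honest element $e^{im\theta}\in G_\theta$: write $\alpha=e^{im\theta}(1+\delta)$ with $m$ chosen so $e^{im\theta}$ approximates $\alpha/|\alpha|\cdot(\text{correction})$ and $\delta$ small, which is possible because $G_\theta$ is dense in $\T$ when $\overline{G_\theta}=\T$, and trivial (finite group, exact) when $\theta\in\pi\Q$; then $e^{im\theta}T^nu=T^{?}(\cdots)$ — more precisely, since $f(T^{n}u)=e^{in\theta}$, the vector $e^{im\theta}T^nu$ has $f$-value in $G_\theta$, and in the case $\overline{G_\theta}=\T$ we approximate $e^{im\theta}T^nu$ itself by a genuine orbit element $T^{n+j}u$ with $e^{ij\theta}\approx e^{im\theta}$ using density of $\{e^{ij\theta}:j\ge0\}$ in $\T$. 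Combining, $T^{n+j}u\approx\alpha T^nu\approx y$ up to an error controlled by $\varepsilon$, $\delta$ and $\|T^nu\|$; a standard two-$\varepsilon$ bookkeeping then gives a genuine orbit point of $u$ within $2\varepsilon$ (say) of $y$. Taking $x=u$ finishes the proof.

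The main obstacle is the passage from the \emph{scalar} multiple $\alpha T^nu$ produced by supercyclicity to a \emph{pure iterate} $T^{n+j}u$: one needs that multiplying an iterate $T^nu$ by the residual unimodular factor $\alpha/|\alpha|$ (which the $f$-computation pins down to be near $G_\theta$) can again be absorbed as further iteration. When $\overline{G_\theta}=\T$ this works because $\{e^{ij\theta}:j\in\N\}$ is dense in $\T$, so $e^{ij\theta}T^nu$ can be pushed arbitrarily close to $e^{i\beta}T^nu$ for any target phase $\beta$; when $\theta\in\pi\Q$, $G_\theta$ is finite and one instead observes that the level set $\{y:f(y)\in G_\theta\}$ is exactly the finite union $\bigcup_{m}\{y:f(y)=e^{im\theta}\}=\bigcup_m e^{im\theta}\Ker(f-1)$-type slices and that the orbit visits each slice cofinally, so the same approximation goes through slice by slice. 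One also has to handle the magnitude $|\alpha|$: the $f$-estimate forces $|\alpha|\to1$ as $\varepsilon\to0$, so the perturbation $(|\alpha|-1)T^nu$ is negligible only after noting $\|T^nu\|$ is controlled along the approximating subsequence — this is automatic since $\alpha T^nu\to y$ and $\alpha$ is bounded away from $0$, hence $(T^nu)$ is bounded along that subsequence. Modulo these routine estimates, the argument is exactly Shkarin's for $\overline{G_\theta}=\T$, and I would simply remark that it carries over verbatim to general $\theta\in\R$ with $G_\theta$ in place of $\T$.
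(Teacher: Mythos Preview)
The paper does not supply a proof of this lemma; it simply records that the statement is ``reminiscent from Shkarin's proof of \cite[Proposition~5.1]{Shkauniv}'' in the irrational case and that ``the proof for $\theta\in\R$ works along the same lines.''  So the comparison is really between your sketch and Shkarin's argument.

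Your reduction is correct and useful: pick $f\in X^*\setminus\{0\}$ with $T^*f=e^{i\theta}f$, normalise a supercyclic vector so that $f(u)=1$, and observe that $f(T^nu)=e^{in\theta}$, so that any supercyclic approximation $\alpha_jT^{n_j}u\to y$ with $f(y)=e^{ik\theta}$ forces $|\alpha_j|\to 1$ and $\alpha_j-e^{i(k-n_j)\theta}\to 0$.  The rational case then goes through essentially as you indicate, once one invokes Ansari's theorem to guarantee that $T^{r}u$ is supercyclic for $T^q$ (with $q=|G_\theta|$), so that the approximating sequence can be taken with $n_j\equiv r\pmod q$; you gloss over Ansari but the idea is right.

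The genuine gap is in the irrational case.  Your step ``approximate $e^{im\theta}T^nu$ itself by a genuine orbit element $T^{n+j}u$ with $e^{ij\theta}\approx e^{im\theta}$'' does not work: $T^{n+j}u=T^j(T^nu)$, and $T^j$ is \emph{not} multiplication by $e^{ij\theta}$ on $X$ --- it only acts that way on the scalar $f$-value.  There is no reason whatsoever for $T^{n+j}u$ to be close to $e^{ij\theta}T^nu$ as vectors.  What your computation actually proves is that $T^{n_j}u-e^{i(n_j-k)\theta}y\to 0$, i.e.\ the orbit approaches the $\T$-orbit $\T y$, so that $\T\cdot\orb(u,T)$ is dense in $\{|f|=1\}$.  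Passing from ``$\T\cdot\orb(u,T)$ dense'' to ``$\orb(u,T)$ dense'' is precisely the non-trivial content of the Le\'on--M\"uller/Shkarin machinery: one must show that the closed set $\aorb(u,T)$ is $\T$-invariant, and this is where Shkarin's group-theoretic argument (compact group acting and commuting with $T$, applied here to the $\T$-action by scalar multiplication on the level set $\{|f|=1\}$) is needed.  Your sketch does not contain this step, and the attempted replacement by ``further iteration absorbs the phase'' is circular --- it assumes exactly the density you are trying to prove.
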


\begin{proof}[Proof of Theorem~C](1) Let $\theta\in\R$ and assume that $\Gamma G_{\theta}$ is dense in $\C$. If $x$ is a supercyclic vector for an operator $T$ with $\sigma_p(T^*)=\{e^{i\theta}\}$ then we know thanks to Lemma \ref{shkarin-remi} that there exists $f\in X^*$ such that $\orb(x,T)$ is dense in $\{y\in X: f(y)\in G_{\theta}\}$. Since $\overline{\Gamma G_{\theta}}=\C$, $\Gamma\{y\in X:  f(y)\in G_{\theta}\}$ is dense in $X$ and thus $\Gamma\orb(x,T)$ is also dense in $X$. This proves the sufficiency.

For the necessity part,  let $\theta\in\R$ and assume that $\Gamma G_{\theta}$ is not dense in $\C$. Let us consider the operator $R:=e^{-i\theta}Id$ on $\C$. Then, $\sigma_p(R^*)=\{e^{i\theta}\}$ and it is clear that $1$ is supercyclic for $R$ while $\Gamma\orb(1,R)\subset\Gamma G_{-\theta}$ is not dense in $\C$. Indeed observe that $G_{-\theta}=G_{\theta}$ when $\theta\in \pi \Q$, and that $\overline{ G_{-\theta}}=\overline{G_{\theta}}=\T$ when $\theta\in \pi(\R \setminus\Q)$. Now we decompose $X$ as a sum $X= \C \oplus Y$ and consider $T:=R \oplus \tilde{T}$ where $\tilde{T}: Y\to Y$ satisfies Kitai Criterion (or the Hypercyclicity Criterion along the whole sequence of integers). Then, it is clear that $\sigma_p(T^*)=\{e^{i\theta}\}$ and that $T$ is not $\Gamma$-supercyclic. Moreover, if $G_{\theta}$ is not dense in $\T$ then it is not difficult to check that $T$ is supercyclic thanks to Ansari Theorem \cite{Ansarihyp} for example. Finally, if $G_{\theta}$ is dense in $\T$ then the proof of the supercyclicity of $T$ is similar to the proof of \cite[Theorem 2.2.(b)]{Berbonper}.

(2) Let $r>0$ with  $r\neq1$ and $\theta\in\R$. Up to write $X=\C \oplus Y$ and consider $T=R\oplus \tilde{T}$ with $\tilde{T}$ satisfying Kitai Criterion, one can assume that $X=\C$ and we only have to exhibit $\Gamma \subset \C$ satisfying $\overline{\Gamma G_{\theta}}=\C$ and $R:\C \to \C$ supercyclic but not $\Gamma$-supercyclic, with $\sigma_p(R^*)=\{re^{i\theta}\}$ (see \cite[Theorem 2.2.(b)]{Berbonper} as previously). Then consider the operator $R:=re^{-i\theta}Id$ acting on $\C$ and set $\Gamma=\{r^te^{-it\theta};\,t\in \R\}$.
We first remark that $1$ is supercyclic for $R$ and $\sigma_p(R^*)=\{re^{i\theta}\}$. By contradiction we assume that $R$ is $\Gamma$-supercyclic. In this case, there exist a non-decreasing sequence $(n_k)_{k\in\N}$ of integers and $(\gamma_k)_{k\in\N}\subset\Gamma$ such that
$$\gamma_k r^{n_k}e^{-i\theta n_k}\to -1.$$
Writing $\gamma_k=r^{t_k}e^{-it_k\theta}$ for some $(t_k)_{k\in \N}\subset \R$, we deduce that 
$$e^{-i\theta (t_k+n_k)}\to -1\text{ and }r^{t_k+n_k}\to 1.$$
It follows that $t_k+n_k\to 0$ and $-1=\lim_{k\to\infty}e^{-i\theta (t_k+n_k)}=1$, a contradiction.

\end{proof}

\bibliographystyle{plain}
\bibliography{biblio}

\end{document}